\newtheorem{Pro}{Proposition}
\newtheorem{Lem}[Pro]{Lemma}
\newtheorem{theorem}[Pro]{Theorem}
\newtheorem{Cor}[Pro]{Corollary}
\newtheorem*{Property}{Property}
\newtheorem*{Pb}{Problem}
\theoremstyle{definition}
\newtheorem{Def}[Pro]{Definition}
\theoremstyle{remark}
\newtheorem{Conv}[Pro]{Convention}
\newtheorem{Exm}[Pro]{Example}
\newtheorem{Rem}[Pro]{Remark}
\newtheorem{Not}[Pro]{Notations}
\newtheorem*{commentVF}{VF's comment} 
\def\SL{{\mathrm{SL}}}
\def\St{St_r}
\let\si\sigma
\DeclareMathOperator{\Spec}{Spec}
\def\sl{{\mathfrak {sl}}}
\def\con{{\mathfrak c}}
\def\p{{\mathfrak p}}
\def\q{{\mathfrak q}}
\def\rad{{\mathfrak r}}
\DeclareMathOperator{\Frac}{Frac}
\DeclareMathOperator{\Proj}{Proj}
\def\F{{\mathbb F}}
\def\N{{\mathbb N}}
\def\Z{{\mathbb Z}}
\def\Q{{\mathbb Q}}
\def\P{{\mathbb P}}
\def\G{{\mathbb G}}
\def\k{{\mathbf k}}
\def\L{{\mathcal L}}
\def\I{{\mathfrak I}}
\def\M{{\mathfrak M}}
\DeclareMathOperator{\hgt}{ht}
\let\x\times
\let\ot\otimes
\DeclareMathOperator{\hull}{hull}
\def\even{{\mathrm{even}}}
\def\tors{{\mathrm{tors}}}
\def\pos{{\mathrm{pos}}}
\DeclareMathOperator{\gr}{gr}
\def \ind{{\mathrm{ind}}}
\def \ker{{\mathrm{Ker}}}
\def \hom{{\mathrm{Hom}}}
\def \ext{{\mathrm{Ext}}}
\def \h{{\mathrm{H}}}
\def \id{{\mathrm{id}}}
\def \triv{{{\mathrm{triv}}}}
\def\YEAR{\year}\newcount\VOL\VOL=\YEAR\advance\VOL by-1995
\def\XVOL{$\cdot$ Extra Volume Suslin}
\def\firstpage{171}\def\lastpage{195}
\def\received{November 27, 2008}\def\revised{September 29, 2009}
\def\communicated{}
\def\magnification{\afterassignment\m@g\count@}
\def\m@g{\mag=\count@\hsize6.5truein\vsize8.9truein\dimen\footins8truein}
\font\eightrm=cmr8
\font\caps=cmcsc10                    
\font\Caps=cmcsc10 scaled \magstep1   
\font\scaps=cmcsc8
\def\DocMath{{\def\th{\thinspace}\scaps Documenta Math.}}
\renewcommand{\@oddfoot}{\hfill\scaps Documenta Mathematica 
    \XVOL\  (\number\YEAR) \number\firstpage--\lastpage\hfill}
\renewcommand{\@evenfoot}{\ifnum\thepage>\lastpage\hfill\scaps
    Documenta Mathematica \XVOL\  (\number\YEAR)\hfill\else\@oddfoot\fi}%
\renewcommand{\@evenhead}{%
    \ifnum\thepage>\lastpage\rlap{\thepage}\hfill%
    \else\rlap{\thepage}\slshape\leftmark\hfill{\caps\SAuthor}\hfill\fi}%
\renewcommand{\@oddhead}{%
    \ifnum\thepage=\firstpage{\DocMath\hfill\llap{\thepage}}%
    \else{\slshape\rightmark}\hfill{\caps\STitle}\hfill\llap{\thepage}\fi}%
\def\TSkip{\bigskip}
\newbox\TheTitle{\obeylines\gdef\GetTitle #1
\ShortTitle  #2
\SubTitle    #3
\Author      #4
\ShortAuthor #5
\EndTitle
{\setbox\TheTitle=\vbox{\baselineskip=20pt\let\par=\cr\obeylines%
\halign{\centerline{\Caps##}\cr\noalign{\medskip}\cr#1\cr}}%
	\copy\TheTitle\TSkip\TSkip%
\def\next{#2}\ifx\next\empty\gdef\STitle{#1}\else\gdef\STitle{#2}\fi%
\def\next{#3}\ifx\next\empty%
    \else\setbox\TheTitle=\vbox{\baselineskip=20pt\let\par=\cr\obeylines%
    \halign{\centerline{\caps##} #3\cr}}\copy\TheTitle\TSkip\TSkip\fi%
\centerline{\caps #4}\TSkip\TSkip%
\def\next{#5}\ifx\next\empty\gdef\SAuthor{#4}\else\gdef\SAuthor{#5}\fi%
\ifx\received\empty\relax
    \else\centerline{\eightrm Received: \received}\fi%
\ifx\revised\empty\TSkip%
    \else\centerline{\eightrm Revised: \revised}\TSkip\fi%
\ifx\communicated\empty\relax
    \else\centerline{\eightrm Communicated by \communicated}\fi\TSkip\TSkip%
\catcode'015=5}}\def\Title{\obeylines\GetTitle}
\def\Abstract{\begingroup\narrower
    \parskip=\medskipamount\parindent=0pt{\caps Abstract. }}
\def\EndAbstract{\par\endgroup\TSkip}
\long\def\MSC#1\EndMSC{\def\arg{#1}\ifx\arg\empty\relax\else
     {\par\narrower\noindent%
     2010 Mathematics Subject Classification: #1\par}\fi}
\long\def\KEY#1\EndKEY{\def\arg{#1}\ifx\arg\empty\relax\else
	{\par\narrower\noindent Keywords and Phrases: #1\par}\fi\TSkip}
\newbox\TheAdd\def\Addresses{\vfill\copy\TheAdd\vfill
    \ifodd\number\lastpage\vfill\eject\phantom{.}\vfill\eject\fi}
{\obeylines\gdef\GetAddress #1
\Address #2 
\Address #3
\Address #4
\EndAddress
{\def\xs{5.3truecm}\parindent=0pt
\setbox0=\vtop{{\obeylines\hsize=\xs#1\par}}\def\next{#2}
\ifx\next\empty 
     \setbox\TheAdd=\hbox to\hsize{\hfill\copy0\hfill}
\else\setbox1=\vtop{{\obeylines\hsize=\xs#2\par}}\def\next{#3}
\ifx\next\empty 
     \setbox\TheAdd=\hbox to\hsize{\hfill\copy0\hfill\copy1\hfill}
\else\setbox2=\vtop{{\obeylines\hsize=\xs#3\par}}\def\next{#4}
\ifx\next\empty\ 
     \setbox\TheAdd=\vtop{\hbox to\hsize{\hfill\copy0\hfill\copy1\hfill}
                \vskip20pt\hbox to\hsize{\hfill\copy2\hfill}}
\else\setbox3=\vtop{{\obeylines\hsize=\xs#4\par}}
     \setbox\TheAdd=\vtop{\hbox to\hsize{\hfill\copy0\hfill\copy1\hfill}
	        \vskip20pt\hbox to\hsize{\hfill\copy2\hfill\copy3\hfill}}
\fi\fi\fi\catcode'015=5}}\gdef\Address{\obeylines\GetAddress}
\begin{document}
\Title
Power Reductivity over an Arbitrary Base
\ShortTitle 
\SubTitle
\`A Andr\'e Sousline pour son soixanti\`eme anniversaire
Voor Andree Soeslin op zijn zestigste verjaardag
\Author 
Vincent Franjou$^1$ and Wilberd van der Kallen$^2$
\ShortAuthor 
V. Franjou and W. van der Kallen
\EndTitle
\Abstract 
Our starting point is Mumford's conjecture, on representations of Chevalley groups over fields, as it is phrased in the preface of 
\emph{Geometric Invariant Theory}.  After  extending the conjecture appropriately, we show that it holds over an arbitrary commutative base ring. 
We thus obtain the first fundamental theorem of invariant theory (often referred to as Hilbert's fourteenth problem) 
over an arbitrary Noetherian ring.
We also prove results on the Grosshans graded deformation of an algebra in the same generality. 
We end with tentative finiteness results for rational cohomology over the integers.
\EndAbstract
\MSC 20G35; 20G05; 20G10
\EndMSC
\KEY Chevalley group; Hilbert's 14th; 
cohomology; Geometric reductivity.
\EndKEY
\Address 
Vincent Franjou
Math\'ematiques
UFR Sciences \& Techniques
BP 92208
F-44322 Nantes Cedex 3
Vincent.Lastname@univ-nantes.fr
\Address
Wilberd van der Kallen
Mathematisch Instituut
Universiteit Utrecht
P.O. Box 80.010
NL-3508 TA Utrecht
Initial.vanderKallen@uu.nl
\Address
\Address
\EndAddress

\footnotetext[1]{LMJL - Laboratoire de Math\'ematiques Jean Leray, CNRS/Universit\'e de
Nantes.
The author acknowledges the hospitality and support of CRM Barcelona
during the tuning of the paper.}
\footnotetext[2]{The project started in Nantes, the author being the first visitor in the MATPYL program of CNRS' FR 2962 ``Math\'ematiques des Pays de Loire''. }




\section{Introduction}
The following statement may seem quite well known:
\begin{theorem}\label{Bob:theorem}
Let $\k$ be a Dedekind ring and let $G$ be a Chevalley group scheme over $\k$.
Let $A$ be a finitely generated commutative $\k$-algebra on which $G$ acts rationally through algebra automorphisms.
The subring of invariants $A^G$ is then a finitely generated $\k$-algebra.
\end{theorem}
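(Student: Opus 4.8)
The plan is to deduce the theorem from \emph{power reductivity} of the Chevalley group scheme $G$ over $\k$, in the form: for every surjection $M\twoheadrightarrow\k$ of rational $G$-modules there is an integer $d\geq1$ such that the induced map $(\Gamma^dM)^G\to(\Gamma^d\k)^G=\k$ is again surjective. (Divided powers $\Gamma^d$ are used rather than symmetric powers $S^d$ because they behave well over a base ring; $S^d$ could be substituted with minor adjustments.) Establishing this \emph{extended Mumford conjecture} over $\k$ is, I expect, by far the main obstacle. Since a Chevalley group scheme is obtained by base change from $\Z$, and the formations of $\Gamma^d$ and of invariants are compatible with the base changes that occur, one should reduce --- with some work --- to the case in which the base is a field, where the classical Mumford conjecture applies: Haboush's theorem in positive characteristic and linear reductivity in characteristic zero, with a Frobenius-kernel analysis to pass between symmetric and divided powers. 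Everything afterwards is commutative algebra in the tradition of Nagata.

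Granting power reductivity, the first step is the \emph{transfer principle}: for a $G$-equivariant surjection $\pi\colon A\twoheadrightarrow B$ of $G$-algebras and any $b\in B^G$, one has $b^d\in\pi(A^G)$ for some $d\geq1$. To prove it, lift $b$ to an element of $A$ lying in a $\k$-finite $G$-submodule $A'$ --- possible since the action is rational and $\k$ is Noetherian --- so that $L:=A'\cap\pi^{-1}(\k b)$ is a $\k$-finite $G$-submodule of $A$ mapping onto $\k b\cong\k$. Power reductivity yields an invariant $\psi\in(\Gamma^dL)^G$ lifting $b^{\otimes d}$, and the image of $\psi$ under the $G$-equivariant composite $\Gamma^dL\to\Gamma^dA\to A$, whose second arrow is the restriction to symmetric tensors of the multiplication $A^{\otimes d}\to A$, is an element of $A^G$ whose image in $B$ equals $b^d$.

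Finite generation then follows by induction on Krull dimension, by Nagata's method, with the transfer principle substituting for the Reynolds operator available only in characteristic zero. By d\'evissage --- passing to $A_{\mathrm{red}}$ and then to the quotients by the minimal primes, each $G$-stable because a Chevalley group scheme is connected and hence acts trivially on the finite set of minimal primes, and then homogenizing a presentation of the resulting domain by one extra $G$-trivial variable --- one reduces to an $\N$-graded domain $A$ whose degree-zero part $A_0$ is $\k$ or a residue field of $\k$, with $G$ acting trivially on $A_0$; in each reduction step finite generation is inherited by combining the transfer principle with the Artin--Tate lemma (for the passage to $A_{\mathrm{red}}$ one also uses the companion statement, proved the same way, that modules of invariants are finitely generated). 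If $A$ has no invariants of positive degree, then $A^G=A_0$, which is finitely generated. Otherwise choose a nonzero homogeneous $a\in A^G$ of positive degree; it is a nonzerodivisor and not a unit, so $\dim A/aA<\dim A$ and $(A/aA)^G$ is finitely generated by induction. By the transfer principle $(A/aA)^G$ is integral over the image of $A^G$, which image equals $A^G/aA^G$ because $a$ is an invariant nonzerodivisor; the Artin--Tate lemma then gives that $A^G/aA^G$ is a finitely generated $\k$-algebra. Lifting finitely many homogeneous algebra generators of $A^G/aA^G$ through $A^G\twoheadrightarrow A^G/aA^G$ and adjoining $a$, a short degree induction shows these generate $A^G$.

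So the whole difficulty sits in power reductivity of $G$ over $\k$. The points where a base ring rather than a base field intervenes --- $\k$-torsion in $A$ (so that $A$ need not be $\k$-flat), compatibility of $\Gamma^d$ and of invariants with the base changes $\k\to\k/\p$ and $\k\to\Frac\k$, and the use of connectedness of $G$ in the d\'evissage --- are tractable precisely because $\k$ is Dedekind, hence Noetherian and regular of dimension at most one.
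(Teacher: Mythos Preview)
Your architecture matches the paper's: establish power reductivity of $G$, derive from it that invariants lift modulo an ideal up to taking powers (your ``transfer principle'' is the paper's Proposition~\ref{power:Pro}), and then run Nagata's argument. The paper simply cites Springer \cite[p.~23--26]{Sp} for the last step; your explicit d\'evissage is in the same spirit and is fine.

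The genuine gap is power reductivity itself. You correctly flag it as the main obstacle but do not prove it; ``reduce --- with some work --- to the case in which the base is a field'' is the right instinct, but the work is the actual content of the paper (Section~\ref{Mumford:section}), and Haboush's theorem alone does not lift to a ring. The paper's route is: localize so that $\k$ is local with residue characteristic $p$; pass to the simply connected cover; choose a Steinberg-type module $\St=\nabla_{(p^r-1)\rho}$ with $r$ large relative to the weights of $M$; and prove $\ext^n_G\big(\hom_\k(\St,\St)^\#,\ker\varphi\big)=0$ for all $n>0$ by a universal-coefficient argument that bootstraps from the field case through $\Z_{(p)}$, using a Weyl filtration on $\ker\varphi$. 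The determinant in $\big(S^d\hom_\k(\St,\St)^\#\big)^G$, with $d=\mathrm{rk}_\k\St$, then lifts to the required invariant. None of this uses that $\k$ is Dedekind --- the paper obtains the result over an arbitrary Noetherian base (Theorem~\ref{FFT:theorem}) --- so your appeal to regularity of dimension $\le 1$ is a symptom of not having the argument, not a genuine feature of the proof.

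One smaller point. Your power reductivity treats only surjections $M\twoheadrightarrow\k$, whereas the paper's Definition~\ref{power reductive:Def} allows any cyclic target $L$. This bites in your transfer principle, where you assert $\k b\cong\k$: even after your d\'evissage to a graded domain $A$ with $A_0=\k$, an invariant $b\in(A/aA)^G$ can be $\k$-torsion (e.g.\ $\k=\Z$, $A=\Z[x]$ with trivial action, $a=2x$, $b=\bar x$). The repair is to prove power reductivity over \emph{every} commutative base and then base-change along $\k\to\k/\mathrm{Ann}_\k(b)$, but that is precisely the strengthening the paper carries out.
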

Indeed, R. Thomason proved \cite[Theorem 3.8]{T} the statement for any Noetherian Nagata ring $\k$.
Thomason's paper deals with quite a different theme, that is the existence of equivariant resolutions by free modules.
Thomason proves that equivariant sheaves can be resolved by equivariant vector bundles.
He thus solves a conjecture of Seshadri \cite[question 2 p.268]{Se}.
The affirmative answer to Seshadri's question is explained to yield Theorem \ref{Bob:theorem} in the same 
paper \cite[Theorem 2 p.263]{Se}.
The finesse only illustrates that the definition of geometric reductivity in \cite{Se} does not suit well an arbitrary base.
Indeed, Seshadri does not follow the formulation in Mumford's book's introduction \cite[Preface]{M}, 
and uses polynomials instead \cite[Theorem 1 p.244]{Se}. 
This use of a dual in the formulation seems to be why one
requires Thomason's result \cite[Corollary 3.7]{T}.
One can rather go back to the original formulation in terms of symmetric powers as illustrated by the following:
\begin{Def}\label{power reductive:Def}
Let $\k$ be a ring and let $G$ be an algebraic group over $\k$.
The group $G$ is \emph{power-reductive} over $\k$ if the following holds. 
\begin{Property}[Power reductivity]
Let $L$ be a cyclic $\k$-module with trivial $G$-action. 
Let $M$ be a rational $G$-module, and let $\varphi$ be a $G$-module map from $M$ onto $L$.
Then there is a positive integer $d$ such that the $d$-th symmetric power of $\varphi$ induces a surjection:
\[
(S^d M)^G\to S^d L .
\]
\end{Property}
\end{Def}
We show in Section \ref{Mumford:section} that power-reductivity holds for Chevalley group schemes $G$, without assumption on the commutative ring $\k$. 
Note that this version of reductivity is exactly what is needed in Nagata's treatment of finite generation of invariants. We thus obtain:
\begin{theorem}\label{FFT:theorem}
Let $\k$ be a Noetherian ring and let $G$ be a Chevalley group scheme over $\k$.
Let $A$ be a finitely generated commutative $\k$-algebra on which $G$ acts rationally through algebra automorphisms.
The subring of invariants $A^G$ is then a finitely generated $\k$-algebra.
\end{theorem}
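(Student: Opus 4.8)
The plan is to reproduce Nagata's derivation of finite generation of invariants, using power-reductivity of $G$ — which, by the results of Section~\ref{Mumford:section}, holds for Chevalley group schemes over any commutative ring, in particular over the Noetherian ring $\k$ — as the sole substitute for linear reductivity. Definition~\ref{power reductive:Def} is tailored so as to be exactly the hypothesis that Nagata's argument rests on, so the real content of the proof is to check that that argument survives over an arbitrary Noetherian base.

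First I would reduce to a graded situation. Since every rational $G$-module is the union of its $G$-stable $\k$-submodules that are finitely generated over $\k$, one can choose such a submodule $V\subseteq A$ which in addition generates $A$ as a $\k$-algebra; this produces a $G$-equivariant surjection of $\k$-algebras $\pi\colon B\to A$ from the symmetric algebra $B=\bigoplus_{d\ge 0}S^dV$. Because $\k$ is Noetherian, $B$ is a Noetherian $\k$-algebra; it is graded, each $B_d=S^dV$ is a finitely generated $\k$-module, $B_0=\k$, and $G$ acts by graded automorphisms, trivially on $B_0$. The theorem then splits into two tasks: (i) showing $B^G$ is a finitely generated $\k$-algebra, and (ii) transferring finiteness across $\pi$ to $A^G$.

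For (i) I would run the Hilbert-style argument. Let $\mathfrak a\subseteq B$ be the ideal generated by the positive-degree invariants $B_+\cap B^G$; it is $G$-stable and homogeneous, hence, by Noetherianity of $B$, generated by finitely many homogeneous invariants $f_1,\dots,f_m\in B^G$ of degrees $d_i>0$. With $C=\k[f_1,\dots,f_m]\subseteq B^G$, the aim is $C=B^G$, or more modestly that $B^G$ is generated over $\k$ in bounded degree (each $B^G_d$ being a finitely generated $\k$-module). In the inductive step one is handed a homogeneous invariant $f$ with $f=\sum_i f_i h_i$, $h_i\in B_{d-d_i}$, and must ``make the $h_i$ invariant''. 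With no Reynolds operator available, one instead considers the $G$-module surjection $\theta\colon\bigoplus_i B_{d-d_i}\twoheadrightarrow B_d$, $(h_i)_i\mapsto\sum_i f_i h_i$, sets $L=\k f$ (a cyclic $\k$-module with trivial $G$-action, because $f$ is invariant) and $M=\theta^{-1}(L)$, and applies power-reductivity to $\theta|_M\colon M\twoheadrightarrow L$. This yields $e\ge 1$ and an invariant $\xi\in(S^eM)^G$ mapping onto a generator of $S^eL$; decomposing $\xi$ through the $G$-stable splitting $S^e\bigl(\bigoplus_i B_{d-d_i}\bigr)=\bigoplus_{|\beta|=e}\bigotimes_i S^{\beta_i}B_{d-d_i}$ and pushing through the multiplication $S^eB_d\to B_{ed}$ produces an identity $f^e=\sum_{|\beta|=e}f^\beta g_\beta$ in which the $f^\beta$ are monomials in the $f_i$ and the $g_\beta\in B^G$. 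One then organizes the induction, à la Nagata, so that such power-identities propagate finite generation. For (ii) the same device works: for $a\in A^G$, apply power-reductivity to $\pi^{-1}(\k a)\twoheadrightarrow\k a$ to get $a^e\in\pi(B^G)$ for some $e$, so $A^G$ is integral over the finitely generated $\k$-algebra $\pi(B^G)$; combined with (i) and a Noetherianity argument this delivers $A^G$ finitely generated.

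The main obstacle is precisely the gap between power-reductivity and linear reductivity: the Property only guarantees surjectivity of $(S^dM)^G\to S^dL$ for \emph{some} unspecified $d\ge 1$, not a $G$-equivariant splitting (Reynolds operator). Consequently neither the inductive step in (i) nor the descent in (ii) is immediate — one obtains identities for a power $f^e$ rather than for $f$ itself — and arranging the induction (following Nagata) so that these power-identities suffice, and so that the descent yields genuine finite generation rather than merely integrality, is the technical heart of the argument. A secondary, base-specific point is that everything must be phrased to behave under an arbitrary Noetherian $\k$: one needs local finiteness of rational $G$-modules over $\k$ and Noetherianity of symmetric algebras over $\k$, and one cannot split a line off an invariant vector — which is exactly why Definition~\ref{power reductive:Def} is formulated with a \emph{cyclic} module $L$ rather than a one-dimensional one.
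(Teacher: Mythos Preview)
Your overall strategy coincides with the paper's: invoke power-reductivity of Chevalley group schemes (Theorem~\ref{Mum:theorem}) and feed it into Nagata's argument. The paper is more modular --- it first isolates the lifting property (Proposition~\ref{power:Pro}: $A^G\to(A/J)^G$ is power-surjective for every $G$-stable ideal $J$) via essentially the same symmetric-power computation you perform, and then defers wholesale to Springer's exposition \cite[p.~23--26]{Sp} --- whereas you attempt to run the Hilbert-style induction directly. In particular, your identity $f^e=\sum_\beta f^\beta g_\beta$ with $g_\beta\in B^G$ is correct and is the heart of the matter.

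There is, however, a real gap in your step~(ii). From power-surjectivity of $B^G\to A^G$ and finite generation of $B^G$ you conclude only that $A^G$ is integral over the finitely generated subalgebra $\pi(B^G)$; this does \emph{not} by itself give finite generation of $A^G$, and the Noetherianity of $A$ is no obvious help, since $A$ is not in general a Noetherian $\pi(B^G)$-module. You correctly flag the difficulty (``so that the descent yields genuine finite generation rather than merely integrality'') but offer no mechanism. The standard remedy --- Springer's exercise~2.4.12, implicitly invoked by the paper's reference to \cite[p.~23--26]{Sp} --- is to pass to the graded case not via $B=S^*V$ but via \emph{homogenization}: adjoin an extra variable $t$ with trivial $G$-action and form a graded $\tilde A$ with $\tilde A_0=\k$, so that each $\tilde A_d\to A_{\le d}$ (substitution $t\mapsto1$) is a $G$-equivariantly \emph{split} surjection. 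Then $\tilde A^G\to A^G$ is honestly surjective, and finite generation of $\tilde A^G$ (your step~(i)) descends to $A^G$ directly, with no need for power-surjectivity in the descent.
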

There is a long history of cohomological finite generation statements as well,  
where the algebra of invariants $A^G=\h^0(G,A)$ is replaced by the whole algebra $\h^*(G,A)$ of the derived functors of invariants. 
Over fields, Friedlander and Suslin's solution in the case of finite group schemes \cite{FS} lead to the conjecture in \cite{vdK coh}, now a theorem of Touz\'e \cite{TvdK}. 
In Section \ref{Grosshans:section}, we generalize to an arbitrary (Noetherian) base Grosshans' results on his filtration \cite{G}. 
These are basic tools for obtaining finite generation statements on cohomology. 
In Section \ref{finiteness:section}, we apply our results in an exploration of the case when the base ring is $\Z$.
Section \ref{generalities:section} presents results of use in Section \ref{Grosshans:section} and Section \ref{Mumford:section}. Our results support the hope that Touz\'e's theorem extends to an arbitrary base.
\begin{comment}
Section 5 mentioned
\end{comment}
\begin{commentVF}
Now Section 6
\end{commentVF}

\section{Power reductivity and Hilbert's 14th}\label{power reductivity:section}
\subsection{Power surjectivity}
To deal with the strong form of integrality we encounter, we find it convenient to make the following definition.
\begin{Def}
A morphism of $\k$-algebras: $\phi:S\to R$ is \emph{power-surjective}
if every element of $R$ has a power in the image of $\phi$.
It is \emph{universally power-surjective}
if for every $\k$-algebra $A$, the morphism of $\k$-algebras $A\ot\phi$ is power-surjective, that is: 
for every $\k$-algebra $A$, for every $x$ in $A\ot R$, there is a positive integer $n$ so that $x^n$ lies in the image of $A\ot\phi$.
\end{Def}
If $\k$ contains a field, one does not need arbitrary positive exponents $n$, but only powers of the characteristic exponent of $\k$ (compare \cite[Lemma 2.1.4, Exercise 2.1.5]{Sp} or Proposition \ref{co t:Prop} below). Thus if $\k$ is a field of characteristic zero, any universally power-surjective morphism of $\k$-algebras is surjective.
%

\begin{comment}
One also meets this theme with Theorem \ref{hull and powers:theorem}.
But what does one do with it? Well, one needs it when the target is $p$-torsion. We will be using Proposition \ref{co t:Prop}.
\end{comment}
\begin{commentVF}
I kept the rest iversal, until we find another good use for universal.
\end{commentVF}
\subsection{Consequences}
We start by listing consequences of power reductivity, as defined in the introduction (Definition \ref{power reductive:Def}).
\begin{Conv}
An algebraic group over our commutative ring $\k$ is always assumed to be a flat affine group scheme over $\k$.
Flatness is essential, as we tacitly use throughout that the functor of taking invariants is left exact. 
\end{Conv}

\begin{Pro}[Lifting of invariants]\label{power:Pro}
Let $\k$ be a ring and let $G$ be a power-reductive algebraic group over $\k$.
Let $A$ be a finitely generated commutative $\k$-algebra on which $G$ acts rationally through algebra automorphisms. If $J$ is an invariant ideal in $A$, the map induced by reducing mod $J$:
\[
A^G\to(A/J)^G
\]
is power-surjective.
\end{Pro}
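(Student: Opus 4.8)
The plan is to reduce the statement to a single application of the Power Reductivity property, applied to a carefully chosen cyclic quotient. Fix $r \in (A/J)^G$; I want to show some power $r^d$ lifts to $A^G$. First I would pick any preimage $a \in A$ of $r$; although $a$ need not be invariant, the $\k$-submodule $M \subseteq A$ spanned by the $G$-orbit of $a$ (more precisely, the smallest rational $G$-submodule of $A$ containing $a$, which is finitely generated as a $\k$-module since the action is rational) is an invariant submodule. Let $L = M/(M\cap J)$, which is a cyclic $\k$-module (generated by the image of $a$) with the subtle point that $G$ acts \emph{trivially} on it: indeed $M \cap J$ is an invariant submodule, and the image of $M$ in $A/J$ lies in $(A/J)^G$ because $r$ is invariant while $M/(M\cap J) \to A/J$ is injective, so the $G$-action on the cyclic module $L$ is forced to be trivial. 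Thus $\varphi\colon M \twoheadrightarrow L$ is a $G$-module map onto a cyclic module with trivial action, exactly the setup of the Power Reductivity property.

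Applying power reductivity to $\varphi\colon M \to L$ gives a positive integer $d$ such that $(S^d M)^G \to S^d L$ is surjective. Now $S^d L$ is still a cyclic $\k$-module with trivial $G$-action, generated by (the image of) $a^{\otimes d}$; and the multiplication map $A \otimes \cdots \otimes A \to A$ induces a $G$-equivariant $\k$-algebra-compatible map $S^d M \to A$ sending the generator to $a^d$, fitting into a commutative square with the reduction maps $A \to A/J$ and $S^d M \to S^d L$ (the latter followed by the natural map $S^d L \to A/J$ sending the generator to $r^d$). Taking $G$-invariants and chasing the square: a preimage in $(S^d M)^G$ of the generator of $S^d L$ maps to an element $b \in A^G$ whose image in $A/J$ is $r^d$. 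Hence $r^d$ lies in the image of $A^G \to (A/J)^G$, which is what power-surjectivity requires. The hypothesis that $A$ is finitely generated as a $\k$-algebra is not actually needed for this argument; only rationality of the action (to get $M$ finitely generated over $\k$, or even just to get $M$ a rational submodule) is used.

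The main obstacle I anticipate is the verification that $G$ acts trivially on the cyclic module $L$ — this is where one must be careful. The point is that $\varphi(a) \in (A/J)^G$ and $L$ embeds into $A/J$ via the induced map $M/(M\cap J) \hookrightarrow A/J$; since $L$ is generated over $\k$ by $\varphi(a)$ and the $G$-action is $\k$-linear, every element of $L$ is a $\k$-multiple of the invariant $\varphi(a)$, hence invariant. One should also make sure the multiplication map $S^d M \to A$ is genuinely a morphism of $G$-modules (it is, since multiplication in $A$ is $G$-equivariant because $G$ acts by algebra automorphisms) and that the relevant square commutes on the nose before passing to invariants; these are formal but worth stating. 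Everything else is a diagram chase together with left-exactness of $(-)^G$, which is available by the standing flatness convention.
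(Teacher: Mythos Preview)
Your proof is correct and follows essentially the same approach as the paper: take the cyclic module $L=\k\cdot r\subseteq (A/J)^G$, lift it to a $G$-submodule $M\subseteq A$, apply power reductivity to $M\twoheadrightarrow L$, and push the resulting invariant in $(S^dM)^G$ into $A^G$ via the multiplication map. The paper's argument (given inside the proof of Theorem~\ref{fg:theorem}) is terser---it just says ``take for $L$ the cyclic module $\k.b$ and for $M$ any submodule of $A$ such that $\phi(M)=L$'' and records the commuting square---but the substance is identical, and you are right that the finite-generation hypothesis on $A$ plays no role.
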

For an example over $\Z$, see \ref{power lift}.
\begin{commentVF}
But how much was known before us, i.e. what is the relation between geometric reductivity and power reductivity over $\Z$, say?
\end{commentVF}
\begin{comment}
No idea. Ah, see later comment: Noetherian property for invariants in modules implies power reductivity.
In fact property (Int) implies power reductivity.
\end{comment}
\begin{Rem}\label{pow inv:rem}
Let $G$ be power reductive and let $\phi:A\to B$ be a power-surjective $G$-map of $\k$-algebras.
One easily shows that $A^G\to B^G$ is power-surjective.
\end{Rem}
\begin{comment}
You like giving exercises? Is this the wrong place?
\end{comment}
\begin{commentVF}
I just moved it.
\end{commentVF}
\begin{theorem}[Hilbert's fourteenth problem]\label{fg:theorem}
Let $\k$ be a Noetherian ring and let $G$ be an algebraic group over $\k$.
Let $A$ be a finitely generated commutative $\k$-algebra on which $G$ acts rationally through algebra automorphisms. If $G$ is power-reductive, 
then the subring of invariants $A^G$ is a finitely generated $\k$-algebra.
\end{theorem}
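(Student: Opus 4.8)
The plan is to run Nagata's classical argument for finite generation of invariants, with the Lifting of Invariants property (Proposition~\ref{power:Pro}, and Remark~\ref{pow inv:rem}) playing the role that a Reynolds operator, or geometric reductivity, plays over a field. A first step reduces to a graded situation: since the $G$-action on $A$ is rational, finitely many $\k$-algebra generators of $A$ lie in a $G$-submodule $V\subseteq A$ that is finitely generated as a $\k$-module, giving a $G$-equivariant surjection $S(V)\twoheadrightarrow A$ from the symmetric algebra, with $G$-stable kernel $I$; passing to a $G$-stable homogeneous ideal $I^{\ast}$ formed from initial forms of the elements of $I$ produces a finitely generated graded $\k$-algebra $\bar A=S(V)/I^{\ast}$ with $\bar A_{0}=\k$ carrying a grading-preserving $G$-action, while $A$ acquires a $G$-stable filtration with $\gr A\cong\bar A$. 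One then checks, as is standard (compare the deformation results of Section~\ref{Grosshans:section}), that finite generation of $\bar A^{G}$ forces that of $A^{G}$. So we may assume $A=\bigoplus_{d\ge 0}A_{d}$ is graded, $A_{0}=\k$, each $A_{d}$ a finite $\k$-module, and $G$ acts by graded automorphisms.

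Because $\k$ is Noetherian and $A$ is finitely generated, $A$ is Noetherian, so the Hilbert ideal $\mathfrak{a}:=A\cdot A^{G}_{+}$ is finitely generated; choose homogeneous invariants $f_{1},\dots,f_{m}\in A^{G}_{+}$ that generate it, and set $C:=\k[f_{1},\dots,f_{m}]\subseteq A^{G}$, a Noetherian graded subalgebra. It suffices to prove that $A^{G}$ is a finite $C$-module, for then $A^{G}$ is a finitely generated $C$-algebra, hence a finitely generated $\k$-algebra. By graded Nakayama this reduces to finiteness of the graded $\k$-module $A^{G}/C_{+}A^{G}$, which one establishes by induction on the degree. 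Given a homogeneous invariant $a$ of positive degree $d$, membership $a\in\mathfrak{a}$ yields $a=\sum_{i}f_{i}b_{i}$ with $b_{i}\in A_{d-\deg f_{i}}$, and the task is to replace the non-invariant coefficients $b_{i}$ by invariant ones. Over a field this is immediate from the Reynolds operator; here one instead feeds a suitable $G$-surjection onto a cyclic trivial module, assembled from the $b_{i}$, into the power reductivity Property of Definition~\ref{power reductive:Def}, and applies Proposition~\ref{power:Pro} together with Remark~\ref{pow inv:rem}. This expresses $a$ --- possibly only after replacing $a$ by a power $a^{n}$ --- over $C$ modulo invariants of strictly smaller degree; descending on the degree, together with Noetherianity, is meant to bound the $C$-algebra generators of $A^{G}$.

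The crux, and what I expect to be the main obstacle, is precisely this substitution of power reductivity for a Reynolds operator. Power-surjectivity controls a power $a^{n}$, not $a$ itself, and a careless induction yields only that $A^{G}$ is integral over $C$ --- which over a Nagata ring would still suffice (this is essentially how one recovers Thomason's theorem), but not over an arbitrary Noetherian $\k$, where the integral closure of $C$ need not be module-finite. So the induction must be organised --- exploiting that $A$ is Noetherian, and the exact form of the Property, surjectivity of $(S^{d}M)^{G}\to S^{d}L$ --- so that these a priori unbounded exponents $n$ cannot give rise to infinitely many $C$-algebra generators, thereby yielding module-finiteness of $A^{G}$ over $C$ directly. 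A secondary point needing care is the graded reduction itself over an arbitrary Noetherian base, in particular finiteness of the modules $A_{d}$ and the passage of invariants along the filtration, but this is routine next to the induction.
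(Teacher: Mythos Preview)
Your overall plan---reduce to the graded case and then run Nagata's argument with Proposition~\ref{power:Pro} standing in for the Reynolds operator---is exactly what the paper does; indeed the paper's proof is little more than a pointer to Springer \cite[p.~23--26]{Sp}, after checking that power reductivity supplies the needed lifting lemma (this is the displayed commuting diagram). But the specific mechanism you propose for the graded step is not Nagata's, and the gap you yourself flag is real, not a matter of organisation.

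You try to show directly that $A^G$ is a finite module over $C=\k[f_1,\dots,f_m]$, where the $f_i$ generate the Hilbert ideal $A\cdot A^G_+$. The obstacle you name is genuine: from $a=\sum f_i b_i$ power-surjectivity lets you control only a \emph{power} of $a$, and there is no evident way to ``organise the induction'' so that unbounded exponents still force module-finiteness over a fixed $C$. Concretely, the step you need is $C_+A\cap A^G=C_+A^G$ (the analogue of applying a Reynolds operator to the $b_i$), and power-surjectivity of $A^G\to(A/C_+A)^G$ says nothing about this kernel; it only says the cokernel consists of nilpotents. So your reduction to finiteness of $A^G/C_+A^G$ as a $\k$-module does not go through.

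Nagata's argument (as in \cite{Sp}) sidesteps this by Noetherian induction on $G$-stable graded ideals of $A$: pass to a maximal $G$-stable graded ideal modulo which invariants are still not finitely generated, so that $(A/J)^G$ \emph{is} finitely generated for every nonzero $G$-stable $J$. Now pick a nonzero homogeneous $a\in A^G_+$. When $a$ is a non-zerodivisor, left exactness of invariants (this is where flatness of $G$ over $\k$ enters) gives $A^G/aA^G\hookrightarrow (A/aA)^G$; power-surjectivity makes $(A/aA)^G$ integral, hence module-finite, over the image $A^G/aA^G$; since $(A/aA)^G$ is a finitely generated $\k$-algebra by the inductive hypothesis, Artin--Tate forces $A^G/aA^G$ to be one as well, and lifting generators finishes. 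The zerodivisor case is handled by a separate short reduction. The point is that one never fixes a subalgebra $C$ in advance and tries to bound $A^G$ over it; the Noetherian induction is what absorbs the unbounded exponents.
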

\begin{comment}
Without flatness of $G$ everything fails: Taking invariants is probably not left exact. 
So the invariants in a submodule are not what you think. See example below.
Should we make a convention that algebraic group over $\k$ means flat affine group scheme 
of finite type over $\k$?
\end{comment}
\begin{proof}
We apply \cite[p. 23--26]{Sp}. 
\begin{comment}
One also needs to do the exercise 2.4.12 on page 26 !
Here I find it helpful to say:
If $f\in S$ has degree at most $d$, one has a unique homogeneous lift $\tilde f\in \tilde S_d$ with
$x^d\tilde f(v,x)=x^d f(x^{-1}v)$ formally. (Think of $\tilde S$ as a polynomial ring $S[t]$. Then the
lift is for the substitution $t\mapsto 1$.) 
As $\tilde S_d\to S_{\leq d}$ is a split map of $G$-modules, invariants lift. And they still lift when going modulo the ideal, as the map still splits.

Note also that one needs in the proof of Springer \cite[2.4.9]{Sp} that $G$ is flat over the base.
(Where one concludes that $aA\cap A^G=aA^G$. This needs that the fixed point functor is left exact.
Look at $0\to A\xrightarrow{\times a} A\to A/aA\to 0$.) 

In fact such a thing fails in the following non flat example. 
Take the group functor $G$ which associates to a ring $R$ the additive subgroup of two-torsion elements. So $\Z[G]=\Z[T]/2T\Z[T]$. Let $t$ be the class of $T$.
On $M=\Z^2$ we let $G$ act by the matrix $u(t):=\begin{pmatrix}
1&t\\0&1
\end{pmatrix}$. (Convert into comodule language.)
The invariants are the vectors whose second coordinate is a multiple of two. So $2M\cap M^G\neq 2(M^G)$.
\end{comment}
It shows that Theorem \ref{fg:theorem} relies entirely on the conclusion of Proposition \ref{power:Pro}, which is equivalent to the statement \cite[Lemma 2.4.7 p. 23]{Sp}
that, for a surjective $G$-map $\phi:A\to B$ of $\k$-algebras, the induced map on invariants $A^G\to B^G$ is power-surjective.
To prove that power reductivity implies this, consider an invariant $b$ in $B$, take for $L$ the cyclic module $\k.b$ and for $M$ any submodule of $A$ such that $\phi(M)=L$.
We conclude with a commuting diagram:
\[\xymatrix{
\ar[r]\ar[d]^{S^d\phi}(S^dM)^G&\ar[r]\ar[d] (S^dA)^G&\ar[d]^{\phi^{G}} A^G\ \\
\ar[r]S^dL&\ar[r] (S^dB)^G&B^G.
}\]
\end{proof}
\begin{theorem}[Hilbert's fourteenth for modules]\label{module fg:theorem}
Let $\k$ be a Noetherian ring and let $G$ be a power-reductive algebraic group over $\k$.
Let $A$ be a finitely generated commutative $\k$-algebra on which $G$ acts rationally, 
and let $M$ be a Noetherian $A$-module, with an equivariant structure map $A\ot M\to M$. 
If $G$ is power-reductive, then the module of invariants $M^G$ is Noetherian over $A^G$.
\end{theorem}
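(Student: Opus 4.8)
The plan is to reduce everything to Theorem \ref{fg:theorem} by passing to the square-zero (idealization) extension. Set $B=A\ltimes M$: as a $\k$-module $B=A\oplus M$, with the commutative multiplication $(a,m)(a',m')=(aa',\,am'+a'm)$, so that $M$ sits inside $B$ as an ideal with $M^2=0$. Since $A$ is a finitely generated $\k$-algebra and $M$ is a finitely generated $A$-module (being a Noetherian module over the Noetherian ring $A$), $B$ is a finitely generated $\k$-algebra: a finite set of algebra generators of $A$ over $\k$, together with a finite set of $A$-module generators of $M$, generate $B$ as a $\k$-algebra. The diagonal $G$-action on $A\oplus M$ is rational, and because both the multiplication $A\ot A\to A$ and the structure map $A\ot M\to M$ are $G$-equivariant, $G$ acts on $B$ by $\k$-algebra automorphisms.

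Next I would compute the invariants. Taking $G$-invariants is left exact (flatness of $G$ is in force by our standing convention), hence additive on finite direct sums, so $B^G=A^G\oplus M^G$ as $\k$-modules, and one checks directly from the product formula that this identification realizes $B^G$ as the square-zero extension $A^G\ltimes M^G$. In particular $M^G$ is an ideal of $B^G$ with $(M^G)^2=0$, and its $B^G$-module structure factors through the projection $B^G\to A^G$.

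Now apply Theorem \ref{fg:theorem} to $B$: since $\k$ is Noetherian and $G$ is power-reductive, $B^G$ is a finitely generated $\k$-algebra, hence a Noetherian ring. Therefore its ideal $M^G$ is finitely generated as a $B^G$-module, and, the action factoring through $A^G$, also finitely generated as an $A^G$-module. Finally $A^G=B^G/M^G$ is a finitely generated $\k$-algebra (equivalently, apply Theorem \ref{fg:theorem} directly to $A$), hence Noetherian by the Hilbert basis theorem, so the finitely generated $A^G$-module $M^G$ is Noetherian. This completes the argument.

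The proof is short once the idealization is in place; the only real work is bookkeeping — checking that finite generation, rationality of the action, and the algebra-automorphism property all genuinely transfer to $B$, and that it is exactly left exactness of invariants (i.e. flatness of $G$) that makes $B^G$ split as $A^G\oplus M^G$ and keeps $M^G$ an ideal. I do not expect a serious obstacle beyond this; the only subtle point, as elsewhere in the paper, is the insistence on flatness, without which ``invariants in a submodule'' would misbehave and the splitting $B^G=A^G\oplus M^G$ could fail.
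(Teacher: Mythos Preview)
Your argument is correct and follows exactly the approach sketched in the paper: form the semi-direct product ring $A\ltimes M$, apply Theorem~\ref{fg:theorem} to it, and read off that $M^G$ is a finitely generated ideal of the Noetherian ring $B^G=A^G\ltimes M^G$. The paper's own proof is a one-line pointer to this construction (mentioning the symmetric algebra as an alternative), so you have simply written out the details it leaves implicit.
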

\begin{proof}
As in \cite[2.2]{vdK reductive}, consider either the symmetric algebra of $M$ on $A$, 
or the `semi-direct product ring' $A\ltimes M$ as in Proposition \ref{Hm is noetherian:Pro}, whose underlying $G$-module is $A\oplus M$, with product given by
$(a_1,m_1)(a_2,m_2)=(a_1a_2,a_1m_2+a_2m_1)$.
\end{proof}
\subsection{Examples}
\subsubsection{}Let $\k=\Z$.
Consider the group $\SL_{2}$ acting on $2\times 2$ matrices 
$\begin{pmatrix}
a&b\\c&d
\end{pmatrix}$
by conjugation. Let $L$ be the line of homotheties in $M:=M_{2}(\Z)$. 
Write  $V^\#$ to indicate the dual module $\hom_\Z(V,\Z)$ of a $\Z$-module $V$.
The restriction: $M^\#\to L^\#$ extends to
\[
\Z[M]=\Z[a,b,c,d]\to \Z[\lambda]=\Z[L].
\] 
Taking $\SL_{2}$-invariants:
\[
\Z[a,b,c,d]^{\SL_{2}}=\Z[t,D] \to \Z[\lambda],
\]
the trace $t=a+d$ is sent to $2\lambda$, so $\lambda$ does not lift to an invariant in $M^\#$.
The determinant $D=ad-bc$ is sent to $\lambda^2$ however, illustrating power reductivity of $\SL_{2}$.

\subsubsection{}\label{power lift}
Similarly, the adjoint action of $\SL_{2}$ on $\sl_2$  is such
that 
$u(a):=\begin{pmatrix}
1&a\\0&1
\end{pmatrix}$
sends $X,H,Y\in\sl_2$ respectively to $X+aH-a^2Y,H-2aY,Y$. This action extends to the symmetric algebra $S^*(\sl_2)$, which
is a polynomial ring in variables $X,H,Y$. Take $\k=\Z$ again.
The mod $2$ invariant $H$ does not lift to an integral invariant, but $H^2+4XY$ is an integral invariant, and it reduces mod $2$ to the square $H^2$ in $\F_2[X,H,Y]$. This illustrates power reductivity with modules that are not flat, and the
strong link between integral and modular invariants.

\subsubsection{}
Consider the group $U$ of $2\times 2$ upper triangular matrices with diagonal $1$: 
this is just an additive group. 
Let it act on $M$ with basis $\{x,y\}$ by linear substitutions: $u(a)$ sends $x,y$ respectively to $x,ax+y$.
Sending $x$ to $0$ defines $M\to L$, and since $(S^*M)^{U}=\k[x]$, power reductivity fails.

\subsection{Equivalence of power reductivity with property (Int)}\label{power Int:subsec}
Following \cite{vdK reductive}, we say that a group $G$ satisfies (Int) if 
$(A/J)^G$ is integral over the image of $A^G$ for every $A$ and $J$ with $G$ action.
Note that if $(A/J)^G$ is a Noetherian $A^G$-module (compare Theorem \ref{module fg:theorem}),
it must be integral over the image of $A^G$. 
As explained in \cite[Theorem 2.8]{vdK reductive}, when $\k$ is a field, 
the property (Int) is equivalent to geometric reductivity, 
which is equivalent to power-reductivity by \cite[Lemma 2.4.7 p. 23]{Sp}. 
In general, property (Int) is still equivalent to power-reductivity. 
But geometric reductivity in the sense of \cite{Se} looks too weak. 
\begin{Pro}
An algebraic group $G$ has property (Int) if, and only if, it is power-reductive.
\end{Pro}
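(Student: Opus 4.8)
The statement is an equivalence, so there are two implications to establish. One direction, power-reductivity $\Rightarrow$ (Int), is essentially already done: the proof of Theorem~\ref{fg:theorem} shows that if $G$ is power-reductive then for \emph{every} surjective $G$-map of $\k$-algebras $\phi\colon A\to B$ the induced map $A^G\to B^G$ is power-surjective, and that argument uses no Noetherian or finite-generation hypothesis. Applying it with $B=A/J$ for an arbitrary $G$-invariant ideal $J$, every element of $(A/J)^G$ has a power in the image of $A^G$, hence satisfies a monic equation over that image; so $(A/J)^G$ is integral over the image of $A^G$, which is property (Int). Thus this half requires only a one-line reduction to the proof of Theorem~\ref{fg:theorem}.

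For the converse, (Int) $\Rightarrow$ power-reductive, I would fix the data of Definition~\ref{power reductive:Def}: a cyclic $\k$-module $L$ with trivial $G$-action, a generator $\ell$ with $I:=\mathrm{Ann}_\k(\ell)$ so that $L\cong\k/I$, a rational $G$-module $M$, and a surjection $\varphi\colon M\to L$. Apply property (Int) to the graded rational $G$-algebra $A:=S^*M$ and the homogeneous $G$-invariant ideal $J:=\ker\bigl(S^*\varphi\colon S^*M\to S^*L\bigr)$. Since $S^*\varphi$ is a graded surjection, $A/J\cong S^*L$ with trivial $G$-action, so $(A/J)^G=S^*L$, and (Int) says that $S^*L$ is integral over the image $R'$ of $(S^*M)^G$. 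Here $R'$ is a graded subring of $S^*L$ whose degree-$d$ piece $R'_d$ is the image of $(S^dM)^G\to S^dL$; I would record the elementary structural facts that, for a cyclic module, $S^dL\cong\k/I$ with generator $\ell^d$, and that the algebra multiplication $S^aL\otimes S^bL\to S^{a+b}L$ is just multiplication in $\k/I$. Writing $R'_d=J_d/I$ for an ideal $I\subseteq J_d\subseteq\k$, the surjectivity of $(S^dM)^G\to S^dL$ that we want is precisely the statement $J_d=\k$.

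Next I would extract a single exponent. Take a monic equation $\ell^n+c_{n-1}\ell^{n-1}+\cdots+c_0=0$ for $\ell$ over $R'$ and keep only its homogeneous component of degree $n$, so that we may assume $c_i\in R'_{n-i}$; translating each term into $\k/I$ via the generators $\ell^d$ turns the equation into $1+a_{n-1}+\cdots+a_0\in I$ with $a_i\in J_{n-i}$, that is, $J_1+\cdots+J_n=\k$. Finally I would upgrade this to $J_d=\k$ for a single $d$ using two observations: multiplying invariant lifts shows $J_a\cdot J_b\subseteq J_{a+b}$, with $J_0=\k$; and for every maximal ideal $\m$ of $\k$, since $\m$ is prime the set $\{\,d\ge0 : J_d\not\subseteq\m\,\}$ is a submonoid of $\N$, and it is non-trivial because $J_1+\cdots+J_n\not\subseteq\m$, so it contains some $d_0\le n$ and hence $n!$. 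Therefore $J_{n!}\not\subseteq\m$ for every maximal $\m$, i.e.\ $J_{n!}=\k$, so $(S^{n!}M)^G\to S^{n!}L$ is onto and power-reductivity holds.

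The only genuinely new point compared with the field case is this last passage, from $J_1+\cdots+J_n=\k$ to a single good exponent. Over a field, $J_1+\cdots+J_n=\k$ forces some $J_d\neq0$, hence $J_d=\k$, but over a general base that collapse fails, and one must exploit the multiplicativity $J_aJ_b\subseteq J_{a+b}$ together with the monoid/prime-avoidance argument above (which is what forces the exponent to be something like $n!$ rather than merely $\le n$). The remaining work — the rationality of $S^*M$ and of the $G$-submodules used in the first implication, and the bookkeeping that $S^dL\cong\k/I$ with the expected multiplication — is routine.
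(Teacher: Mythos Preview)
Your proof is correct and follows essentially the same route as the paper: for the nontrivial direction you apply property (Int) to $A=S^*M$ with quotient $S^*L$, homogenize the resulting monic equation for the generator $\ell$, and then upgrade the information in degrees $1,\ldots,n$ to surjectivity in the single degree $n!$.

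The only difference is cosmetic, in this final step. The paper writes the coefficients as $a_i=r_ib^i$ and sets $r=1+r_1+\cdots+r_n$; then $r^{(n-1)!}$ annihilates $b^{n!}$, each $r_i^{n!/i}$ lies in the image in degree $n!$, and since $r,r_1,\ldots,r_n$ generate the unit ideal so do their powers, forcing the cokernel in degree $n!$ to vanish. Your version packages the same content as the submonoid/prime-avoidance argument on the ideals $J_d$ with $J_aJ_b\subseteq J_{a+b}$. Both yield $d=n!$ and are equally short; your phrasing is arguably a bit more conceptual, the paper's a bit more explicit.
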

\begin{proof}
By Proposition \ref{power:Pro}, power reductivity implies property (Int).
We prove the converse.
Let $\phi: M\to L$ be as in the formulation of power reductivity in Definition \ref{power reductive:Def}.
Choose a generator $b$ of $L$.
Property (Int) gives a polynomial $t^n+a_1t^{n-1}+\cdots+a_n$ with $b$ as root, 
and with $a_i$ in the image of $S^*(\varphi):(S^*M)^G\to S^*L$.
As $b$ is homogeneous of degree one, we may assume $a_i\in S^i\varphi( (S^iM)^G)$. 
Write $a_i$ as $r_ib^{i}$ with $r_i\in\k$.
Put $r=1+r_1+\cdots r_n$. Then $rb^n=0$, and $r^{(n-1)!}$ annihilates $b^{n!}$.
Since $a_i^{n!/i}$ lies in the image of $S^{n!}\varphi$ : $(S^{n!}M)^G\to S^{n!}L$,
the cokernel of this map is annihilated by $r_i^{n!/i}$. 
Together $r^{(n-1)!}$ and the $r_i^{n!/i}$ generate the unit ideal. So the cokernel vanishes.
\end{proof}
\begin{Exm} Let $G$ be a
finite group, viewed as an algebraic group over $\k$. Then $A$ is integral over $A^G$,
because $a$ is a root of $\prod_{g\in G}(x-g(a))$. (This goes back to
Emmy Noether \cite{noether}.)
Property (Int) follows easily. Hence $G$ is power reductive.
\end{Exm}
\section{Mumford's conjecture over an arbitrary base}\label{Mumford:section}
This section deals with the following generalization of the Mumford conjecture.
\begin{theorem}[Mumford conjecture]\label{Mum:theorem}
A Chevalley group scheme is power-reductive for every base.
\end{theorem}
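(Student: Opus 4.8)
The statement to prove is that a Chevalley group scheme $G$ over $\Z$ (hence over any base, by base change) is power-reductive in the sense of Definition \ref{power reductive:Def}. I would first reduce to the universal base $\k=\Z$: since symmetric powers, invariants of flat group schemes, and cyclic modules all behave well under base change, and a surjection of $\Z$-modules splits after tensoring issues aside, it suffices to prove power reductivity over $\Z$ — more precisely, it suffices to handle the case $\k=\Z$ and $L=\Z/m$ or $L=\Z$, the two types of cyclic modules, and the case $L=\Z$ is classical geometric reductivity in characteristic $0$ applied after clearing denominators. So the real content is: given $\varphi\colon M\twoheadrightarrow \Z/p$ (reducing to a prime power, then a prime, by dévissage) with $M$ a rational $G$-module over $\Z$, find $d$ with $(S^dM)^G\to S^d(\Z/p)=\Z/p$ surjective.

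**Main step: reduce mod $p$ and invoke the Mumford conjecture over $\F_p$.** Over the field $\F_p$, the Mumford conjecture is a theorem (Haboush's theorem: reductive groups over a field are geometrically reductive, equivalently — by \cite[Lemma 2.4.7]{Sp} — power-reductive). So I would pull back the situation to $\F_p$: from $\varphi\colon M\to \Z/p$ I get $\bar\varphi\colon M/pM \to \F_p$ (or rather the relevant $\F_p$-form of $L$), and Haboush gives $d$ with $(S^d(M/pM))^G \to \F_p$ surjective, i.e. there is a $G$-invariant in $S^d(M/pM)$ hitting a generator. The crux is then to \emph{lift} this modular invariant to an honest invariant in $(S^dM)^G$, or — since lifting to the nose will generally fail (as the $\sl_2$ example in \ref{power lift} shows!) — to pass to a further power $S^{d'}$ and lift there. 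This is exactly the phenomenon that power-reductivity is designed to accommodate: one does not lift $H$, one lifts $H^2+4XY$.

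**The lifting mechanism — the hard part.** The obstruction to lifting a $G$-invariant in $(S^dM)^G \otimes \F_p = (S^d M / p S^dM)^G$ back to $(S^dM)^G$ lives in $H^1(G, S^dM)$, via the long exact sequence of $0\to S^dM \xrightarrow{p} S^dM \to S^dM/p \to 0$; this $H^1$ need not vanish. The key idea I would pursue — and I expect this to be the technical heart of Section \ref{Mumford:section} — is a \emph{multiplicativity / Frobenius} trick: raising a modular invariant $\bar f \in (S^dM/p)^G$ to the $p$-th power and using that the $p$-th power map interacts with the obstruction. More concretely: if $\bar f$ lifts to $f \in S^dM$ with $\bar{\partial f}$ capturing the obstruction class, then $f^p$ has the property that its obstruction is a $p$-th-power-related class which one can kill, because $d(f^p) \equiv 0$ in the relevant sense modulo $p$ — the obstruction class of $f^p$ factors through a Frobenius-twisted module and can be absorbed. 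Iterating and combining several primes via a coprimality argument (exactly as in the (Int) $\Rightarrow$ power-reductive proof above, where $r^{(n-1)!}$ and the $r_i^{n!/i}$ generate the unit ideal) then yields a single exponent $d$ working over $\Z$. The delicate points will be (a) making the Frobenius-twist absorption of the $H^1$-obstruction precise for rational $G$-modules over $\Z$ that are not flat quotients, and (b) handling the transition from $\Z/p$ to a general cyclic $L$ and assembling finitely many primes; I anticipate that the paper handles these via the ``hull'' and power machinery alluded to in the commented-out remarks and in Proposition \ref{co t:Prop}, rather than a bare-hands cohomology computation.
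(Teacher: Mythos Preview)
Your proposal has two genuine gaps, and the paper's argument is structured quite differently.

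\textbf{The reduction to $\k=\Z$ is not valid.} A $G_\k$-module $M$ need not descend to a $G_\Z$-module, and a cyclic $\k$-module $L$ is almost never cyclic over $\Z$ (think $\k=\F_p[t]$, $L=\k$). So ``prove it over $\Z$ and base-change'' does not make sense here. What the paper does instead is reduce to \emph{local} rings: one forms the radical ideal $\I_d(\k)$ of scalars hit by $(S^dM)^G$, observes that the family $\{\I_d\}$ is monotone in $d$, and uses that taking $G$-invariants commutes with localization (the whole Hochschild complex does) to reduce to showing $\I_d(\k_{(\M)})=\k_{(\M)}$ for each maximal ideal $\M$. This is the correct substitute for your ``universal base'' step and it pins down a residue characteristic $p$ without pretending $M$ comes from $\Z$.

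\textbf{The Frobenius lifting trick does not terminate.} Your plan is to take Haboush's invariant $\bar f\in (S^dM/p)^G$ and use the Leibniz rule for the boundary of $0\to pA/p^2A\to A/p^2A\to A/pA\to 0$ (with $A=S^*M$) to lift $\bar f^p$ to $(A/p^2A)^G$, then iterate. But this only ever lifts modulo $p^n$; you never reach $A^G$. The ``coprimality'' argument you invoke from the proof of (Int) $\Rightarrow$ power-reductive does not help: there one has several \emph{different} annihilators generating the unit ideal, whereas here you are stuck with the single prime $p$. This is exactly the kind of argument that appears in Section~\ref{finiteness:section} (Lemma~\ref{power:Lem}) where one only needs to climb from $A/nA$ to $A/mnA$, a finite tower --- it does not climb to $A$.

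\textbf{What the paper actually does.} Rather than lifting an \emph{invariant}, the paper lifts a \emph{map}. Over the local ring $\k$ of residue characteristic $p$ it takes $X=\St=\ind_B^G((p^r-1)\rho)$ for $r$ large relative to the weights of $M$. The module $\hom_\k(X,X)^\#$ carries a canonical $G$-invariant in degree $d=\mathrm{rk}\,X$, namely the determinant, and the evaluation-at-$\id$ map $\psi:\hom_\k(X,X)^\#\to L$ is the thing one lifts through $\varphi$. The obstruction to lifting $\psi$ equivariantly to $M$ lies in $\ext^1_G(\hom_\k(\St,\St)^\#,\ker\varphi)$, and the technical core (Proposition~\ref{ext cancel:Pro}) is that this $\ext^1$ vanishes. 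That vanishing is proved by: (i) the classical field case via self-duality of Steinberg and the results of \cite{CPSvdK}; (ii) a universal-coefficient plus Nakayama argument to handle modules of the form $N_\Z\otimes_\Z V$; (iii) a descending induction on highest weight using the universal property of Weyl modules (Proposition~\ref{universal Weyl:Pro}). So Haboush's idea is re-run over the local ring, not imported from $\F_p$ and lifted.
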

By a Chevalley group scheme over $\Z$, we mean a connected split reductive algebraic $\Z$-group $G_\Z$,
and, by a Chevalley group scheme over a ring $\k$, we mean an algebraic $\k$-group $G=G_\k$ obtained by base change from such a $G_\Z$.

\begin{comment}
To prove power reductivity of a group scheme over some base ring $R$, one may first take a faithfully flat extension of $R$.
So some twisted forms of Chevalley group schemes are covered as well. Compare the discussion in \cite[p. 239]{Se}.
\end{comment}

We want to establish the following:
\begin{Property}
Let $\k$ be a commutative ring.
Let $L$ be a cyclic $\k$-module with trivial $G$-action. 
Let $M$ be a rational $G$-module, and let $\varphi$ be a $G$-module map from $M$ onto $L$.
Then there is a positive integer $d$ such that the $d$-th symmetric power of $\varphi$ induces a surjection:
\[
(S^d M)^G\to S^d L . 
\]
\end{Property}
\subsection{Reduction to local rings}
We first reduce to the case of a local ring.
For each positive integer $d$, consider the ideal in $\k$ formed by those scalars which are hit by an invariant in $(S^d M)^G$, and let:
\[
\I_d(\k):=\{x\in \k\ |\ \exists m\in\N,\ x^m.S^d L \subset S^d\varphi((S^d M)^G)\}
\]
be its radical. Note that these ideals form a monotone family: if $d$ divides $d'$, then $\I_d$ is contained in $\I_{d'}$.
We want to show that $\I_d(\k)$ equals $\k$ for some $d$. 
To that purpose, it is enough to prove that for each maximal ideal $\M$ in $\k$, 
the localized $\I_{d}(\k)_{(\M)}$ equals the local ring $\k_{(\M)}$ for some $d$. 
Notice that taking invariants commutes with localization. 
Indeed the whole Hochschild complex does and localization is exact.
As a result, the localized $\I_{d}(\k)_{(\M)}$ is equal to the ideal $\I_d(\k_{(\M)})$.
This shows that it is enough to prove the property for a local ring $\k$. 
\par\medskip
For the rest of this proof, $\k$ denotes a local ring with residual characteristic $p$.
\subsection{Reduction to cohomology}
As explained in Section \ref{reduction}, we may assume that $G$ is semisimple simply connected.
Replacing $M$ if necessary by a submodule that still maps onto $L$, we may assume that $M$ is finitely generated.
\par
We then reduce the desired property to cohomological algebra. 
To that effect, if $X$ is a $G$-module, consider the evaluation map on the identity $\id_X$:
$\hom_\k(X,X)^{\#}\to \k $
(we use $V^\#$ to indicate the dual module $\hom_\k(V,\k)$ of a module $V$).
If $X$ is $\k$-free of finite rank $d$, then $S^d (\hom_\k(X,X)^{\#})$ contains the determinant.
The determinant is $G$-invariant, and its evaluation at $\id_X$ is equal to $1$. 
Let $b$ a $\k$-generator of $L$ and consider the composite:
\[
\psi : \hom_\k(X,X)^{\#}\to \k \to \k.b=L .
\]
Its $d$-th power $S^d\psi$ sends the determinant to $b^d$.
Suppose further that $\psi$ lifts to $M$ by a $G$-equivariant map. 
Then, choosing $d$ to be the $\k$-rank of $X$, the $d$-th power of 
the resulting map $S^d (\hom_\k(X,X)^{\#})\to S^dM$ sends the determinant to a $G$-invariant in $S^dM$,
which is sent to $b^d$ through $S^d\varphi$. 
This would establish the property.
\[
\xymatrix{
& \hom_\k(X,X)^{\#} \ar[d]^{\psi} \ar@{-->}[dl]\\
M \ar[r]_{\varphi} &L 
}
\]
\par
The existence of a lifting would follow from the vanishing of the extension group:
\[
\ext_G^1((\hom_\k(X,X)^{\#},\ker\varphi),
\]
or, better, from acyclicity, {\it i.e.}\ the vanishing of all positive degree Ext-groups.
\par
Inspired by the proof of the Mumford conjecture in \cite[(3.6)]{CPSvdK}, we choose $X$ to be an adequate Steinberg module. 
To make this choice precise, we need notations, essentially borrowed from \cite{CPSvdK,AK}.
\subsection{Notations}\label{notations}
We decide as in \cite{J}, and contrary to \cite{vdK book} and \cite{CPSvdK},
that the roots of the standard Borel subgroup $B$ are negative. 
The opposite Borel group $B^+$ of $B$ will thus have positive roots. 
We also fix a Weyl group invariant inner product on the weight lattice $X(T)$. 
Thus we can speak of the length of a weight.
\par
For a weight $\lambda$ in the weight lattice, 
we denote by $\lambda$ as well the corresponding one-dimensional rational $B$-module
(or sometimes $B^+$-module), and by $\nabla_{\lambda}$ the costandard module (Schur module) $\ind_B^G\lambda$ induced from it. 
Dually, we denote by $\Delta_{\lambda}$ the standard module (Weyl module) of highest weight $\lambda$.
So $\Delta_{\lambda}=\ind_{B^+}^G(-\lambda)^{\#}$.
We shall use that, over $\Z$, these modules are $\Z$-free \cite[II Ch. 8]{J}.
\par
We let $\rho$ be half the sum of the positive roots of $G$. It is also the sum of the fundamental weights. As $G$ is simply connected, the fundamental weights are weights of $B$.
\par
Let $p$ be the characteristic of the residue field of the local ring $\k$.
When $p$ is positive, for each positive integer $r$, we let the weight $\sigma_r$ be $(p^r-1)\rho$.
When $p$ is $0$, we let $\sigma_r$ be $r\rho$.
Let $St_r$ be the $G$-module $\nabla_{\sigma_r}=\ind_B^G\sigma_r$; it is a usual Steinberg module when $\k$ is a field of positive characteristic.
\subsection{}
We shall use the following combinatorial lemma:
\begin{Lem}\label{r:Lem}
Let $R$ be a positive real number. If $r$ is a large enough integer, for all weights $\mu$ of length less than $R$, $\sigma_r+\mu$ is dominant.
\end{Lem}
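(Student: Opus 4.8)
The plan is to verify dominance directly on the simple coroots. Recall that a weight $\nu$ is dominant exactly when $\langle\nu,\alpha^\vee\rangle\ge 0$ for every simple root $\alpha$, and that the relevant feature of $\sigma_r$ is this: since $\langle\rho,\alpha^\vee\rangle=1$ for every simple root $\alpha$, we have $\langle\sigma_r,\alpha^\vee\rangle=p^r-1$ (respectively $r$, when $p=0$), independently of $\alpha$. In either case this quantity tends to $+\infty$ with $r$, using $p\ge 2$ in the positive-characteristic case.

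First I would estimate the coroot pairing of a weight of bounded length. Writing $(\cdot,\cdot)$ for the fixed Weyl-invariant inner product on $X(T)$ and $\|\cdot\|$ for the associated norm, one has $\langle\mu,\alpha^\vee\rangle=2(\mu,\alpha)/(\alpha,\alpha)$, so Cauchy--Schwarz gives $|\langle\mu,\alpha^\vee\rangle|\le 2\|\mu\|/\|\alpha\|$. As the set of simple roots is finite, $c:=\min_\alpha\|\alpha\|$ is a positive constant depending only on $G$ and the chosen form, and therefore $|\langle\mu,\alpha^\vee\rangle|<2R/c$ for every simple root $\alpha$ and every weight $\mu$ with $\|\mu\|<R$.

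Combining the two estimates, for such a $\mu$ and any simple root $\alpha$,
\[
\langle\sigma_r+\mu,\alpha^\vee\rangle=\langle\sigma_r,\alpha^\vee\rangle+\langle\mu,\alpha^\vee\rangle>(p^r-1)-\frac{2R}{c}
\]
(and $>r-2R/c$ when $p=0$). Choosing $r$ large enough that $p^r-1>2R/c$ (respectively $r>2R/c$) makes all these pairings nonnegative simultaneously, so $\sigma_r+\mu$ is dominant; note that the threshold on $r$ depends only on $R$, not on $\mu$, which is what the statement requires. There is no real obstacle here beyond bookkeeping: the one point to keep straight is the translation between ``length'' (governed by the inner product) and ``dominant'' (governed by the coroot pairing) via $\langle\mu,\alpha^\vee\rangle=2(\mu,\alpha)/(\alpha,\alpha)$, and the observation that the cases $p>0$ and $p=0$ can be treated in one stroke because $\langle\sigma_r,\alpha^\vee\rangle\to\infty$ in both.
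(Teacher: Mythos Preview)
Your proof is correct and is exactly the standard argument one would expect: bound $|\langle\mu,\alpha^\vee\rangle|$ uniformly over simple roots via Cauchy--Schwarz, then use that $\langle\sigma_r,\alpha^\vee\rangle=\langle(p^r-1)\rho,\alpha^\vee\rangle$ (or $r$ when $p=0$) grows without bound. The paper itself does not supply a proof of this lemma; it is simply stated as a ``combinatorial lemma'' and used immediately, so there is no alternative argument to compare with.
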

So, if $r$ is a large enough integer to satisfy the condition in Lemma \ref{r:Lem},
for all $G$-modules $M$ with weights that have length less than $R$,
all the weights in $\sigma_r\ot M$ are dominant.
Note that in the preceding discussion, the $G$-module $M$ is finitely generated. 
Thus the weights of $M$, and hence of $\ker\varphi$, are bounded. 
Thus, Theorem \ref{Mum:theorem} is implied by the following proposition.
\begin{Pro}\label{ext cancel:Pro}
Let $R$ be a positive real number, and let $r$ be as in Lemma \ref{r:Lem} .
For all local rings $\k$ with characteristic $p$ residue field,
for all $G$-module $N$ with weights of length less than $R$, and for all positive integers $n$:
\[
\ext^n_G((\hom_\k(\St,\St)^{\#}, N)=0\ .
\]
\end{Pro}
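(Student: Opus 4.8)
The plan is to reduce the vanishing statement, by a standard base-change and universal-coefficients argument, to the case $\k = \Z$ (or $\k=\F_p$, $\k=\Q$), where powerful structural results about the Steinberg module and good filtrations are available. First I would observe that $\hom_\k(\St,\St)^{\#} \cong \St^{\#}\ot\St^{\#}$ as $G$-modules, and that over $\Z$ the module $St_r=\nabla_{\sigma_r}$ is $\Z$-free with a dual Weyl filtration, while $\St^{\#}$ has a Weyl filtration. So $\St^{\#}\ot\St^{\#}$ is, over $\Z$, a $\Z$-free module with a Weyl ($\Delta$-) filtration. Dually, one should think of the $\ext$ as computing, after a twist, morphisms out of a tensor of Steinberg-type modules; the key point will be to exploit that $\St\ot\St$ has a \emph{good} ($\nabla$-) filtration. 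This is precisely the phenomenon used in \cite{CPSvdK}: tensoring with a sufficiently deep Steinberg module turns arbitrary (bounded-weight) modules into modules with good filtration, because by Lemma \ref{r:Lem} all the relevant weights $\sigma_r+\mu$ become dominant.

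Next I would set up the cohomological computation. Write $E = \hom_\k(\St,\St)^{\#}$. Since $St_r$ is $\k$-free of finite rank, $E \cong \St\ot\St^{\#}$ up to the usual identifications, and one wants $\ext^n_G(E,N)=0$ for $n>0$ and all bounded-weight $N$. I would use the tensor identity $\ext^*_G(\St\ot\St^{\#}, N) \cong \ext^*_G(\St, \St\ot N)$ (valid since $\St^{\#}$ is finitely generated $\k$-free, so dualizing is an exact functor commuting with everything in sight). Now the crucial input: by Lemma \ref{r:Lem}, for $r$ large relative to $R$, every weight of $\St\ot N$ of the form $\sigma_r+\mu$ with $\mu$ a weight of $N$ is dominant; combined with the fact (valid over $\Z$, hence after base change over any $\k$, since the relevant modules are $\Z$-free and the filtrations base-change) that $\nabla_{\sigma_r}\ot\nabla_\mu$ has a good filtration when all weights $\sigma_r+\mu'$ occurring are dominant — this is the Mathieu/Andersen–Jantzen–Donkin type result, but here one only needs the easy bounded-weight case as in \cite{CPSvdK,AK} — we get that $\St\ot N$ has a filtration with subquotients $\nabla_\nu$. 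Then $\ext^n_G(\St,\nabla_\nu) = \ext^n_G(\nabla_{\sigma_r},\nabla_\nu)$ vanishes for $n>0$ because $\nabla_{\sigma_r}$ has a Weyl filtration with top weight $\sigma_r$ and (as $\sigma_r$ is a sufficiently dominant weight, in fact the unique such when $\k$ is a field; in general use that $\nabla_{\sigma_r}$ itself equals $\Delta_{\sigma_r}$ up to the relevant range, or directly that $\ext^{>0}_G(\Delta_?,\nabla_?)=0$) one lands in the vanishing range for $\ext$ between a Weyl-filtered and a good-filtered module. Assembling the filtration via the long exact sequence gives $\ext^n_G(\St,\St\ot N)=0$ for $n>0$, hence the Proposition.

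The main obstacle, and where I would spend the most care, is justifying the good-filtration statement for $\St\ot N$ \emph{over an arbitrary local ring $\k$}, not just over a field. Over a field this is classical, but here one must check that the relevant modules ($\nabla$'s, $\Delta$'s, $\St_r$, and the tensor products) are $\k$-free and that their good/Weyl filtrations are obtained by base change from $\Z$, so that $\ext$-vanishing over $\Z$ (together with $\Z$-freeness, which kills the $\mathrm{Tor}$ correction terms in the universal-coefficient spectral sequence $\ext^*_{G_\Z}(-,-)\ot\k \Rightarrow \ext^*_{G_\k}(-\ot\k,-\ot\k)$) propagates to $\ext$-vanishing over $\k$. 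A secondary subtlety is that when $p=0$ we chose $\sigma_r = r\rho$ rather than $(p^r-1)\rho$, and one must check Lemma \ref{r:Lem} and the dominance bookkeeping uniformly; but since the hypotheses of the Proposition already fix $r$ via Lemma \ref{r:Lem}, this is bookkeeping rather than a genuine difficulty. I expect the write-up to proceed: (i) identify $E$ and apply the tensor identity; (ii) invoke Lemma \ref{r:Lem} to get dominance of all weights of $\St\ot N$; (iii) deduce a $\nabla$-filtration of $\St\ot N$ by reduction to $\Z$; (iv) compute $\ext^{>0}_G(\St,\nabla_\nu)=0$ by Weyl-vs-good filtration vanishing, again via reduction to $\Z$; (v) conclude by dévissage.
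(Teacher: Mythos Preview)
Your overall strategy --- rewrite $\ext^n_G(E,N)$ as $\h^n(G,\St\ot\St\ot N)$ via the tensor identity and then exploit that tensoring with $\St$ pushes all weights into the dominant cone --- matches the paper's argument over a field and is fine there. The gap is in your step (iii), the passage to an arbitrary local ring $\k$. You assert that $\St\ot N$ has a filtration with subquotients $\nabla_\nu$, justified ``by reduction to $\Z$, since the relevant modules are $\k$-free and the filtrations base-change''. But $N$ is an \emph{arbitrary} $G$-module over $\k$ with bounded weights: it need not be $\k$-free and is not obtained by base change from any $G_\Z$-module, so there is nothing to reduce to $\Z$. Since each $\nabla_\nu$ is $\k$-free, any module with a finite $\nabla$-filtration is $\k$-free; hence your claim is simply false whenever $N$ has $\k$-torsion. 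Your appeal to the good filtration of $\nabla_{\sigma_r}\ot\nabla_\mu$ does not help, because $N$ is not a priori built from $\nabla_\mu$'s.

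The paper repairs exactly this point by a d\'evissage on $N$ rather than on $\St\ot N$. Using the universal property of Weyl modules (Proposition~\ref{universal Weyl:Pro}), one maps $\Delta_{\mu,\Z}\ot_\Z N_\mu\to N$, where $\mu$ is the highest weight of $N$ and $N_\mu$ its weight space regarded as a trivial $G$-module; kernel and cokernel have strictly lower highest weight. Descending induction then reduces to the case $N=N_\Z\ot_\Z V$ with $N_\Z$ a $\Z$-free $G_\Z$-module and $V$ a $\k$-module with trivial action. In that case the Hochschild complex computing $\h^*(G,\hom_\k(\St,\St)\ot N)$ is literally $C(G_\Z,Y_\Z)\ot V$ for a $\Z$-free $Y_\Z$, and the universal coefficient theorem, combined with the field case, finite generation, and Nakayama over $\Z_{(p)}$, gives the vanishing. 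This is the missing ingredient your plan needs; once you insert it, steps (i), (ii), (v) are fine. (Your step (iv) implicitly uses $\nabla_{\sigma_r}=\Delta_{\sigma_r}$ over $\k$; this is true --- it holds over $\F_p$, so the cokernel of $\Delta_{\sigma_r,\Z}\hookrightarrow\nabla_{\sigma_r,\Z}$ has no $p$-torsion, hence the two coincide over $\Z_{(p)}$ and thus over $\k$ --- but it deserves a sentence rather than a parenthetical gesture.)
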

\begin{proof}
First, the result is true when $\k$ is a field. Indeed, we have chosen $\St$ to be a self-dual Steinberg module, so, for each positive integer $n$:
$$\ext^n_G((\hom_\k(\St,\St)^{\#}, N)=\h^n(G,\St\ot \St\ot N)=\h^n(B,\St\ot \si_r\ot N).$$
Vanishing follows by \cite[Corollary (3.3')]{CPSvdK} or the proof of \cite[Corollary (3.7)]{CPSvdK}.
\par
Suppose now that $N$ is defined over $\Z$ by a free $\Z$-module, in the following sense: $N=N_{\Z}\ot_{\Z} V$ for a $\Z$-free $G_{\Z}$-module $N_{\Z}$ and a $\k$-module $V$ with trivial $G$ action. We then use the universal coefficient theorem \cite[A.X.4.7]{B} (see also \cite[I.4.18]{J}) to prove acyclicity in this case.
\par
Specifically, let us note $Y_{\Z}:=\hom_{\Z} ((\St)_{\Z},(\St)_{\Z})\ot N_{\Z}$, 
so that, using base change (Proposition \ref{base change:Pro} for $\lambda=\sigma_{r}$):
\[\ext^n_G((\hom_\k(\St,\St)^{\#}, N)=\h^n(G,Y_{\Z}\ot V).\]
This cohomology is computed \cite[II.3]{DG} (see also \cite[I.4.16]{J}) by taking the homology of the Hochschild complex $C(G,Y_{\Z}\ot V)$.
This complex is isomorphic to the complex obtained by tensoring with $V$ the integral Hochschild complex $C(G_{\Z},Y_{\Z})$.
Since the latter is a complex of torsion-free abelian groups, we deduce, by the universal coefficient theorem
applied to tensoring with a characteristic $p$ field $k$,
and the vanishing for the case of such a field, that: $\h^n(G_{\Z},Y_{\Z})\ot{k}=0$, for all positive $n$. 
We apply this when $k$ is the residue field of $\Z_{(p)}$; note that if $p=0$ the residue field $k$ is just $\Q$.
Since the cohomology $\h^n(G_{\Z},Y_{\Z})$ is finitely generated \cite[B.6]{J},
the Nakayama lemma implies that: $\h^n(G_{\Z},Y_{\Z})\ot \Z_{(p)}=0$, for all positive $n$. 
And $\h^n(G_{\Z},Y_{\Z})\ot \Z_{(p)}=\h^n(G_{\Z},Y_{\Z}\ot \Z_{(p)})$ because localization is exact.
The complex $C(G_{\Z},Y_{\Z}\ot \Z_{(p)})$ is a complex of torsion-free $\Z_{(p)}$-modules, we thus can apply the universal coefficient theorem to tensoring with $V$.
The vanishing of $\h^n(G,Y_{\Z}\ot\Z_{(p)}\ot V)=\h^n(G,Y_{\Z}\ot V)$ follows. 
\par
For the general case, we proceed by descending induction on the highest weight of $N$.
To perform the induction, we first choose a total order on weights of length less than $R$, 
that refines the usual dominance order of \cite[II 1.5]{J}. 
Initiate the induction with $N=0$. For the induction step,
consider the highest weight $\mu$ in $N$ and let $N_{\mu}$ be its weight space. 
By the preceding case, we obtain vanishing for $\Delta_{\mu_{\Z}}\ot_{\Z}N_{\mu}$. 
Now, by Proposition \ref{universal Weyl:Pro}, 
$\Delta_{\mu_{\Z}}\ot_{\Z}N_{\mu}$ maps to $N$, and the kernel and the cokernel of this map have lower highest weight. By induction, they give vanishing cohomology. 
Thus $\hom_\k(\St,\St)\ot N$ is in an exact sequence where three out of four terms are acyclic, 
hence it is acyclic.
\end{proof}
This concludes the proof of Theorem \ref{Mum:theorem}.
\begin{comment}
This proof was not effective. If one wants an estimate of the degree $d$ in terms of
the weights that occur in $M$, there are several ways to go about it.
One way is to use the same definition of the Steinberg module $\St$ when the residue characteristic $p$ of the local ring $\k$
is large as when $p$ is zero.
Indeed say $r$ is so large that $r\rho+\mu$ is dominant for every
weight $\mu$ of $M$, and $p$ is so large that $r\rho$ lies in the bottom alcove for
the corresponding affine Weyl group $W_p$. Then $ \ind_{B}^{G}(r\rho)$
is self-dual \cite[II, 2.5, 5.6]{J} and may thus play the part of $\St$.
Note that one finds a degree $d$ that is a power of $r$ rather than a power of
$p$.
Assuming $r$ prime to $p$ (is this automatic? Increase $r$ by one if needed.) one learns
that in fact degree $d=1$ must work, by the proof of
Proposition \ref{co t:Prop}.

There is a reason that $d=1$ works here.
If $p=0$ or if $p$ is large with respect to the weights of $N$, one may also show
that the conclusion of 
Proposition 
{\ref{ext cancel:Pro}} holds for $r=0$, corresponding with $d=1$.
The reason is that the relevant $\Delta_{\mu_\Z}\otimes\Z_{(p)}$
are isomorphic with
$\nabla_{\mu_\Z}\otimes\Z_{(p)}$ and thus have vanishing cohomology.
This can then be used as in the proof of Proposition 
{\ref{ext cancel:Pro}}.
But do we care enough to tell the reader?
\end{comment}
\begin{commentVF}
But why would we emphasize large primes ?
\end{commentVF}
\subsection{Reduction to simply connected group schemes}\label{reduction}
Let $Z_\Z$ be the center of $G_\Z$ and let $Z$ be the corresponding subgroup of $G$. 
It is a diagonalisable group scheme, so $M^Z\to L$ is also surjective. 
We may replace $M$ with $M^Z$ and $G$ with $G/Z$, in view of the general formula $M^G=(M^Z)^{G/Z}$, see \cite[I 6.8(3)]{J}. 
So now $G$ has become semisimple, but of adjoint type rather than simply connected type. 
So choose a simply connected Chevalley group scheme $\tilde G_\Z$ with center $\tilde Z_\Z$ so that $\tilde G_\Z/\tilde Z_\Z=G_\Z$. 
We may now replace $G$ with $\tilde G$. 
\begin{Rem}
Other reductions are possible, to enlarge the supply of power reductive algebraic groups. 
For instance, if $G$ has a normal subgroup $N$ so that both $N$ and $G/N$ are power reductive, then so is $G$
(for a proof, use Remark \ref{pow inv:rem}). 
And if $\k\to R$ is a faithfully flat extension so that $G_R $ is power reductive,
then $G$ is already power reductive.
So twisted forms are allowed, compare the discussion in \cite[p. 239]{Se}.
\end{Rem}
\begin{comment}Preceding remark added. It may not be the right place.
One should also prove it. How? For the $G$, $N$ statement
use as in Proposition \ref{power:Pro} the formulation that, for a (power?)
surjective $G$-map $\phi:A\to B$ of $\k$-algebras, the induced map on invariants $A^G\to B^G$ is power-surjective.
Note that a finite group is power reductive because it satisfies (Int). 
Should we say that? More generally, finite generation implies power reductivity.
We have to decide where to put things. 
The equivalence between (Int) and power reductivity works over any base.
\end{comment}
%
\section{Generalities}\label{generalities:section}
This section collects known results over an arbitrary base, their proof, and correct proofs of known results over fields, for use in the other sections. 
The part up to subsection \ref{Grosshans notations:subsection} is used, and referred to, in the previous section.
\subsection{Notations}
Throughout this paper, we let $G$ be a semisimple Chevalley group scheme over the commutative ring $\k$.
We keep the notations of Section \ref{notations}. In particular, the standard parabolic $B$ has negative roots. 
Its standard torus is $T$, its unipotent radical is $U$. The opposite Borel $B^+$ has positive roots and its unipotent radical is $U^+$.
For a standard parabolic subgroup $P$ its unipotent radical is $R_u(P)$.
For a weight $\lambda$ in $X(T)$,
$\nabla_{\lambda}=\ind_B^G\lambda$ and $\Delta_{\lambda}=\ind_{B^+}^G(-\lambda)^{\#}$.
\subsection{}
We first recall base change for costandard modules.
\begin{Pro}\label{base change:Pro}
Let $\lambda$ be a weight, and denote also by $\lambda=\lambda_{\Z}\ot\k$ the $B$-module $\k$ with action by $\lambda$. For any ring $\k$, there is a natural isomorphism:
\[
\ind_{B_\Z}^{G_\Z}\lambda_\Z\ot \k\cong\ind_B^G\lambda
\]
In particular, $\ind_B^G\lambda$ is nonzero if and only if $\lambda$ is dominant.
\end{Pro}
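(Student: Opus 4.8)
The plan is to descend to the integral form, where Kempf vanishing is available, and then transport back by the universal coefficient theorem. First recall that $\ind_B^G\lambda$, together with its higher derived functors $R^i\ind_B^G\lambda$, is computed functorially in $\k$ by a complex $C^\bullet(\k)$ obtained by base change from a fixed complex $C^\bullet(\Z)$ of $\Z$-free $G_\Z$-modules: for instance the \v{C}ech complex of the line bundle $\mathcal{L}(\lambda)$ on the flat projective $\Z$-scheme $G_\Z/B_\Z$, or a Hochschild-type complex whose terms are built from the $\Z$-free Hopf algebras $\k[G_\Z]$, $\k[B_\Z]$ and the $\Z$-free weight module $\lambda_\Z$. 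In either description the terms are $\Z$-free and $C^\bullet(\Z)\ot_\Z\k=C^\bullet(\k)$ computes $R^\bullet\ind_B^G\lambda$ over $\k$ (see \cite[I.4]{J}).

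Assume first that $\lambda$ is dominant. Over $\Z$ one has Kempf vanishing, $R^i\ind_{B_\Z}^{G_\Z}\lambda_\Z=0$ for $i>0$, while $\ind_{B_\Z}^{G_\Z}\lambda_\Z$ is $\Z$-free; both facts are quoted from \cite[II Ch. 8]{J}. Hence every cohomology module of $C^\bullet(\Z)$ is $\Z$-free --- zero in positive degrees, $\Z$-free in degree $0$ --- so the universal coefficient theorem \cite[A.X.4.7]{B} (see also \cite[I.4.18]{J}) yields $\h^0(C^\bullet(\Z)\ot\k)\cong\h^0(C^\bullet(\Z))\ot\k$, that is, $\ind_B^G\lambda\cong\ind_{B_\Z}^{G_\Z}\lambda_\Z\ot\k$. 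The isomorphism is $G$-equivariant, being induced by a morphism of complexes of $G$-modules. Equivalently one may argue geometrically: $\mathcal{L}(\lambda)$ is $\Z$-flat with vanishing higher cohomology over $\Z$, so by Grothendieck's theorem on cohomology and base change the formation of the direct image of $\mathcal{L}(\lambda)$ along $G_\Z/B_\Z\to\Spec\Z$ commutes with $\Z\to\k$.

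Now suppose $\lambda$ is not dominant; then both sides vanish and the isomorphism reads $0\cong 0$. Choose a simple root $\alpha$ with $\langle\lambda,\alpha^\vee\rangle<0$ and use transitivity of induction, $\ind_B^G=\ind_{P_\alpha}^G\circ\ind_B^{P_\alpha}$, for the minimal parabolic $P_\alpha\supset B$. Since $P_\alpha/B\cong\P^1_\k$ and $\mathcal{L}(\lambda)$ restricts there to a line bundle of negative degree, $\ind_B^{P_\alpha}\lambda=\h^0(\P^1_\k,\mathcal{O}(d))=0$ for that $d<0$, over every ring $\k$; hence $\ind_B^G\lambda=\ind_{P_\alpha}^G(0)=0$, and likewise over $\Z$. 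Finally, for the last assertion: when $\lambda$ is dominant the isomorphism above exhibits $\ind_B^G\lambda$ as $\ind_{B_\Z}^{G_\Z}\lambda_\Z\ot\k$, a free $\k$-module of rank $\dim_\Q\ind_{B_\Q}^{G_\Q}\lambda_\Q$, which is at least $1$ by the Weyl dimension formula; so $\ind_B^G\lambda\neq 0$ for every nonzero $\k$, while it vanishes when $\lambda$ is not dominant.

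I expect the only genuinely substantial inputs to be Kempf vanishing over $\Z$ and the $\Z$-freeness of $\ind_{B_\Z}^{G_\Z}\lambda_\Z$; granting these from \cite[II Ch. 8]{J}, everything else is the formal base-change machinery for a complex of flat modules with flat cohomology. The two points that need a little care are that the comparison map really is $G$-equivariant, and that the universal coefficient statement is applied to a complex whose cohomology is flat in \emph{every} degree --- which is exactly what Kempf vanishing provides.
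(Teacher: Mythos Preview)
Your proof is correct. For dominant $\lambda$ it coincides with the paper's: both invoke Kempf vanishing over $\Z$ and then the universal coefficient theorem. You use full vanishing $R^i\ind_{B_\Z}^{G_\Z}\lambda_\Z=0$ for all $i>0$, whereas the paper only needs $R^1\ind_{B_\Z}^{G_\Z}\lambda_\Z=0$ to kill the $\tor_1$ term in the degree-$0$ universal coefficient sequence; but this is a harmless over-citation, not a gap.

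The non-dominant case is where the two arguments diverge. The paper first cites vanishing of $\ind_B^G\lambda$ over fields \cite[II.2.6]{J}, then runs the universal coefficient theorem with $\k=\Q$ and $\k=\F_p$ to deduce that $\ind_{B_\Z}^{G_\Z}\lambda_\Z=0$ and that $R^1\ind_{B_\Z}^{G_\Z}\lambda_\Z$ is torsion-free; one more application of UCT over an arbitrary $\k$ then gives $\ind_B^G\lambda=0$. Your route through transitivity of induction via a minimal parabolic $P_\alpha$ and the elementary vanishing $\h^0(\P^1_\k,\mathcal{O}(d))=0$ for $d<0$ is more direct and works uniformly over every base in one step, without the detour through fields. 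The paper's route, on the other hand, keeps the toolkit minimal --- it uses only the single black box \cite[II.2.6]{J} and the same UCT machinery already in play --- and avoids having to check the identification $P_\alpha/B\cong\P^1_\k$ and the precise degree of the restricted line bundle over a general base.
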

\begin{proof}
First consider the case when $\lambda$ is not dominant. 
Then $\ind_B^G\lambda$ vanishes when $\k$ is a field \cite[II.2.6]{J}, so both $\ind_{B_\Z}^{G_\Z}\lambda_\Z$ and the torsion in $R^1\ind_{B_\Z}^{G_\Z}\lambda_\Z$ must vanish. Then $\ind_B^G\lambda$ vanishes as well for a general $\k$ by the universal coefficient theorem.
\par
In the case when $\lambda$ is dominant, $R^1\ind_{B_\Z}^{G_\Z}\lambda_\Z$ vanishes by Kempf's theorem \cite[II 8.8(2)]{J}. Thus, by \cite[I.4.18b)]{J}: 
$\ind_{B_\Z}^{G_\Z}\lambda_\Z\ot \k\cong\ind_B^G\lambda$.
\end{proof}
\begin{Pro}[Tensor identity for weights]\label{gti:Pro}
Let $\lambda$ be a weight, and denote again by $\lambda$ the $B$-module $\k$ with action by $\lambda$. 
Let $N$ be a $G$-module. 
There is a natural isomorphism:
\[
\ind_B^G(\lambda\ot N)\cong (\ind_B^G \lambda)\ot N .
\]
\end{Pro}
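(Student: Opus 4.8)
The statement is the classical \emph{tensor identity} (projection formula) for the induction functor $\ind_B^G$, specialized to the case where the induced module is a one‑dimensional $B$‑module $\lambda$: over a field this is \cite[I.3.6]{J}, and the argument given there is formal enough to go through over an arbitrary commutative base. I would present the proof as follows. Recall that $\ind_B^G(W)=(W\otimes\k[G])^B$, where $B$ acts on $\k[G]$ through (the comodule attached to) right translation and diagonally on the tensor product, while the residual $G$‑action making $\ind_B^G(W)$ a $G$‑module comes from left translation on $\k[G]$, which commutes with the $B$‑action. The whole point is an \emph{untwisting} isomorphism: from the coaction $\sigma_N\colon N\to N\otimes\k[G]$, the multiplication of $\k[G]$ and the antipode one builds a $\k$‑linear automorphism $\theta$ of $N\otimes\k[G]$ which (i) commutes with the left‑translation $G$‑action on the $\k[G]$‑factor, and (ii) carries the diagonal $B$‑action (the given action on $N$, right translation on $\k[G]$) to the action that is trivial on the $N$‑factor.

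Granting this, applying $\theta$ inside the fixed points gives a chain of $G$‑equivariant isomorphisms
\[
\ind_B^G(\lambda\otimes N)=(\lambda\otimes N\otimes\k[G])^B \;\xrightarrow[\ \sim\ ]{\ \id_\lambda\otimes\theta\ }\; (\lambda\otimes N_{\triv}\otimes\k[G])^B \;=\; N\otimes(\lambda\otimes\k[G])^B \;=\; N\otimes\nabla_\lambda,
\]
and naturality in $N$ is immediate from the formulas. In practice one does not even separate these steps: one writes down directly the map $\nabla_\lambda\otimes N\to\ind_B^G(\lambda\otimes N)$ and its inverse in terms of $\sigma_N$ and the antipode, and checks, using coassociativity, the counit identity and the antipode identities, that the image of each lands in the appropriate space of $B$‑fixed vectors and that the two composites are the identity. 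Since $\lambda=\k$ as a $\k$‑module, there is no difficulty specializing to a character.

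The one point that genuinely needs care — and the reason this deserves a line of proof in a section on generalities over an arbitrary base — is that the construction must stay purely Hopf‑algebraic (coassociativity, counit, antipode for the commutative Hopf algebras $\k[G]$ and $\k[B]$, and the fact that the restriction $\k[G]\to\k[B]$ is a morphism of Hopf algebras). Because the isomorphism is exhibited by honest mutually inverse $\k$‑linear formulas, no flatness of $N$ over $\k$ is used anywhere; one must in particular resist proving the identity by dévissage from $\k=\Z$ or from a field, since reducing a general $N$ to a flat one is exactly the place where $\k$‑flatness would creep back in. (If one only wants the case $\lambda$ dominant, which is what Proposition~\ref{ext cancel:Pro} needs, there is the alternative of transporting $\theta$ to an isomorphism between the Hochschild complex computing $\ind_B^G(\lambda\otimes N)$ and $C(G_\Z,\ind_{B_\Z}^{G_\Z}\lambda_\Z)\otimes_\Z N$ and invoking base change together with Kempf vanishing, which makes the complex a resolution by $\Z$‑free modules with $\Z$‑free $\h^0$ and so sidesteps flatness altogether; but the explicit‑map proof covers all weights uniformly.) I expect the only real work to be fixing the translation and antipode conventions so that they agree with those of Section~\ref{notations}, after which properties (i) and (ii) of $\theta$ are a routine Sweedler‑calculus verification.
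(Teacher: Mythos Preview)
Your untwisting automorphism $\theta$ is the same as the paper's map $\psi$, and the passage
\[
(\lambda\otimes N\otimes\k[G])^B\;\xrightarrow{\sim}\;(\lambda\otimes N_{\triv}\otimes\k[G])^B
\]
is indeed purely formal Hopf algebra. The gap is the next equality, which you write as
\[
(\lambda\otimes N_{\triv}\otimes\k[G])^B \;=\; N\otimes(\lambda\otimes\k[G])^B.
\]
This is \emph{not} a formal identity: pulling a trivial module $N$ past $B$-invariants is a universal-coefficient question, and in general
\[
H^0(B,\,N\otimes W)\;\big/\;N\otimes H^0(B,W)\;\cong\;\tor_1^{\k}\bigl(N,\,H^1(B,W)\bigr)
\]
for $W$ flat over $\k$. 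With $W=\lambda\otimes\k[G]$ the obstruction is $\tor_1(N,R^1\ind_B^G\lambda)$, and this does \emph{not} vanish for free. The paper supplies exactly the missing input: it reduces to $\k=\Z$ and invokes Proposition~\ref{base change:Pro} to get that $R^1\ind_{B_\Z}^{G_\Z}\lambda_\Z$ is torsion-free (Kempf vanishing for dominant $\lambda$, and zero costandard module in the non-dominant case), so that the Tor term dies and the universal coefficient theorem gives the desired identification.

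In fact the paper's Remark immediately after the Proposition warns that the general tensor identity $\ind_B^G(M\otimes N)\cong(\ind_B^G M)\otimes N$ \emph{fails} for non-flat $N$ (it would force $R^1\ind_B^G M$ to be torsion-free for every $M$, contradicting known computations). So your claim that the argument ``is formal enough to go through over an arbitrary base'' and that ``no flatness of $N$ over $\k$ is used anywhere'' is precisely what breaks: the result is true for the one-dimensional $\lambda$ only because of the special feature that $R^1\ind\lambda$ has no torsion. Ironically, the d\'evissage-style argument you advise resisting is the one the paper actually uses, and the parenthetical alternative you mention (base change plus Kempf vanishing) is essentially the needed ingredient, not an optional detour.
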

\begin{Rem}
The case when $N$ is $\k$-flat is covered by \cite[I.4.8]{J}. We warn the reader against Proposition I.3.6 in the 1987 first edition of the book.
Indeed, suppose we always had\break
$\ind_B^G(M\ot N)\cong (\ind_B^G M)\ot N$. 
Take $\k=\Z$ and $N=\Z/p\Z$. The universal
coefficient theorem would then imply that $R^1\ind_B^G M$ never has torsion.
Thus $R^i\ind_B^G M$ would never have torsion for positive $i$.
It would make \cite[Cor. 2.7]{A} contradict the Borel--Weil--Bott theorem.
\end{Rem}
\begin{commentVF}
add a counterexample
\end{commentVF}
\begin{proof}
Recall that for a $B$-module $M$ one may define $\ind_B^G(M)$ as $(\k[G]\ot M)^B$, where
$\k[G]\ot M$ is viewed as a $G\x B$-module with $G$ acting by left translation on $\k[G]$,
$B$ acting by right translation on $\k[G]$, and $B$ acting the given way on $M$.
Let $N_\triv$ denote $N$ with trivial $B$ action.
There is a $B$-module isomorphism $\psi:\k[G]\ot\lambda\ot N\to \k[G]\ot\lambda\ot N_\triv$,
given in non-functorial notation by: 
\[
\psi(f\ot1\ot n): x\mapsto f(x)\ot1\ot xn
.\]
So $\psi$ is obtained by first applying the comultiplication $N\to \k[G]\ot N$, then the
multiplication $\k[G]\ot \k[G]\to \k[G]$.
It sends $(\k[G]\ot\lambda\ot N)^B$ to 
$(\k[G]\ot\lambda\ot N_\triv)^B=(\Z[G_\Z]\ot_\Z \lambda_\Z \ot_\Z N_\triv)^B$.
Now recall from the proof of Proposition \ref{base change:Pro} that the torsion in $R^1\ind_{B_\Z}^{G_\Z}\lambda_\Z$ vanishes.
By the universal coefficient theorem we get 
that $(\Z[G_\Z]\ot_\Z \lambda_\Z \ot_\Z N_\triv)^B$ equals $(\k[G]\ot \lambda)^B \ot N_\triv$. 
To make these maps into $G$-module maps, one must use the given $G$-action on $N$ as the action
on $N_\triv$. So $B$ acts on $N$, but not $N_\triv$, and for $G$ it is the other way around.
One sees that $(\k[G]\ot \lambda)^B \ot N_\triv$ is just $(\ind_B^G \lambda)\ot N$.
\end{proof}
\begin{Pro}\label{dominant hw:Pro}
For a $G$-module $M$, there are only dominant weights in $M^{U^+}$.
\end{Pro}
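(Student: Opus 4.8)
The plan is to reduce to the case $\k=\Z$ and then to a field, where the statement is classical. First I would observe that the formation of $M^{U^+}$ commutes with base change along $\k\to\k'$ when $M$ is obtained by base change, since $U^+$ is flat and the relevant Hochschild complex behaves well; more to the point, the weight-space decomposition of $M$ with respect to the torus $T$ is compatible with base change, so it suffices to show that if a weight $\mu\in X(T)$ is \emph{not} dominant, then the $\mu$-weight space of $M^{U^+}$ vanishes. Fixing such a $\mu$, I would like to argue directly at the level of the single weight space, using only the $B^+$-module structure.

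The key point is this: a $T$-semiinvariant vector $v\in M$ of weight $\mu$ that is also fixed by $U^+$ is precisely a $B^+$-module map $(-\mu)^\# \to M$, equivalently (by Frobenius reciprocity, \cite[I.3.4]{J}) a $G$-module map $\ind_{B^+}^G(-\mu)^\# \to M$, i.e.\ a map $\Delta_\mu\to M$. Now if $\mu$ is not dominant, then $\Delta_\mu=\ind_{B^+}^G(-\mu)^\#$ vanishes: this is the analogue of Proposition \ref{base change:Pro} for $B^+$ in place of $B$ (apply that proposition after conjugating by the longest Weyl group element, which interchanges $B$ and $B^+$ and sends $\lambda$ to a Weyl-conjugate), so $\ind_{B^+}^G(-\mu)^\#$ is nonzero only when $-\mu$ is antidominant, i.e.\ when $\mu$ is dominant. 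Hence for non-dominant $\mu$ there are no nonzero $B^+$-maps $(-\mu)^\#\to M$, which says exactly that the $\mu$-weight space of $M^{U^+}$ is zero.

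The one subtlety to check — and the step I expect to be the main obstacle — is the passage from "$v$ is $U^+$-fixed of weight $\mu$" to "$v$ spans a $B^+$-submodule isomorphic to $(-\mu)^\#$," and then the application of Frobenius reciprocity over an arbitrary ring $\k$. The first half is elementary: $B^+=T\ltimes U^+$, and a vector killed by $U^+$ and scaled by $\mu$ under $T$ generates a trivial-on-$U^+$, weight-$\mu$ line, which is the $B^+$-module $\mu$; dualizing is harmless on a rank-one free module. For Frobenius reciprocity and the identification $\hom_{B^+}((-\mu)^\#,M)\cong\hom_{B^+}(\k_{-\mu}, M)^{\vee}$-type manipulations one only needs left-exactness of induction and the adjunction in \cite[I.3.4]{J}, both valid over any $\k$. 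Alternatively, and perhaps more cleanly, I would avoid duals entirely: a $U^+$-fixed weight-$\mu$ vector is a $B^+$-map $\mu\to M|_{B^+}$, and by the adjunction between $\mathrm{res}$ and $\ind_{B^+}^G$ together with the vanishing $\ind_{B^+}^G\mu=0$ for non-dominant... — here one must be slightly careful about which "induction" (the affine one, sections of a line bundle) is meant, so I would phrase everything through $(\k[G]\ot\mu)^{B^+}$ as in the proof of Proposition \ref{gti:Pro} and invoke Proposition \ref{base change:Pro} (transported to $B^+$) to get the vanishing. Once the non-dominant weight spaces are killed, the proposition follows.
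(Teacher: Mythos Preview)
Your strategy is the same as the paper's---reduce the weight-$\mu$ part of $M^{U^+}$ to a $\hom_{B^+}$, then kill it via Frobenius reciprocity, the tensor identity (Proposition~\ref{gti:Pro}), and the vanishing in Proposition~\ref{base change:Pro}. But the execution has two genuine slips, both about the direction of the adjunction.

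First, the step ``$\hom_{B^+}((-\mu)^\#,M)\cong\hom_G(\Delta_\mu,M)$ by Frobenius reciprocity \cite[I.3.4]{J}'' is not what that reference gives you. There $\ind_{B^+}^G$ is \emph{right} adjoint to restriction: $\hom_G(N,\ind_{B^+}^G V)=\hom_{B^+}(N|_{B^+},V)$. This produces maps \emph{into} an induced module, not maps \emph{out of} one; it cannot turn $\hom_{B^+}(\mu,M)$ into $\hom_G(\text{something},M)$ directly. The identity $\hom_G(\Delta_\mu,M)=\hom_{B^+}(\mu,M)$ is exactly Proposition~\ref{universal Weyl:Pro}, whose proof already requires the tensor identity---so you are not bypassing anything by citing $\Delta_\mu$.

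Second, in your alternative route you write ``the vanishing $\ind_{B^+}^G\mu=0$ for non-dominant $\mu$''. This is false: transported to $B^+$, Proposition~\ref{base change:Pro} says $\ind_{B^+}^G\nu\neq 0$ iff $\nu$ is \emph{anti}dominant, so for instance $\ind_{B^+}^G(-\rho)\neq 0$ although $-\rho$ is not dominant. The weight you must feed in is $-\mu$, not $\mu$.

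The paper's one-line fix, which you are circling, is to twist first so that the source becomes the trivial $G$-module $\k$, and only then apply the (correct direction of) Frobenius reciprocity:
\[
\hom_{B^+}(\mu,M)=\hom_{B^+}(\k,-\mu\otimes M)=\hom_G\bigl(\k,\ind_{B^+}^G(-\mu\otimes M)\bigr)=\hom_G\bigl(\k,(\ind_{B^+}^G(-\mu))\otimes M\bigr),
\]
the last equality being Proposition~\ref{gti:Pro}. Now $\ind_{B^+}^G(-\mu)=0$ because $-\mu$ is not antidominant when $\mu$ is not dominant. (The paper does the symmetric computation with $B$ in place of $B^+$, showing $\hom_B(-\lambda,M)=0$; this is the same argument after conjugating by $w_0$.) Once you insert this twist, your proof is complete and coincides with the paper's.
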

\begin{proof}
Let $\lambda$ be a nondominant weight. Instead we show that $-\lambda$ is no weight of $M^{U}$,
or that $\hom_B(-\lambda,M)$ vanishes. 
By the tensor identity of Proposition \ref{gti:Pro}:
$
\hom_B(-\lambda,M)=\hom_B(k,\lambda\ot M)=\hom_G(k,\ind_B^G(\lambda\ot M))=\hom_G(k,\ind_B^G\lambda\ot M)
$
which vanishes by Proposition \ref{base change:Pro}.
\end{proof}
\begin{Pro}\label{evaluation:Pro}
Let $\lambda$ be a dominant weight. The restriction (or evaluation) map 
$\ind_B^G\lambda\to\lambda$ to the weight space of weight $\lambda$ is a $T$-module isomorphism.
\end{Pro}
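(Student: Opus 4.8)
The plan is to reduce to the base ring $\Z$ and there to assemble the well-known field cases. Realise $\ind_B^G\lambda=(\k[G]\ot\lambda)^B$, so that the evaluation map in question is $e\colon f\ot v\mapsto f(1)v$. Using the right $B$-equivariance of $f$ to rewrite $f(t^{-1})$ as $\lambda(t)f(1)$, one checks that $e$ is $T$-equivariant onto the one-dimensional module $\lambda$; since a $T$-equivariant map from a weight space of weight $\mu$ to a module of weight $\lambda$ vanishes unless $\mu=\lambda$, the map $e$ kills every weight space of $\ind_B^G\lambda$ other than the one of weight $\lambda$, and the assertion to be proved is precisely that $e$ restricts to an isomorphism $(\ind_B^G\lambda)_\lambda\to\lambda$.

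Next I would pass to $\Z$. By Proposition \ref{base change:Pro}, $\ind_B^G\lambda\cong\nabla_{\lambda,\Z}\ot_\Z\k$ as $G$-modules, hence as $T$-modules; since a module over the diagonalisable group $T$ is the direct sum of its weight spaces and this decomposition commutes with $\ot_\Z\k$, one gets $(\ind_B^G\lambda)_\lambda\cong(\nabla_{\lambda,\Z})_\lambda\ot_\Z\k$, and the map $e$ over $\k$ is obtained from the corresponding map over $\Z$ by applying $\ot_\Z\k$. It therefore suffices to prove that $e_\Z\colon(\nabla_{\lambda,\Z})_\lambda\to\lambda_\Z=\Z$ is an isomorphism. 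Here $\nabla_{\lambda,\Z}$ is $\Z$-free \cite[II Ch. 8]{J}, so its direct summand $(\nabla_{\lambda,\Z})_\lambda$ is free, of rank $\dim_\Q(\nabla_\lambda^\Q)_\lambda=1$, since over a characteristic-zero field $\nabla_\lambda$ is the irreducible module of highest weight $\lambda$, with one-dimensional highest weight space. Thus, after choosing generators, $e_\Z$ is multiplication by some integer $n\ge 0$.

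It remains to show $n=\pm1$. Tensoring $e_\Z$ with $\F_p$ and invoking Proposition \ref{base change:Pro} once more, we obtain the evaluation map $(\nabla_\lambda)_\lambda\to\F_p$ for the Chevalley group over $\F_p$, which is an isomorphism by the classical theory of costandard modules over a field \cite[II.2]{J}; in particular it is nonzero, so $p\nmid n$. As this holds for every prime $p$, we get $n=\pm1$, so $e_\Z$ is an isomorphism, and tensoring with $\k$ completes the proof. The only point requiring care is the bookkeeping that the weight-space decomposition and the evaluation map genuinely commute with the base change $\ot_\Z\k$; the substantive input is the field case, which is quoted rather than reproved, so there is no real obstacle here.
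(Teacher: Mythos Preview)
Your proof is correct and follows essentially the same strategy as the paper: reduce to $\Z$ via the base change Proposition~\ref{base change:Pro}, then deduce the integral statement from the field cases. The paper packages the passage from fields to $\Z$ as a universal coefficient argument on the two-term complex $\ind_{B_\Z}^{G_\Z}\lambda_\Z\to\lambda_\Z\to0$ and cites Ramanathan for the positive characteristic field case, whereas you unpack this explicitly---using the $\Q$-case to see that $(\nabla_{\lambda,\Z})_\lambda$ is free of rank one and then the $\F_p$-cases to force the integer $n$ to be a unit---but the content is the same.
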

\begin{proof}
Over fields of positive characteristic this is a result of Ramanathan \cite[A.2.6]{vdK book}.
It then follows over $\Z$ by the universal coefficient theorem applied to the complex
$\ind_{B_\Z}^{G_\Z}\lambda_\Z\to \lambda_\Z\to 0$.
For a general $\k$, apply proposition \ref{base change:Pro}.
\end{proof}
\begin{Pro}[Universal property of Weyl modules]\label{universal Weyl:Pro}
Let $\lambda$ be a dominant weight.
For any $G$-module $M$, there is a natural isomorphism
\[
\hom_G(\Delta_\lambda,M)\cong \hom_{B^+}(\lambda,M).
\]
In particular, if $M$ has highest weight $\lambda$,
then there is a natural map from $\Delta_{\lambda_{\Z}}\ot_{\Z}M_{\lambda}$ to $M$, 
its kernel has lower weights, and $\lambda$ is not a weight of its cokernel.
\end{Pro}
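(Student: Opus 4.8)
The plan is to obtain the adjunction directly from the definition $\Delta_\lambda=(\ind_{B^+}^G(-\lambda))^\#$, combining a duality argument with the tensor identity and Frobenius reciprocity, and using throughout only the forms of these facts that are valid over an arbitrary base (and for a possibly non-$\k$-flat module $M$). Write $W:=\ind_{B^+}^G(-\lambda)$. Since $\lambda$ is dominant, $-\lambda$ is antidominant, and $W$ is $\k$-free of finite rank: over $\Z$ this is Kempf vanishing together with Borel--Weil, and for a general $\k$ it follows by the $B^+$-analogue of Proposition~\ref{base change:Pro}, obtained by rerunning its proof with $B$ replaced by $B^+$. Hence $\Delta_\lambda=W^\#$ is $\k$-free of finite rank and $\Delta_\lambda^\#=W$, so there is a natural isomorphism of $G$-modules $\hom_\k(\Delta_\lambda,M)\cong\Delta_\lambda^\#\ot_\k M=W\ot_\k M$; taking $G$-invariants gives $\hom_G(\Delta_\lambda,M)\cong(W\ot_\k M)^G$.

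Next I would apply the tensor identity for $B^+$ --- Proposition~\ref{gti:Pro} with $B$ replaced by $B^+$ and with the one-dimensional module $-\lambda$ --- to rewrite $W\ot_\k M=(\ind_{B^+}^G(-\lambda))\ot M\cong\ind_{B^+}^G((-\lambda)\ot M)$, and then Frobenius reciprocity (adjointness of induction to restriction, \cite[I.3.4]{J}):
\begin{align*}
(W\ot_\k M)^G&\cong\big(\ind_{B^+}^G((-\lambda)\ot M)\big)^G=\hom_G(\k,\ind_{B^+}^G((-\lambda)\ot M))\\
&\cong\hom_{B^+}(\k,(-\lambda)\ot M)\cong\hom_{B^+}(\lambda,M),
\end{align*}
the last step being the evident twist by $\lambda$. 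Composing the natural isomorphisms of these two paragraphs proves the first assertion. The one delicate point is that one must invoke the constrained tensor identity of Proposition~\ref{gti:Pro}, not the false unrestricted form warned against in the remark following it; this is precisely why no flatness hypothesis on $M$ is needed.

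For the ``in particular'' statement, suppose $\lambda$ is the highest weight of $M$. Since no weight of $M$ exceeds $\lambda$, the weight space $M_\lambda$ is fixed by $U^+$, so a $B^+$-map $\lambda\to M$ is exactly a $U^+$-fixed vector of weight $\lambda$, whence $\hom_{B^+}(\lambda,M)=M_\lambda$ as $\k$-modules. Applying the adjunction elementwise, each $m\in M_\lambda$ yields a unique $G$-map $\Delta_\lambda\to M$ carrying the highest weight vector $v_\lambda$ --- a $\k$-generator of the line $(\Delta_\lambda)_\lambda$, which is $\k$-free of rank one by Proposition~\ref{evaluation:Pro} applied via $B^+$ --- to $m$; by uniqueness these assemble into a canonical $G$-map $\Delta_\lambda\ot_\k M_\lambda\to M$, which is $\Delta_{\lambda_\Z}\ot_\Z M_\lambda\to M$ because $\Delta_\lambda=\Delta_{\lambda_\Z}\ot_\Z\k$. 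By construction it sends $v_\lambda\ot m\mapsto m$, hence restricts on $\lambda$-weight spaces to the inclusion $M_\lambda\hookrightarrow M$; so it is an isomorphism in weight $\lambda$, $\lambda$ is not a weight of the cokernel, and --- since all weights of $\Delta_\lambda\ot_\k M_\lambda$ are $\leq\lambda$ while the kernel meets the $\lambda$-weight space trivially --- the kernel has only weights strictly below $\lambda$.

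The argument is essentially formal; the real work is assembling the right integral ingredients and base-changing them --- the $B^+$-versions of Propositions~\ref{base change:Pro}, \ref{gti:Pro} and \ref{evaluation:Pro}, together with the $\Z$-freeness of $\Delta_{\lambda_\Z}$ and of $W_\Z=\ind_{B^+_\Z}^{G_\Z}(-\lambda)_\Z$ --- after which no step presents a serious obstacle.
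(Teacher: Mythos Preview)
Your proof is correct and follows essentially the same route as the paper's: identify $\hom_G(\Delta_\lambda,M)$ with $(\ind_{B^+}^G(-\lambda)\ot M)^G$ via the $\k$-freeness of $\ind_{B^+}^G(-\lambda)$, then apply the tensor identity (Proposition~\ref{gti:Pro}) and Frobenius reciprocity, and deduce the second part from Proposition~\ref{evaluation:Pro}. The paper's version is simply terser, leaving the freeness step and the $B^+$-analogues implicit where you spell them out.
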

\begin{proof}
By the tensor identity Proposition \ref{gti:Pro}:
$\ind_{B^+}^G(-\lambda\ot M)\cong\ind_{B^+}^G(-\lambda)\ot M$.
Thus
$
\hom_G(\Delta_\lambda,M)=\hom_G(\k,\ind_{B^+}^G(-\lambda)\ot M)=
\hom_{B^+}(\k,-\lambda\ot M)=\hom_{B^+}(\lambda,M).
$
If $M$ has highest weight $\lambda$, then $M_\lambda=\hom_{B^+}(\lambda,M)$.
Tracing the maps, the second part follows from Proposition \ref{evaluation:Pro}.
\end{proof}
\subsection{Notations}\label{Grosshans notations:subsection}
We now recall the notations from \cite[\S 2.2]{vdK coh}. 
Let the \emph{Grosshans height function} $\hgt:X(T)\to \Z$ be defined by:
\[
\hgt \gamma=\sum_{\alpha>0}\langle \gamma,\alpha^\vee\rangle
.\]
For a $G$-module $M$, let $M_{\leq i}$ denote the largest $G$-submodule with weights $\lambda$ that all satisfy: $\hgt \lambda\leq i$. Similarly define $M_{<i}=M_{\leq i-1}$.
For instance, $M_{\leq0}=M^G$.
We call the filtration 
$$
0\subseteq M_{\leq0}\subseteq M_{\leq1}\cdots
$$
the \emph{Grosshans filtration}, and we call its associated graded the \emph{Grosshans graded} $\gr M$ of $M$. 
We put: $\hull_\nabla(\gr M)=\ind_{B}^GM^{U^+}.$
\par
Let $A$ be a commutative $\k$-algebra on which $G$ acts rationally through $\k$-algebra automorphisms.
The Grosshans graded algebra $\gr A$ is given in degree $i$ by: 
\[
\gr_i A=A_{\leq i}/A_{< i}.
\]
\subsection{Erratum}\label{correcting the book}
When $\k$ is a field, one knows that $\gr A$ embeds in a good filtration hull, which Grosshans calls $R$ in \cite{Grosshans contr}, and which we call $\hull_\nabla(\gr A)$:
$$\hull_\nabla(\gr A)=\ind_{B}^GA^{U^+}.$$
When $\k$ is a field of positive characteristic $p$, it was shown by Mathieu \cite[Key Lemma 3.4]{Mathieu G} that this inclusion is power-surjective: for every $b\in \hull_\nabla(\gr A)$, there is an $r$ so that $b^{p^r}$ lies in the subalgebra $\gr A$. 
\par
This result's exposition in \cite[Lemma 2.3]{vdK coh} relies on \cite[Sublemma A.5.1]{vdK book}.
Frank Grosshans has pointed out that the proof of this sublemma 
is not convincing beyond the reduction to the affine case. 
Later A.~J.~de Jong actually gave a counterexample to the reasoning. 
The result itself is correct and has been used by others. 
As power surjectivity is a main theme in this paper, we take the opportunity to give a corrected treatment.
Mathieu's result will be generalized to an arbitrary base $\k$ in Section \ref{Grosshans:section}.
\begin{Pro}Let $\k$ be an algebraically closed field of characteristic $p>0$. 
Let both $A$ and $B$ be commutative $\k$-algebras of finite type over $\k$, with $B$ finite over $A$. 
Put $Y=\Spec(A)$, $X=\Spec(B)$. Assume $X\to Y$ gives a bijection between $\k$ valued points. 
Then for all $b\in B$ there is an $m$ with $b^{p^m}\in A$.
\end{Pro}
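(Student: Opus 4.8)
The plan is to reduce the statement to a purely algebraic fact about finite ring extensions after passing to the reduced rings, and then exploit the characteristic $p$ Frobenius. First I would observe that we may replace $B$ by $B_{\mathrm{red}}$ and $A$ by its image $A_{\mathrm{red}}$ in $B_{\mathrm{red}}$: since for any nilpotent $b$ we trivially have $b^{p^m}=0\in A$ for $m$ large, it suffices to treat a given $b\in B$ modulo the nilradical, and a $p$-th power of a lift of $b^{p^m}\in A_{\mathrm{red}}$ differs from a power of $b$ by a nilpotent, which is again killed by a further $p$-power. So we may assume $A$ and $B$ are reduced, $A\subseteq B$, $B$ finite over $A$, and $X=\Spec(B)\to Y=\Spec(A)$ a bijection on $\k$-points. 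Because $\k$ is algebraically closed and the rings are of finite type, a bijection on closed points together with finiteness forces $\Spec(B)\to\Spec(A)$ to be a bijection on all points (finiteness makes it closed, and the fibers are finite sets of closed points), so it is a finite bijective morphism; in particular $A$ and $B$ have the same minimal primes, the same total ring of fractions up to purely inseparable stuff, and $B$ is contained in the normalization of $A$ in $\Frac(B)$.

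Next I would localize at a minimal prime to pass to the field case: for each minimal prime $\q$ of $B$, lying over the minimal prime $\p=\q\cap A$ of $A$, the induced extension of residue fields $\Frac(A/\p)\hookrightarrow \Frac(B/\q)$ is a finite field extension which, because the map is bijective on points (hence radicial/universally injective on the generic points — distinct points cannot map to the same point), must be \emph{purely inseparable}. Over a field of characteristic $p$ that means: for every $\bar b$ in $\Frac(B/\q)$ there is $m$ with $\bar b^{p^m}\in\Frac(A/\p)$. Doing this over all the finitely many minimal primes simultaneously, and using that $B$ is reduced so that $B\hookrightarrow\prod_\q B/\q\hookrightarrow\prod_\q\Frac(B/\q)$, we find a single $m$ so that $b^{p^m}$ maps into $\prod_\p\Frac(A/\p)$, i.e. $b^{p^m}$ lies in the total ring of fractions of $A$ and is integral over $A$ (being in $B$). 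So $b^{p^m}\in \overline{A}$, the integral closure of $A$ in its total fraction ring.

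The last step is to absorb the normalization: I claim that after increasing $m$ we can push $b^{p^m}$ back into $A$ itself. Here is where I expect the main obstacle, and the cleanest route is the classical fact that for a reduced finite-type $\k$-algebra $A$ with $\k$ perfect, the normalization $\overline A$ is a \emph{finite} $A$-module; pick $A$-module generators $c_1,\dots,c_s$ of $\overline A$, and note each $c_j$ satisfies the same purely-inseparable phenomenon over $A$ \emph{at each minimal prime} — but more directly, by the Nullstellensatz the conductor ideal $\mathfrak c=\{a\in A: a\overline A\subseteq A\}$ is a nonzero ideal contained in no minimal prime, so $\overline A[\mathfrak c^{-1}]=A[\mathfrak c^{-1}]$, and $V(\mathfrak c)$ is a proper closed subset; on the complement $b^{p^m}$ already lies in $A$, and along $V(\mathfrak c)$ one runs a Noetherian induction on $\dim A$ (the restriction of everything to the closed subscheme $\Spec(A/\mathfrak c)\leftarrow\Spec(B/\mathfrak c B)$ satisfies the same hypotheses, with strictly smaller dimension, and a further $p$-power of $b$ lands in $A/\mathfrak{c}B\cap$ something — patched via the Mayer–Vietoris/conductor square $A = A[\mathfrak c^{-1}]\times_{\overline A/\mathfrak c[\mathfrak c^{-1}]}\overline A/\mathfrak c$ to glue a local solution over the open part with the inductive solution over the closed part into a global element of $A$). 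Taking the maximum of the finitely many exponents produced by the two pieces yields one $m$ with $b^{p^m}\in A$, completing the proof. The delicate point to get right in the full write-up will be the gluing: ensuring that the exponents chosen on the open stratum $D(\mathfrak c)$ and on the closed stratum $V(\mathfrak c)$ can be made compatible so that a single $p$-power of $b$ restricts correctly on both and hence lies in $A$ by the conductor-square (faithfully flat / sheaf) descent.
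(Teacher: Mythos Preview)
Your overall strategy---reduce to reduced rings, show the function-field extension is purely inseparable, then run a conductor/dimension induction---parallels the paper's. Steps 1 and 2 are fine. The gap is in step 3. Your conductor $\mathfrak c$ is that of $A\subset\overline A$, but the induction you run is on $A/\mathfrak c\to B/\mathfrak c B$, which yields only $b^{p^{m'}}\in A+\mathfrak c B$. Combined with $b^{p^M}\in\overline A$ this gives $b^{p^M}-a'\in\overline A\cap\mathfrak c B$ for some $a'\in A$; but $\mathfrak c$ is an ideal of $\overline A$, not of $B$, and there is no evident reason why $\overline A\cap\mathfrak c B\subset A$. Your displayed fibre product is not the Milnor conductor square (that square is $A=\overline A\times_{\overline A/\mathfrak c}A/\mathfrak c$), and even the correct square does not help: it asks for the image of $b^{p^M}$ in $\overline A/\mathfrak c$, whereas your induction controls it only modulo $\mathfrak c B$. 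The alternative---inducting on $A/\mathfrak c\to\overline A/\mathfrak c$---is blocked because $\Spec\overline A\to\Spec A$ need not be bijective on $\k$-points (a node already fails).

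The paper avoids this by a slightly different reduction. After passing to domains it replaces $B$ by $AB^{p^t}$ for $t$ large enough that $\Frac(A)=\Frac(AB^{p^t})$; the bijectivity hypothesis is preserved because $\Spec B\to\Spec(AB^{p^t})$ is a universal homeomorphism. Now the conductor of $A\subset B$ is an ideal \emph{of $B$}, so the induction on $A/\mathfrak c\to B/\mathfrak c$ gives $b^{p^{m'}}\in A+\mathfrak c=A$ directly, with no gluing. Your step 2 already proves every $b\in B$ has a $p$-power in $\Frac(A)$, which is exactly what is needed to make this replacement; so the fix to your argument is simply to pass from $B$ to $AB^{p^m}\subset\overline A$ \emph{before} invoking a conductor, and then use the conductor of $A$ in that subring rather than in $\overline A$.
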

\begin{proof}
The result follows easily from \cite[Lemma 13]{Mathieu compos 69}.
We shall argue instead by induction on the Krull dimension of $A$.
\par
Say $B$ as an $A$-module is generated by $d$ elements $b_1$, \dots, $b_d$.
Let $\p_1$, \dots $\p_s$ be the minimal prime ideals of $A$.
\par
Suppose we can show that for every $i$, $j$ we have $m_{i,j}$
so that $b_j^{p^{m_{i,j}}}\in A+\p_iB $.
Then for every $i$ we have $m_i$ so that
$b^{p^{m_{i}}}\in A+\p_iB $ for every $b\in B$.
Then $b^{p^{m_{1}+\cdots m_{s}}}\in A+\p_1\cdots\p_s B$ for every $b\in B$.
As $\p_1\cdots\p_s$ is nilpotent, one finds $m$ with $b^{p^m}\in A$ for all 
$b\in B$. The upshot is that it suffices to
prove the sublemma for the inclusion $A/\p_i \subset B/\p_iB$.
[It is an inclusion because there is a prime ideal $\q_i$ in $B$ with $A\cap
\q_i=\p_i$.]
Therefore we further assume that $A$ is a domain.
\par
Let $\rad$ denote the nilradical of $B$.
If we can show that for all $b\in B$ there is $m$ with $b^{p^m}\in A+\rad$,
then clearly we can also find a $u$ with $b^{p^u}\in A$. So we may as well
replace $A\subset B$ with $A\subset B/\rad$ and
assume that $B$ is reduced. But then at least one component of
$\Spec(B)$ must map onto $\Spec(A)$, so bijectivity implies there is only
one component. In other words, $B$ is also a domain.
\par
Choose $t$ so that the field extension $\Frac(A)\subset \Frac(AB^{p^t})$ is
separable. (So it is the separable closure of $\Frac(A)$ in $\Frac(B)$.)
As $X\to \Spec(AB^{p^t})$ is also bijective, we have that 
$\Spec(AB^{p^t})\to\Spec(A)$ is bijective. It clearly suffices to prove
the proposition for $A\subset AB^{p^t}$. So we replace $B$ with $AB^{p^t}$ and
further assume that $\Frac(B)$ is separable over $\Frac(A)$.
\par
Now $X\to Y$ has a degree which is the degree of the
separable field extension. There is a dense subset $U$ of $Y$ so that
this degree is the number of elements
in the inverse image of a point of $U$. 
[Take a primitive element of the field extension, 
localize to make its minimum polynomial monic over $A$,
invert the discriminant.]
Thus the degree must be one because of bijectivity.
So we must now have that $\Frac(B)= \Frac(A)$.
\par
Let 
$\con=\{\;b\in B\mid bB \subset A\;\}$ be the conductor of $A \subset B$.
We know it is nonzero. 
If it is the unit ideal then we are done. Suppose it is not.
By induction applied to $A/\con\subset B/\con$ (we need 
the induction hypothesis for the original problem without any of the
intermediate simplifications)
we have that for each $b\in B$ there is
an $m$ so that $b^{p^m}\in A+\con=A$. 
\end{proof}
\subsection{}
This subsection prepares the ground for the proof of the theorems in Section \ref{Grosshans:section}.
We start with the ring of invariants $\k[G/U]$ of the action of $U$ by right translation on $\k[G]$.
\begin{Lem}\label{fg multicone:Lem}
The $\k$-algebra $\k[G/U]$ is finitely generated.
\end{Lem}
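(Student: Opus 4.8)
The plan is to reduce to the case $\k = \Z$ by base change, and then to exploit the decomposition of $\k[G/U]$ into costandard modules. Over $\Z$, the ring $\Z[G_\Z/U_\Z] = \Z[G_\Z]^{U_\Z}$ carries a $G_\Z \times T_\Z$-action (the $G$-action from left translation, the $T$-action induced from right translation through $B/U \cong T$), and as a $T_\Z$-module it decomposes as a direct sum over dominant weights $\lambda$ of its weight spaces, which by definition are precisely the costandard modules: $\Z[G_\Z/U_\Z] = \bigoplus_{\lambda \text{ dominant}} \ind_{B_\Z}^{G_\Z}\lambda_\Z = \bigoplus_\lambda \nabla_\lambda$ (this is the classical Borel--Weil-type description; over $\Z$ each $\nabla_\lambda$ is $\Z$-free by \cite[II Ch. 8]{J}, and Proposition \ref{base change:Pro} then gives $\k[G/U] = \Z[G_\Z/U_\Z]\ot\k$ for arbitrary $\k$). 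So it suffices to prove finite generation over $\Z$; finite generation of $R\ot_\Z\k$ over $\k$ follows for free.

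Next I would use that the weight monoid is finitely generated: the dominant weights form a finitely generated submonoid of $X(T)$, generated by the fundamental weights $\varpi_1,\dots,\varpi_\ell$ (here $G$ is semisimple simply connected after the reduction of Section \ref{reduction}, or one simply works with a fixed set of monoid generators in general). The multiplication $\k[G/U] \times \k[G/U] \to \k[G/U]$ respects the grading by $X(T)$, and on graded pieces it is a $G$-map $\nabla_\lambda \ot \nabla_\mu \to \nabla_{\lambda+\mu}$; the key algebraic fact is that this map is \emph{surjective} (the "multicone" is generated in degrees given by the monoid generators). Granting this, choosing for each fundamental weight $\varpi_i$ a finite $\k$-generating set of the $\k$-module $\nabla_{\varpi_i}$ (finitely generated since $\nabla_{\varpi_i}$ is $\Z$-free of finite rank over $\Z$), these finitely many elements generate $\k[G/U]$ as a $\k$-algebra: any homogeneous element of weight $\lambda = \sum n_i \varpi_i$ is, by iterated surjectivity of the multiplication maps, a sum of products of the chosen generators.

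The main obstacle is establishing that the multiplication $\nabla_\lambda \ot \nabla_\mu \to \nabla_{\lambda+\mu}$ is surjective over $\Z$ (equivalently, that $\k[G/U]$ is generated in "fundamental" degrees). Over a field this is classical — it follows from the fact that $\nabla_\lambda$ has simple socle and the image of the multiplication map is a nonzero (hence, after checking the highest weight lands correctly, full) submodule, together with a dimension/character count; more robustly it follows from Kempf's vanishing and the normality of $G/U$ (the affine cone argument). Over $\Z$ one argues that the cokernel $C_{\lambda,\mu}$ of $\nabla_{\lambda_\Z} \ot_\Z \nabla_{\mu_\Z} \to \nabla_{(\lambda+\mu)_\Z}$ is a finitely generated abelian group which vanishes after $\ot\, k$ for every residue field $k$ (by the field case), together with $\ot\,\Q$ (characteristic zero case); since $\nabla_{(\lambda+\mu)_\Z}$ is $\Z$-free, $C_{\lambda,\mu}$ is a quotient of a free module that dies modulo every prime and rationally, hence is zero. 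Once that surjectivity is in hand, the finite generation argument above goes through verbatim, and I would present the whole thing: (1) reduce to $\Z$ via Proposition \ref{base change:Pro}; (2) identify $\k[G/U] = \bigoplus_\lambda \nabla_\lambda$ with its $X(T)$-grading and the multiplication maps $\nabla_\lambda \ot \nabla_\mu \to \nabla_{\lambda+\mu}$; (3) prove these are surjective, reducing to the field case by the universal-coefficient/freeness argument; (4) conclude by picking $\k$-module generators for the finitely many $\nabla_{\varpi_i}$.
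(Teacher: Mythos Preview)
Your proposal is correct and follows essentially the same route as the paper: decompose $\k[G/U]=\bigoplus_{\lambda\text{ dominant}}\nabla_\lambda$, use finite generation of the dominant weight monoid, and deduce finite generation from surjectivity of the multiplication maps $\nabla_\lambda\ot\nabla_\mu\to\nabla_{\lambda+\mu}$, which is reduced to the field case (the paper cites \cite[II, Proposition 14.20]{J}) via base change and freeness over $\Z$. The only cosmetic differences are that the paper argues surjectivity over arbitrary $\k$ directly rather than first reducing the whole statement to $\k=\Z$, and it handles the non--simply-connected case explicitly via Dickson's Lemma rather than invoking the reduction of Section~\ref{reduction}.
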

\begin{proof}
We have:
\[
k[G/U]=\bigoplus_{\lambda\in X(T)}k[G/U]_{-\lambda}=\bigoplus_{\lambda\in X(T)}(k[G]\ot{\lambda})^B=\bigoplus_{\lambda\in X(T)}\nabla_{\lambda}
.\]
By Proposition \ref{base change:Pro},
this equals the sum $\oplus_{\lambda}\nabla_\lambda$ over dominant weights $\lambda$ only. 
When $G$ is simply connected, every fundamental weight is a weight, and the monoid of dominant $\lambda$ is finitely generated. 
In general, some multiple of a fundamental weight is in $X(T)$ and there are only finitely many dominant weights modulo these multiples.
So the monoid is still finitely generated
by Dickson's Lemma \cite[Ch. 2 Thm. 7]{CLO}.
The maps $\nabla_\lambda\ot \nabla_\mu\to \nabla_{\lambda+\mu}$ are surjective for dominant $\lambda$, $\mu$,
because this is so over $\Z$, by base change and surjectivity for fields \cite[II, Proposition 14.20]{J}. This implies the result.
\end{proof}
In the same manner one shows:
\begin{Lem}\label{fg hull:Lem}
If the $\k$-algebra $A^U$ is finitely generated, so is $\hull_\nabla\gr A=\ind_B^GA^{U^+}$. 
\end{Lem}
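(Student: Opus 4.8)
The plan is to mimic the proof of Lemma \ref{fg multicone:Lem}, replacing the role of $\k[G]$ by $A$ and keeping track of the $B^+$-weight grading. Recall that $\hull_\nabla\gr A=\ind_B^G A^{U^+}$, and that, exactly as in the computation in Lemma \ref{fg multicone:Lem}, one has a $T$-weight decomposition
\[
\ind_B^G A^{U^+}=\bigoplus_{\lambda\in X(T)}\bigl(\ind_B^G A^{U^+}\bigr)_{-\lambda}
=\bigoplus_{\lambda}\hom_B(-\lambda,A^{U^+})
=\bigoplus_{\lambda}(A^{U^+})_\lambda\ot\nabla_\lambda,
\]
where by Proposition \ref{dominant hw:Pro} only dominant $\lambda$ contribute, and $(A^{U^+})_\lambda$ is the $\lambda$-weight space of $A^{U^+}$, which is a $\k$-submodule of $A^U{}$-finite type once we know $A^U$ (equivalently $A^{U^+}$, after applying the automorphism swapping $B$ and $B^+$) is finitely generated. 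So $\hull_\nabla\gr A$ is the multihomogeneous piece of a ring that looks like $A^{U^+}\ot\k[G/U]$ cut out by matching weights; concretely it is a subalgebra of $A^{U^+}\ot_\k\k[G/U]$.

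First I would make precise that $\hull_\nabla\gr A$ is generated as a $\k$-algebra by finitely many pieces. The monoid of dominant weights $\lambda$ for which $\nabla_\lambda\neq0$ is finitely generated (Dickson's Lemma, as in Lemma \ref{fg multicone:Lem}), say by $\lambda_1,\dots,\lambda_s$. For each generator $\lambda_j$, pick $\k$-algebra generators $f_{j,1},\dots,f_{j,k_j}$ of the finitely generated $\k$-algebra $A^{U^+}$ lying in weight spaces, and combine them with $\k$-bases of the finitely generated $\k$-modules $\nabla_{\lambda_j}$ (each $\nabla_{\lambda_j}=\ind_{B_\Z}^{G_\Z}(\lambda_j)_\Z\ot\k$ is finitely generated over $\k$ by Proposition \ref{base change:Pro}). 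The key multiplicativity input is that, for dominant $\lambda,\mu$, the product maps
\[
\nabla_\lambda\ot\nabla_\mu\to\nabla_{\lambda+\mu},\qquad
(A^{U^+})_\lambda\ot(A^{U^+})_\mu\to (A^{U^+})_{\lambda+\mu}
\]
are compatible with the algebra structure on $\hull_\nabla\gr A=\ind_B^G A^{U^+}$, and the first is surjective (over $\Z$ by \cite[II, Proposition 14.20]{J}, hence for all $\k$ by base change, exactly as quoted in Lemma \ref{fg multicone:Lem}). Therefore any homogeneous element in weight $\lambda=\sum n_j\lambda_j$ is a sum of products of the chosen generators, and $\hull_\nabla\gr A$ is generated by the finite set $\{f_{j,i}\}$ together with the finitely many basis elements of the $\nabla_{\lambda_j}$.

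The step I expect to need the most care is the identification of $(\ind_B^G A^{U^+})_{-\lambda}$ with $(A^{U^+})_\lambda\ot\nabla_\lambda$ as $\k$-modules and the compatibility of the multiplications; this is where flatness issues could in principle intervene, but they do not, because $A^{U^+}$ enters only through its weight spaces (on which $T$ acts, so they are $\k$-module direct summands) and the only induction used is $\ind_B^G$ of weight modules, which is controlled by Proposition \ref{base change:Pro} and the tensor identity Proposition \ref{gti:Pro} — precisely the ``in the same manner'' that the statement alludes to. Once the weight-space bookkeeping is set up, the argument is word-for-word that of Lemma \ref{fg multicone:Lem}, with ``$\oplus_\lambda\nabla_\lambda$'' replaced by ``$\oplus_\lambda (A^{U^+})_\lambda\ot\nabla_\lambda$'', and the hypothesis that $A^U$ (hence $A^{U^+}$) is a finitely generated $\k$-algebra supplies the extra generators $f_{j,i}$ needed beyond those coming from $\k[G/U]$.
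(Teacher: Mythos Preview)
Your approach is essentially the paper's own: the paper's entire proof is the one line ``Use that $A^{U^+}$ is isomorphic to $A^U$ as $\k$-algebra'' (via the action of a lift of $w_0$), the phrase ``in the same manner'' pointing back to Lemma~\ref{fg multicone:Lem} exactly as you unfold it. Two small glitches worth cleaning up: the middle terms of your first display do not carry over from Lemma~\ref{fg multicone:Lem}---there the subscript $-\lambda$ and the $(\k[G]\ot\lambda)^B$ came from the \emph{right} $T$-grading on $\k[G/U]$, which has no analogue here, so the honest route to $\bigoplus_\lambda (A^{U^+})_\lambda\ot\nabla_\lambda$ is simply the $T$-weight decomposition of $A^{U^+}$ followed by Proposition~\ref{gti:Pro}; and Dickson's lemma is superfluous, since weight-homogeneous $\k$-algebra generators $a_1,\dots,a_n$ of $A^{U^+}$ already supply the finitely many weights $\mu_i$ you need, after which $\{a_i\ot v\,:\,v\text{ runs through $\k$-generators of }\nabla_{\mu_i}\}$ generates by the surjectivity of $\nabla_\lambda\ot\nabla_\mu\to\nabla_{\lambda+\mu}$.
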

\begin{proof}
Use that $A^{U^+}$ is isomorphic to $A^U$ as $\k$-algebra. 
\end{proof} 
\begin{Lem}\label{U inv fg:Lem}
Suppose $\k$ is Noetherian. If the $\k$-algebra $A$ with $G$ action is finitely generated, then so is $A^U$. 
\end{Lem}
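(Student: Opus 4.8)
The plan is to realize $A^U$ as a ring of $G$-invariants of a finitely generated $\k$-algebra, and then invoke Theorem~\ref{fg:theorem} together with the power reductivity of Chevalley group schemes (Theorem~\ref{Mum:theorem}). The heart of the matter is Grosshans' transfer principle: for every commutative $\k$-algebra $B$ on which $G$ acts rationally through $\k$-algebra automorphisms, there is a natural isomorphism of $\k$-algebras
\[
B^U\ \cong\ \bigl(B\ot_\k\k[G/U]\bigr)^G,
\]
where $G$ acts diagonally, by the given action on $B$ and by left translation on $\k[G/U]=\k[G]^U$ (invariants for right translation by $U$).

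To prove this I would start from the tensor identity in the form $(B\ot_\k\k[G])^G\cong B$: here $\k[G]$ carries left translation, $G$ acts diagonally on $B\ot\k[G]$, and, under the isomorphism, the commuting right-translation action of $G$ on $\k[G]$ is carried onto the given $G$-action on $B$. This is proved exactly as Proposition~\ref{gti:Pro}, using the comultiplication twist $\psi(f\ot b)\colon x\mapsto f(x)\ot xb$, and it is valid over an arbitrary $\k$ because $\k[G]$ is $\k$-free. One then passes to invariants for right translation by $U$ on both sides: on the right this yields $B^U$, while on the left, the right $U$-action commutes with the diagonal left $G$-action, and $\k[G]^U=\k[G/U]$, so one obtains $(B\ot_\k\k[G/U])^G$; the relevant structural input is that $\k[G]$ and $\k[G/U]=\bigoplus_{\lambda}\nabla_\lambda$ are $\k$-free. (Alternatively one may simply cite \cite{G}.)

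Granting the transfer principle, the lemma follows quickly. By Lemma~\ref{fg multicone:Lem}, $\k[G/U]$ is a finitely generated $\k$-algebra; together with the hypothesis that $A$ is finitely generated, this makes $A\ot_\k\k[G/U]$ a finitely generated commutative $\k$-algebra carrying a rational $G$-action through algebra automorphisms. Since $\k$ is Noetherian and $G$, being a semisimple Chevalley group scheme, is power-reductive by Theorem~\ref{Mum:theorem}, Theorem~\ref{fg:theorem} gives that $(A\ot_\k\k[G/U])^G$ is a finitely generated $\k$-algebra. By the transfer isomorphism with $B=A$, so is $A^U$.

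I expect the only real obstacle to be establishing the transfer isomorphism cleanly over an arbitrary Noetherian base: one must check that the tensor-identity isomorphism $(A\ot\k[G])^G\cong A$ is equivariant for right translation and carries it onto the given action on $A$, and that forming $U$-invariants commutes here with forming $G$-invariants, i.e.\ that $(A\ot_\k\k[G])^U=A\ot_\k\k[G]^U$. This is the point where the possible non-$\k$-flatness of $A$ must be controlled, using the $\k$-freeness of $\k[G]$ and $\k[G/U]$ so that no $\tor$-terms intervene; once this bookkeeping is in place, everything else is immediate from the results already established.
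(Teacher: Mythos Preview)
Your proposal is correct and follows essentially the same route as the paper: the paper simply writes
\[
A^U=\hom_U(\k,A)=\hom_G(\k,\ind_U^G A)=(A\ot \k[G/U])^G,
\]
citing the transfer principle from \cite[Ch.~Two]{G}, and then invokes Lemma~\ref{fg multicone:Lem} and Theorem~\ref{FFT:theorem} (which is just Theorems~\ref{Mum:theorem} and~\ref{fg:theorem} combined). Your added care in justifying the transfer isomorphism over an arbitrary base---via the $\k$-freeness of $\k[G]$ and $\k[G/U]$---is appropriate given that Grosshans works over a field, and is in the spirit of the paper's own remarks surrounding Proposition~\ref{gti:Pro}.
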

\begin{proof}
By the transfer principle \cite[Ch. Two]{G}:
\[
A^U=\hom_U(k,A)=\hom_G(k,\ind_U^GA)=(A\ot k[G/U])^G
.\]
Now apply Lemma \ref{fg multicone:Lem} and Theorem \ref{FFT:theorem}.
\end{proof} 
\begin{Lem}\label{embed:Lem}
If $M$ is a $G$-module, there is a natural injective map
\[
\gr M\hookrightarrow\hull_\nabla(\gr M)=\ind_{B}^GM^{U^+}
.\]
\end{Lem}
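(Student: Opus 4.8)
The plan is to exhibit the map filtration-degree by filtration-degree and then assemble. Fix a $G$-module $M$. For each $i$, the $i$-th Grosshans graded piece is $\gr_i M = M_{\leq i}/M_{<i}$, a $G$-module all of whose weights $\lambda$ satisfy $\hgt \lambda = i$ (the weights of height $<i$ have already been killed, and those of height $>i$ never entered $M_{\leq i}$). The target $\hull_\nabla(\gr M) = \ind_B^G M^{U^+}$ carries a grading as well: $M^{U^+} = \bigoplus_\gamma (M^{U^+})_\gamma$ with $\gamma$ ranging over dominant weights (by Proposition \ref{dominant hw:Pro}), and by Proposition \ref{gti:Pro} together with Proposition \ref{base change:Pro}, $\ind_B^G M^{U^+} = \bigoplus_\gamma \ind_B^G\bigl((M^{U^+})_\gamma\bigr) = \bigoplus_\gamma \nabla_\gamma \otimes (M^{U^+})_\gamma$, the sum over dominant $\gamma$. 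Grouping the summands with $\hgt\gamma = i$ gives the $i$-th graded piece of the hull.

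First I would construct, for each $i$, an injection $\gr_i M \hookrightarrow \bigoplus_{\hgt\gamma=i}\nabla_\gamma\otimes(M^{U^+})_\gamma$. The natural candidate arises from the unit of the adjunction $(\ind_B^G, \res^G_B)$: the $B$-module map $\gr_i M \to \gr_i M$ is the identity, and it factors through $\res^G_B\ind_B^G$, giving a $G$-module map $\gr_i M \to \ind_B^G(\res^G_B \gr_i M)$. What I actually want, however, is a map into $\ind_B^G\bigl((\gr_i M)^{U^+}\bigr)$. For this, note that the evaluation map $\ind_B^G \lambda \to \lambda$ on the $\lambda$-weight space is a $T$-isomorphism for dominant $\lambda$ (Proposition \ref{evaluation:Pro}); dually, a highest-weight vector generates, and one checks that a nonzero $G$-submodule of a $\nabla$-type module meets the highest weight space. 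Concretely, $\gr_i M$ has all weights of height exactly $i$, so its $U^+$-invariants $(\gr_i M)^{U^+}$ are supported on the highest weights, and the composite $\gr_i M \to \ind_B^G(\res^G_B\gr_i M) \to \ind_B^G\bigl((\gr_i M)^{U^+}\bigr)$ (the second arrow induced by the projection picking out $U^+$-invariants, using that $\ind_B^G$ of a module equals $\ind_B^G$ of its socle-type part here, or more simply by the tensor-identity decomposition above) is the desired map. Its injectivity follows because any $G$-submodule of $\gr_i M$ contains a highest weight vector, hence meets $(\gr_i M)^{U^+}$ nontrivially, so a nonzero kernel would be impossible.

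The one technical point I would need to nail down is the identification $(\gr_i M)^{U^+} = (M_{\leq i})^{U^+}/(M_{<i})^{U^+}$, i.e.\ that taking $U^+$-invariants is exact on the relevant short exact sequence $0\to M_{<i}\to M_{\leq i}\to \gr_i M\to 0$ — equivalently that $\ind$ from $B$ of this sequence stays exact in the degrees of interest. This is where I expect the real work: in general $R^1\ind_{B}^G$ need not vanish, but the weights of $\gr_i M$ all have height $i$ while those of $M_{<i}$ have strictly smaller height, and within a fixed height the dominant weights relevant to $M^{U^+}$ behave well — one uses Proposition \ref{base change:Pro} (Kempf vanishing over $\Z$ plus universal coefficients) to see $\ind_B^G$ is exact on weight spaces of dominant weights here. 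Once the graded injections are in hand, summing over $i$ and matching with the graded decomposition of $\ind_B^G M^{U^+}$ displayed above yields the asserted natural injection $\gr M \hookrightarrow \hull_\nabla(\gr M)$, compatibly with the $G$-action and the gradings. Naturality in $M$ is automatic since every arrow used (unit of adjunction, projection onto $U^+$-invariants, evaluation) is natural.
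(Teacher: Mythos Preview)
Your central claim---that every weight of $\gr_i M=M_{\leq i}/M_{<i}$ has Grosshans height exactly $i$---is false, and much of your outline leans on it. Take $M=\Delta_\lambda$ with $\hgt\lambda=i$: since any nonzero $G$-submodule of $\Delta_\lambda$ contains the highest weight space, $(\Delta_\lambda)_{<i}=0$ and $\gr_i\Delta_\lambda=\Delta_\lambda$, which has plenty of weights of height $<i$. What \emph{is} true, and what the paper uses, is the weaker fact $(\gr_i M)_{<i}=0$ (any $G$-submodule of $\gr_i M$ with only lower-height weights pulls back to a submodule of $M_{\leq i}$ with the same property, hence lies in $M_{<i}$). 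From this one sees that the height-$i$ weight spaces of $\gr_i M$ coincide with $(\gr_i M)^{U^+}$ (since $U^+$ strictly raises height and $\gr_i M$ has no weights of height $>i$), and that the $T$-projection of $\gr_i M$ onto these weight spaces is a $B$-module map (since $U$ strictly lowers height). Inducing this $B$-map gives the desired $\gr_i M\to\ind_B^G((\gr_i M)^{U^+})$; by Proposition~\ref{evaluation:Pro} it is an isomorphism on height-$i$ weights, so its kernel has only lower-height weights and is therefore contained in $(\gr_i M)_{<i}=0$. Your detour through the unit $\gr_i M\to\ind_B^G(\res^G_B\gr_i M)$ is harmless but unnecessary, and your injectivity argument (``any nonzero $G$-submodule meets $(\gr_i M)^{U^+}$'') would require, over a general base $\k$, a separate justification you do not give.

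Your last paragraph locates ``the real work'' in the wrong place. The identification $\bigoplus_i(\gr_i M)^{U^+}\cong M^{U^+}$ needs no Kempf vanishing or exactness of $\ind_B^G$. It is elementary once the false weight claim is replaced by the correct one: if $v\in M^{U^+}$ has dominant weight $\lambda$ of height $i$, then by Proposition~\ref{universal Weyl:Pro} the $G$-submodule it generates is a quotient of $\Delta_\lambda$, hence has all weights of height $\leq i$, so $v\in M_{\leq i}\setminus M_{<i}$; conversely any height-$i$ weight vector in $M_{\leq i}$ is automatically $U^+$-fixed. That is the paper's argument.
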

\begin{proof}
By Lemma \ref{dominant hw:Pro}, the weights of $M^{U^+}$ are dominant. 
If one of them, say $\lambda$, has Grosshans height $i$,
the universal property of Weyl modules (Proposition \ref{universal Weyl:Pro}) shows that $(M^{U^+})_\lambda$ is contained in a $G$-submodule with weights that do not have a larger Grosshans height. 
So the weight space $(M^{U^+})_\lambda$ is contained in $M_{\leq i}$, but not $M_{<i}$. 
We conclude that the $T$-module $\oplus_i(\gr_iM)^{U^+}$
may be identified with the $T$-module $M^{U^+}$.
It remains to embed $\gr_iM$ into $\ind_B^G(\gr_iM)^{U^+}$.
The $T$-module projection of $\gr_iM$ onto $(\gr_iM)^{U^+}$ may be viewed as a $B$-module map, 
and then, it induces a $G$-module map $\gr_iM\to \ind_B^G((\gr_iM)^{U^+})$, 
which restricts to an isomorphism on $(\gr_iM)^{U^+}$ by Proposition \ref{evaluation:Pro}. 
So its kernel has weights with lower Grosshans height, and must therefore be zero. 
\end{proof}
In the light of Lemma \ref{embed:Lem}, one may write:
\begin{Def}
A $G$-module $M$ has \emph{good Grosshans filtration} if
the embedding of $ \gr M$ into $\hull_\nabla(\gr M)$ is an isomorphism.
\end{Def}
It is worth recording the following characterization, just like in the case where $\k$ is a field.
\begin{Pro}[Cohomological criterion]\label{coh crit:Pro}
For a $G$-module $M$, the following are equivalent.
\begin{enumerate}
\item $M$ has good Grosshans filtration.
\item $\h^1(G,M\ot \k[G/U])$ vanishes.
\item $\h^n(G,M\ot \k[G/U])$ vanishes for all positive $n$.
\end{enumerate}
\end{Pro}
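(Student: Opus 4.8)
The plan is to prove the equivalence by combining the $\k[G/U]$ tensor identity with the universal coefficient theorem, so that the whole statement is reduced to the case where $\k$ is a field, where it is classical. The implications $(3)\Rightarrow(2)$ being trivial, and $(1)\Leftrightarrow(2)$ essentially the definition of good Grosshans filtration, the substance is packaging the cohomology of $M\ot\k[G/U]$ correctly. First I would record the key reinterpretation: by the decomposition $\k[G/U]=\bigoplus_{\lambda\text{ dominant}}\nabla_\lambda$ from Lemma \ref{fg multicone:Lem}, together with the tensor identity (Proposition \ref{gti:Pro}) and Frobenius reciprocity, one has
\[
\h^n(G,M\ot\k[G/U])=\bigoplus_{\lambda\text{ dominant}}\h^n(G,M\ot\nabla_\lambda)=\bigoplus_{\lambda\text{ dominant}}\h^n(B,M\ot\lambda),
\]
so the vanishing statements become statements about $\h^n(B,M\ot\lambda)=\h^n(B,M_{-\lambda})$-type cohomology; in particular $\h^0(G,M\ot\k[G/U])=M^{U^+}$ recovers $\hull_\nabla(\gr M)$ in degree matching the Grosshans grading.

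Next I would prove $(1)\Leftrightarrow(2)$. The embedding $\gr M\hookrightarrow\hull_\nabla(\gr M)$ of Lemma \ref{embed:Lem} is an isomorphism precisely when, filtration degree by filtration degree, $\ind_B^G(\gr_i M)^{U^+}$ has no weights of Grosshans height exceeding $i$ contributing beyond $\gr_i M$; the obstruction to this is measured by $R^1\ind_B^G$ of the relevant $B$-submodules, equivalently by $\h^1(B,\,\cdot\,)$, equivalently — after reassembling over all dominant $\lambda$ and using the identity above — by $\h^1(G,M\ot\k[G/U])$. This is the same bookkeeping as over a field; I would either cite the field case and bootstrap, or give the short direct argument using the long exact sequence for $0\to M_{<i}\to M_{\leq i}\to\gr_i M\to 0$ and $0\to M_{\leq i}\to M\to M/M_{\leq i}\to 0$.

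The real work is $(2)\Rightarrow(3)$, i.e. upgrading $\h^1$-vanishing to $\h^n$-vanishing for all $n$. Over a field this is standard (e.g.\ via the "good filtration $=$ acyclic for $-\ot\nabla$" machinery), and I would first dispose of that case by citation. For general $\k$ I would run the same universal-coefficient argument used in the proof of Proposition \ref{ext cancel:Pro}: reduce to $\k$ local with residue field $k$; write everything over $\Z$ using that the $\nabla_\lambda$ and the Weyl modules are $\Z$-free; observe that the Hochschild complex $C(G_\Z,M_\Z\ot\k[G_\Z/U_\Z])$ is a complex of torsion-free abelian groups, so that $\h^n(G,M\ot\k[G/U])$ is computed from it by tensoring, and $\h^1=0$ over $k$ plus finite generation (\cite[B.6]{J}) plus Nakayama forces the integral $\h^n$ to vanish after localizing, whence vanishing over $\k$. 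The subtlety I expect to be the main obstacle is that $M$ need not be $\k$-free, so one cannot literally write $M=M_\Z\ot\k$; I would handle this exactly as in Proposition \ref{ext cancel:Pro}, by descending induction on the highest weight of $M$ using Proposition \ref{universal Weyl:Pro} to compare $M$ with $\Delta_{\mu_\Z}\ot_\Z M_\mu$ (whose cohomology with $\k[G/U]$ reduces to the $\Z$-free case), the kernel and cokernel having strictly lower highest weight. Care is needed that "highest weight of $M$" makes sense — since $M\ot\k[G/U]$ only sees finitely many $\nabla_\lambda$ at a time in each cohomological degree after one restricts to a finitely generated piece, one may and should reduce to $M$ finitely generated first, exactly as in Section \ref{Mumford:section}.
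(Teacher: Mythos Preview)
Your proposed route for $(2)\Rightarrow(3)$ has a genuine gap. You want to run the descending-induction-on-highest-weight argument from Proposition~\ref{ext cancel:Pro}, comparing $M$ with $\Delta_{\mu_\Z}\otimes_\Z M_\mu$ and invoking the ``$\Z$-free case'' for the latter. But $\Delta_{\mu}\otimes\k[G/U]=\bigoplus_\lambda\Delta_\mu\otimes\nabla_\lambda$ is \emph{not} acyclic in general: it is $\nabla_\mu\otimes\nabla_\lambda$, not $\Delta_\mu\otimes\nabla_\lambda$, that has vanishing higher cohomology. So your base case fails. Moreover, the induction in Proposition~\ref{ext cancel:Pro} works because the vanishing there is \emph{unconditional} on the module $N$; here the desired vanishing holds only for $M$ with good Grosshans filtration, so an induction that replaces $M$ by the kernel and cokernel of $\Delta_\mu\otimes M_\mu\to M$ cannot proceed, since nothing guarantees those pieces inherit hypothesis~(2).

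The paper instead proves $(1)\Rightarrow(3)$ directly, and this is where the definition of good Grosshans filtration actually does work. If $\gr M=\hull_\nabla(\gr M)=\ind_B^G M^{U^+}$, then each $\gr_iM$ is a direct sum of modules $\nabla_\lambda\otimes N$ with $G$ acting trivially on $N$ (namely $N=(M^{U^+})_\lambda$); hence $\gr_iM\otimes\k[G/U]$ is a direct sum of $\nabla_\lambda\otimes\nabla_\mu\otimes N$, which is acyclic by \cite[B.4]{J} over $\Z$ together with the universal coefficient theorem. Acyclicity of $M\otimes\k[G/U]$ then follows by passing up the Grosshans filtration. The implication $(2)\Rightarrow(1)$ is proved by contrapositive: pick the least $i$ with $M_{\leq i}$ not good, find $\lambda$ of Grosshans height $<i$ with $\hom(\Delta_\lambda,\hull(\gr_iM)/\gr_iM)\neq0$, and chase long exact sequences to produce a nonzero class in $\h^1(G,M\otimes\nabla_\lambda)$. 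Your sketch of $(1)\Leftrightarrow(2)$ gestures at this but does not supply the argument; the point is that one needs the already-established acyclicity of $M_{<i}\otimes\k[G/U]$ to push the obstruction from $\gr_iM$ up to $M$.
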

\begin{proof}
Let $M$ have good Grosshans filtration. We have to show that $M\ot \k[G/U]$ is acyclic.
First, for each integer $i$, $\gr_i M\ot \k[G/U]$ is a direct sum of modules of the form $\ind_B^G\lambda\ot\ind_B^G\mu\ot N$, where $G$ acts trivially on $N$.
Such modules are acyclic by \cite[B.4]{J} and the universal coefficient theorem.
As each $\gr_i M\ot \k[G/U]$ is acyclic, so is each $M_{\leq i}\ot \k[G/U]$, and thus $M\ot \k[G/U]$ is acyclic.
\par
Conversely, suppose that $M$ does not have good Grosshans filtration. Choose $i$ so that $M_{<i}$ has good Grosshans filtration, but $M_{\leq i}$ does not.
Choose $\lambda$ so that $\hom(\Delta_\lambda,\hull(\gr_i M)/\gr_i M)$ is nonzero. Note that $\lambda$ has Grosshans height below $i$. As $\hom(\Delta_\lambda,\hull(\gr_i M))$ vanishes, 
$\ext^1_{G}(\Delta_\lambda,\gr_i M)=\h^1(G,\gr_i M\ot\nabla_\lambda)$ does not.
Since $M_{<i}\ot \k[G/U]=\oplus_{\mu \mbox{ dominant}}M_{<i}\ot\nabla_\mu$ is acyclic, 
$\h^1(G,M_{\leq i}\ot\nabla_\lambda)$ is nonzero as well. 
Now use that $\hom(\Delta_\lambda,M/M_{\leq i})$ vanishes, and conclude that $\h^1(G,M\ot \k[G/U])$ does not vanish.
\end{proof}

\section{Grosshans graded, Grosshans hull and powers}\label{Grosshans:section}
\subsection{}
When $G$ is a semisimple group over a field $\k$,
Grosshans has introduced a filtration on $G$-modules.
As recalled in Section \ref{Grosshans notations:subsection}, it is the filtration associated to the function defined on $X(T)$ by:
$\hgt \gamma=\sum_{\alpha>0}\langle \gamma,\alpha^\vee\rangle$.
Grosshans has proved some interesting results about its associated graded,
when the $G$-module is a $\k$-algebra $A$ with rational $G$ action. 
We now show how these results generalize to an arbitrary Noetherian base $\k$,
and we draw some conclusions about $\h^*(G,A)$. 
All this suggests that the finite generation conjecture of \cite{vdK coh} (see also \cite{vdK reductive}) deserves to be investigated in the following generality. 
\begin{Pb}\label{vdk conj:Pb}
Let $\k$ be a Noetherian ring and let $G$ be a Chevalley group scheme over $\k$.
Let $A$ be a finitely generated commutative $\k$-algebra on which $G$ acts rationally through algebra automorphisms. 
Is the cohomology ring $\h^*(G,A)$ a finitely generated $\k$-algebra?
\end{Pb}
Let $\k$ be an arbitrary commutative ring.
\begin{theorem}[Grosshans hull and powers]\label{hull and powers:theorem}
The natural embedding of $\gr A$ in $\hull_\nabla(\gr A)$ is power-surjective.
\end{theorem}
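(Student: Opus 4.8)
The plan is to reduce the statement over an arbitrary commutative ring $\k$ to the case of an algebraically closed field of positive characteristic, where the result is Mathieu's theorem \cite[Key Lemma 3.4]{Mathieu G} (recalled in Section \ref{correcting the book}). The first step is a standard reduction to the Noetherian, indeed the finitely generated, case: an arbitrary $\k$-algebra $A$ with rational $G$-action is a filtered union of finitely generated $G$-stable subalgebras, and both $\gr$ and $\hull_\nabla(\gr(-))=\ind_B^G((-)^{U^+})$ commute with filtered colimits (the latter because $\ind_B^G$ is computed by the Hochschild complex, which does), so it suffices to treat finitely generated $A$ over a finitely generated $\Z$-algebra $\k$; in particular we may assume $\k$ Noetherian. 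By Lemma \ref{U inv fg:Lem}, $A^U\cong A^{U^+}$ is then finitely generated, and by Lemma \ref{fg hull:Lem}, $\hull_\nabla(\gr A)=\ind_B^G A^{U^+}$ is a finitely generated $\k$-algebra. So we must show: for a finitely generated $\k$-algebra $B:=\hull_\nabla(\gr A)$ containing the subalgebra $S:=\gr A$ (finitely generated by $(A^G)$-style arguments: it is a quotient of $\oplus_i A_{\leq i}$, and one checks finite generation using Theorem \ref{fg:theorem} applied to the Rees-type algebra), every element of $B$ has a power lying in $S$.

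The key geometric input is that the inclusion $S\hookrightarrow B$ is an \emph{integral} extension with the same reduced spectrum. Over a field this is exactly what Mathieu proved; the mechanism is that $B/S$ consists of highest-weight vectors whose weights already occur in $\gr A$, and power-reductivity (via Proposition \ref{power:Pro}, or rather its graded refinement) produces for each $b\in B$ a relation forcing a power of $b$ into $S$ up to an element annihilated by a unit-generating family of scalars — this is precisely the argument used in the proof that property (Int) implies power-reductivity in Section \ref{power Int:subsec}. Concretely, I would run that same cokernel-vanishing argument: for $b\in B$ homogeneous, power-reductivity of $G$ (Theorem \ref{Mum:theorem}) applied to a suitable $G$-submodule $M$ of $\gr A$ surjecting onto $\k\cdot b^N$ gives that $b^{Nd}\in S$ for some $d$; the inhomogeneous case follows by filtering by degree. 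The delicate point is that $\gr A$ need not be flat and $B$ need not be reduced, so one cannot simply quote a field statement pointwise.

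This is where the main obstacle lies: passing from a statement over each residue field to a statement over $\k$. The strategy I expect to use is the one already deployed in the proof of Proposition \ref{ext cancel:Pro}: work first over $\Z$ (or $\Z_{(p)}$) using that the relevant modules $\nabla_\lambda$, $\Delta_\lambda$ are $\Z$-free and that cohomology of $G_\Z$ is finitely generated \cite[B.6]{J}, apply the universal coefficient theorem and Nakayama to descend the field-level vanishing/surjectivity to $\Z_{(p)}$, then base change to $\k$. The exponent in "power-surjective" is not uniform across residue characteristics, but we only need \emph{some} power for \emph{each} element, and the finite generation of $B$ means it is enough to handle finitely many algebra generators; for each generator one picks up a finite bound by the Noetherian/localization argument of Section \ref{Mumford:section} (reduce to local $\k$, then the residue characteristic $p$ is fixed and Mathieu's bound $p^r$ applies after the $\Z_{(p)}$-descent). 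Assembling these local bounds over $\Spec\k$ — which is where one invokes that the ideals of "scalars admitting a power-lift" form a monotone family exhausting $\k$, exactly as in the reduction-to-local-rings step — completes the proof.
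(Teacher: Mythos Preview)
Your proposal has a genuine gap: the central mechanism you invoke, power reductivity of $G$, does not apply to the problem at hand. Power reductivity (Definition~\ref{power reductive:Def}) concerns equivariant surjections $M\to L$ onto a cyclic module with \emph{trivial} $G$-action, and produces $G$-\emph{invariants} in $S^dM$. But a typical element $b\in\hull_\nabla(\gr A)$ is not $G$-invariant, nor is any power of it; the cyclic object $\k\cdot b^N$ is not a $G$-module, and there is no $G$-equivariant surjection from a submodule of $\gr A$ onto it. So the sentence ``power-reductivity of $G$ applied to a suitable $G$-submodule $M$ of $\gr A$ surjecting onto $\k\cdot b^N$ gives that $b^{Nd}\in S$'' has no content. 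The cokernel-vanishing trick of Section~\ref{power Int:subsec} likewise lives entirely inside the world of invariants and does not help here. Your subsequent plan (Nakayama descent from residue fields as in Proposition~\ref{ext cancel:Pro}) is aimed at cohomological vanishing, not at producing explicit powers, and does not survive the collapse of the first step.

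The paper's proof proceeds by an entirely different route that you have not identified. It isolates a \emph{universal} instance of the inclusion in question: over $\Z$, for each dominant $\lambda$, one has the graded algebra $S'(\lambda)=\bigoplus_n\nabla_{n\lambda}$ and its subalgebra $S(\lambda)$ generated by the minimal admissible lattice $\Delta_\lambda\subset\nabla_\lambda$. A projective-geometry argument (comparing $\Proj S$ with $\Proj S'$ via projective cones and properness, Lemmas~\ref{everywhere:Lem}--\ref{integral:Lem}) shows $S'$ is integral over $S$, hence $tS'\subset S$ for some integer $t$; combining this with Mathieu's field result through Proposition~\ref{co t:Prop} yields that $S(\lambda)\hookrightarrow S'(\lambda)$ is \emph{universally} power-surjective (Lemma~\ref{universal:Lem}). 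The general case then follows because any weight vector $b\in A^{U^+}$ of weight $\lambda$ induces, via the universal property of Weyl modules, a map $\psi_b:S'(\lambda)\otimes\k\to\hull_\nabla(\gr A)$ whose restriction to $S(\lambda)\otimes\k$ factors through $\gr A$; tensor products of such maps cover all of $\hull_\nabla(\gr A)$. Note that no Noetherian hypothesis on $\k$ or finite generation of $A$ is needed---your reduction step is harmless but unnecessary.
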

This will then be used to prove:
\begin{theorem}[Grosshans hull and finite generation]\label{hull and finite generation:theorem}
If the ring $\k$ is Noetherian, then the following are equivalent.
\begin{enumerate}
\item The $\k$-algebra $A$ is finitely generated;
\item For every standard parabolic $P$, the $\k$-algebra of invariants $A^{R_u(P)}$ is finitely generated;
\item The $\k$-algebra $\gr A$ is finitely generated;
\item The $\k$-algebra $\hull_\nabla(\gr A)$ is finitely generated.
\end{enumerate}
\end{theorem}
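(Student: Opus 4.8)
The plan is to prove the four conditions equivalent by running the cycle (i)\,$\Rightarrow$\,(ii)\,$\Rightarrow$\,(iv)\,$\Rightarrow$\,(iii)\,$\Rightarrow$\,(i), combining the Grosshans-theoretic lemmas already in hand with two standard facts of commutative algebra: the Artin--Tate lemma and the lifting of algebra generators through a filtration. For (i)\,$\Rightarrow$\,(ii) I would rerun the transfer argument in the proof of Lemma \ref{U inv fg:Lem} with an arbitrary standard parabolic $P$ in place of $B$: the transfer principle and the tensor identity give $A^{R_u(P)}=\hom_G(\k,\ind_{R_u(P)}^G A)=(A\ot\k[G/R_u(P)])^G$, and $\k[G/R_u(P)]$ is a finitely generated $\k$-algebra by the same reasoning as in Lemma \ref{fg multicone:Lem} (decompose it over $\Z$ into costandard-type pieces, apply Dickson's lemma to the relevant monoid of weights, then base change); hence $A\ot\k[G/R_u(P)]$ is a finitely generated $\k$-algebra carrying a rational $G$-action, and Theorem \ref{FFT:theorem} yields (ii). Specializing to $P=B$, so that $R_u(P)=U$, and feeding the finite generation of $A^U$ into Lemma \ref{fg hull:Lem} gives finite generation of $\hull_\nabla(\gr A)=\ind_B^G A^{U^+}$; this is (ii)\,$\Rightarrow$\,(iv).

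The heart of the matter is (iv)\,$\Rightarrow$\,(iii). By Theorem \ref{hull and powers:theorem} the natural embedding $\gr A\hookrightarrow\hull_\nabla(\gr A)$ is power-surjective, so every $z\in\hull_\nabla(\gr A)$ has some power $z^{n}\in\gr A$ and is therefore a root of the monic polynomial $X^{n}-z^{n}$ over $\gr A$, i.e.\ is integral over $\gr A$. Assuming (iv), choose finitely many $\k$-algebra generators of $\hull_\nabla(\gr A)$; since $\k$ maps into $\gr A$ these already generate $\hull_\nabla(\gr A)$ as a $\gr A$-algebra, and being integral they exhibit $\hull_\nabla(\gr A)$ as a finite $\gr A$-module. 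The Artin--Tate lemma --- available precisely because $\k$ is Noetherian --- then shows the intermediate algebra $\gr A$ is finitely generated over $\k$. For the last step (iii)\,$\Rightarrow$\,(i), observe that the Grosshans filtration $0\subseteq A_{\leq 0}\subseteq A_{\leq 1}\subseteq\cdots$ is exhaustive and multiplicative; lifting homogeneous $\k$-algebra generators $\bar g_j\in\gr_{d_j}A$ of $\gr A$ to elements $g_j\in A_{\leq d_j}$ and inducting on filtration degree shows the $g_j$ generate $A$ as a $\k$-algebra.

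I expect (iv)\,$\Rightarrow$\,(iii) to be the genuine obstacle: a subalgebra of a finitely generated algebra need not be finitely generated --- this is exactly the Hilbert-fourteenth phenomenon --- so the implication cannot be purely formal, and what carries it through is precisely the power-surjectivity of Theorem \ref{hull and powers:theorem} (which supplies integrality) together with the Noetherian hypothesis (needed for Artin--Tate); this is also the point at which the Noetherian assumption is used essentially. A lesser nuisance, by comparison, is verifying that $\k[G/R_u(P)]$ is finitely generated and well-behaved under base change from $\Z$ for an arbitrary Noetherian $\k$, but this is handled exactly as for $\k[G/U]$ in Lemma \ref{fg multicone:Lem}, the only new ingredient being finite generation of the monoid of $P$-dominant weights.
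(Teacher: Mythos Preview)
Your cycle and your arguments for (ii)\,$\Rightarrow$\,(iv), (iv)\,$\Rightarrow$\,(iii), and (iii)\,$\Rightarrow$\,(i) match the paper's (Lemmas~\ref{fg hull:Lem}, \ref{hull fg gr fg:Lem}, \ref{gr fg fg:Lem}) exactly: power-surjectivity supplies integrality, and the Artin--Tate manoeuvre you describe for (iv)\,$\Rightarrow$\,(iii) is precisely the content of Lemma~\ref{hull fg gr fg:Lem}.

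The one divergence is (i)\,$\Rightarrow$\,(ii) for a general standard parabolic $P$. Your transfer argument is valid in outline but rests on finite generation of $\k[G/R_u(P)]$, and the claim that this follows ``by the same reasoning as in Lemma~\ref{fg multicone:Lem}'' is too optimistic. Unlike $\k[G/U]=\bigoplus_{\lambda}\nabla_\lambda$, the algebra $\k[G/R_u(P)]$ carries a right action of the full Levi $L$, not just of a torus; its right $T$-weight spaces are induced from the subgroup $T\,R_u(P)\subsetneq B$ rather than from $B$, so they are not single costandard modules, and there is no clean direct-sum decomposition indexed by a monoid of weights to which Dickson's lemma applies. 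The paper's route (Lemma~\ref{Ru inv fg fg:Lem}) sidesteps this entirely: writing $U=V\,R_u(P)$ with $V=U\cap L$ the unipotent radical of a Borel in the Levi, one has $A^U=(A^{R_u(P)})^V$; then Lemma~\ref{U inv fg fg:Lem}, applied with the Levi group acting on $A^{R_u(P)}$, converts finite generation of the $V$-invariants $A^U$ into finite generation of $A^{R_u(P)}$. Since your cycle only uses (ii) at $P=B$ anyway, the cleanest repair is to close the loop via Lemma~\ref{U inv fg:Lem} alone and then derive the general (ii) afterward from the established equivalences by this factorization trick; the direct attack on $\k[G/R_u(P)]$ is unnecessary.
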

\begin{Rem}
Consider a reductive Chevalley group scheme $G$.
As the Grosshans height is formulated with the help of coroots $\alpha^\vee$, 
only the semisimple part of $G$ is relevant for it. 
But of course everything is compatible with the action of the center of $G$ also. 
We leave it to the reader to reformulate our results for reductive $G$. 
We return to the assumption that $G$ is semisimple.
\end{Rem}
\begin{theorem}\label{torsion:theorem}
Let $A$ be a finitely generated commutative $\k$-algebra. If $\k$ is Noetherian,
there is a positive integer $n$ so that:
\[
n\hull_\nabla(\gr A)\subseteq \gr A
.\]
In particular $\h^i(G,\gr A)$ is annihilated by $n$ for positive $i$.
\end{theorem}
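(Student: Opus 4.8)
The plan is to combine Theorems \ref{hull and powers:theorem} and \ref{hull and finite generation:theorem} with linear reductivity in characteristic zero. Write $S=\gr A$ and $H=\hull_\nabla(\gr A)$. Since $\k$ is Noetherian and $A$ is finitely generated, Theorem \ref{hull and finite generation:theorem} gives that both $S$ and $H$ are finitely generated $\k$-algebras, hence Noetherian. By Theorem \ref{hull and powers:theorem} the inclusion $S\hookrightarrow H$ is power-surjective, so each member of a finite set of $\k$-algebra generators $b_1,\dots,b_s$ of $H$ satisfies a monic equation $X^{e_j}=b_j^{e_j}$ with $b_j^{e_j}\in S$. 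Thus $H=S[b_1,\dots,b_s]$ is integral over $S$, and being a finitely generated $S$-algebra with integral generators it is a finite $S$-module; in particular $H/S$ is a finitely generated $S$-module.

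Next I would show that $H/S$ is annihilated by a positive integer. The key input is that over a $\Q$-algebra the group $G$ is linearly reductive: for a $(\k\ot_\Z\Q)$-module $M$ the Hochschild complex computing $\h^\bullet(G_{\k\ot\Q},M)$ coincides with the one over $G_\Q$, and $G_\Q$ is reductive over a field of characteristic zero, so $\h^{>0}(G,M)=0$. By the cohomological criterion Proposition \ref{coh crit:Pro}, every $(\k\ot\Q)$-module therefore has good Grosshans filtration. Applying this to $A\ot_\k(\k\ot\Q)$, and using that both $\gr$ and $\hull_\nabla(\gr{-})=\ind_B^G({-})^{U^+}$ commute with the flat base change $\k\to\k\ot\Q$ (flatness for $\ind_B^G$ and for $U^+$-invariants, and compatibility of the height filtration with localization, as already exploited in Section \ref{Mumford:section}), we get $S\ot_\Z\Q=H\ot_\Z\Q$, i.e. $(H/S)\ot_\Z\Q=0$. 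Hence $H/S$ is a torsion abelian group; as it is generated over $S$ by finitely many elements $x_1,\dots,x_N$, each killed by some $n_k\in\N$, the integer $n=\prod_k n_k$ annihilates all of $H/S$ (since $S$ is a $\Z$-algebra, $n_k x_k=0$ forces $n_k(ax_k)=0$ for $a\in S$). This is exactly $nH\subseteq S=\gr A$.

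For the cohomological assertion I would run the long exact sequence attached to $0\to\gr A\to H\to H/S\to 0$. Here $H=\ind_B^G A^{U^+}$ is acyclic: by Proposition \ref{dominant hw:Pro} the weights of $A^{U^+}$ are dominant, so by the weight-tensor identity Proposition \ref{gti:Pro} and the fact that $\ind_B^G$ commutes with direct sums, $H\cong\bigoplus_{\lambda\text{ dominant}}\nabla_\lambda\ot V_\lambda$ with each $V_\lambda=(A^{U^+})_\lambda$ a trivial $G$-module, and each summand $\nabla_\lambda\ot V_\lambda$ is acyclic by \cite[B.4]{J} and the universal coefficient theorem (the $\mu=0$ case of the computation in the proof of Proposition \ref{coh crit:Pro}). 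Moreover $\gr A$ and $H$ have the same degree-zero part $A^G$: for $H$ one has $H_0=\ind_B^G((A^{U^+})_0)=\ind_B^G(A^{B^+})=\ind_B^G(A^G)=A^G$ by the tensor identity and Proposition \ref{base change:Pro}, while $(\gr A)_0=A^G$ by definition of the Grosshans filtration; so $\h^0(G,\gr A)\to\h^0(G,H)$ is an isomorphism. The long exact sequence therefore reduces to isomorphisms $\h^i(G,H/S)\cong\h^{i+1}(G,\gr A)$ for every $i\geq0$. Since $n$ annihilates $H/S$ it annihilates $\h^i(G,H/S)$, hence $\h^{i+1}(G,\gr A)$, for all $i\geq0$.

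The step I expect to require the most care is the base-change bookkeeping in the second paragraph: one must check cleanly that forming $\gr$ and $\hull_\nabla(\gr{-})$ commutes with $\k\to\k\ot\Q$ (flatness settles $\ind_B^G$ and $U^+$-invariants at once, but the ``largest submodule with bounded weights'' description of the height filtration deserves a sentence), and that the reduction ``$\h^{>0}(G,-)=0$ over a $\Q$-algebra'' from the known field case is legitimate. The remaining ingredients — integrality forcing module-finiteness, a finitely generated torsion $S$-module being annihilated by an integer, acyclicity of $H$, and the degree-zero matching — are routine given the results already established.
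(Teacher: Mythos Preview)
Your argument is correct. The route to $n\,\hull_\nabla(\gr A)\subseteq\gr A$ differs from the paper's: there one chooses weight vectors $b_1,\dots,b_s\in A^{U^+}$ so that $\psi_{b_1}\otimes\cdots\otimes\psi_{b_s}$ surjects onto $H=\hull_\nabla(\gr A)$, and then transports Lemma~\ref{torsion:Lem} (that $tS'(\lambda)\subseteq S(\lambda)$ over~$\Z$, itself proved via $S'\otimes\Q=S\otimes\Q$) through this tensor product, exactly parallel to the proof of Theorem~\ref{hull and powers:theorem}. You instead deduce module-finiteness of $H$ over $S=\gr A$ from power-surjectivity and then kill $H/S$ rationally by invoking linear reductivity of $G$ over $\Q$-algebras together with flat base change of $\gr$ and $\hull_\nabla$ along $\k\to\k\otimes\Q$. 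Both rest on the same underlying fact that Weyl and Schur modules coincide over~$\Q$; the paper's route is more direct and avoids the base-change verification for the Grosshans filtration that you rightly flag, while yours is more conceptual and does not revisit the $\psi_b$ machinery. (Your pointer to Section~\ref{Mumford:section} is slightly off---what is used there is commutation of \emph{invariants} with localization---but the claim itself follows at once from describing $M_{\leq i}$ as the preimage, under the comodule map, of $(\bigoplus_{\hgt\lambda\leq i}M_\lambda)\otimes\k[G]$, a kernel that commutes with flat base change.) For the cohomological assertion the two proofs agree; the paper simply notes that $\h^i(G,\gr A)$ is a quotient of $\h^{i-1}(G,H/S)$, and your extra $\h^0$-isomorphism is harmless but unnecessary.
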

This is stronger than the next result.
\begin{theorem}[generic good Grosshans filtration]\label{generic g G:theorem}
Let $A$ be a finitely generated commutative $\k$-algebra. If $\k$ is Noetherian,
there is a positive integer $n$ so that $A[1/n]$ has good Grosshans filtration.
In particular $\h^i(G,A)\ot\Z[1/n]= \h^i(G,A[1/n])$ vanishes for positive $i$.
\end{theorem}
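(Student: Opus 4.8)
The plan is to deduce Theorem~\ref{generic g G:theorem} from Theorem~\ref{torsion:theorem}, which gives a positive integer $n$ with $n\,\hull_\nabla(\gr A)\subseteq\gr A$, together with the cohomological criterion (Proposition~\ref{coh crit:Pro}) and the localization-commutes-with-cohomology fact already used repeatedly in the excerpt. First I would invert $n$: set $A'=A[1/n]$, which is again a finitely generated commutative $\k[1/n]$-algebra with rational $G$-action, and observe that forming $A^{U^+}$, the Grosshans filtration, and $\gr$ all commute with the flat base change $\k\to\k[1/n]$ (the Hochschild complex and the weight-space decomposition localize, and $\ind_B^G$ commutes with flat base change by Proposition~\ref{base change:Pro} applied weight by weight). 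Hence $\gr(A') = (\gr A)[1/n]$ and $\hull_\nabla(\gr A') = \hull_\nabla(\gr A)[1/n]$, and the inclusion $n\,\hull_\nabla(\gr A)\subseteq\gr A$ becomes an \emph{equality} $\hull_\nabla(\gr A')=\gr(A')$ after inverting $n$, since $n$ is now a unit. By Lemma~\ref{embed:Lem} the embedding $\gr(A')\hookrightarrow\hull_\nabla(\gr A')$ is therefore an isomorphism, i.e. $A'=A[1/n]$ has good Grosshans filtration by definition.

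Next I would extract the cohomological consequence. By the cohomological criterion (Proposition~\ref{coh crit:Pro}, $(\mathrm{i})\Rightarrow(\mathrm{iii})$), good Grosshans filtration of $A[1/n]$ forces $\h^n(G_{\k[1/n]}, A[1/n]\ot\k[G/U])=0$ for all positive $n$. To get the cleaner statement $\h^i(G,A)\ot\Z[1/n]=0$ I would instead argue directly: $n\,\hull_\nabla(\gr A)\subseteq\gr A\subseteq\hull_\nabla(\gr A)$ shows the cokernel of $\gr A\hookrightarrow\hull_\nabla(\gr A)$ is annihilated by $n$, so the long exact cohomology sequence and the acyclicity of $\hull_\nabla(\gr A)=\ind_B^G A^{U^+}$ (a sum of modules of the form $\ind_B^G\lambda\ot N$ with trivial $N$, acyclic by \cite[B.4]{J} and the universal coefficient theorem, exactly as in the proof of Proposition~\ref{coh crit:Pro}) already give that $\h^i(G,\gr A)$ is $n$-torsion for $i>0$ --- which is Theorem~\ref{torsion:theorem}'s second assertion. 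Filtering $A$ by the $A_{\leq j}$ and passing up the filtration degree by degree, a d\'evissage then yields that $\h^i(G,A)$ is $n$-torsion for $i>0$, hence $\h^i(G,A)\ot\Z[1/n]=0$; and since localization is exact and commutes with taking the homology of the Hochschild complex, $\h^i(G,A)\ot\Z[1/n]=\h^i(G_{\k[1/n]},A[1/n])$, giving the displayed vanishing.

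The only genuinely substantive input is Theorem~\ref{torsion:theorem}, which is invoked as a black box; granting it, the present theorem is a short formal consequence, and indeed the excerpt already flags that Theorem~\ref{torsion:theorem} ``is stronger than the next result.'' The main point requiring care is the d\'evissage along the Grosshans filtration: one must check that each graded piece $\gr_j A$ is finitely generated as a module (so the filtration is exhausted in the relevant cohomological range by finitely many steps, or at least that cohomology commutes with the colimit), which follows from Noetherianity of $\k$ and finite generation of $A$ together with finite generation of $\hull_\nabla(\gr A)$ from Theorem~\ref{hull and finite generation:theorem}; and that the connecting maps in the d\'evissage indeed only introduce $n$-torsion, which is immediate since each $\gr_j A$ has $n$-torsion higher cohomology. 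I expect no real obstacle here beyond bookkeeping; the substance is entirely in Theorem~\ref{torsion:theorem}.
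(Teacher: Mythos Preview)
Your proposal is correct and follows exactly the paper's approach: the paper's entire proof reads ``Take $n$ as in Theorem~\ref{torsion:theorem}, and use that localization is exact,'' and you have simply unpacked this one line. One small imprecision: your d\'evissage claim that $\h^i(G,A)$ is literally $n$-torsion does not follow (extensions of $n$-torsion groups need only be $n^2$-torsion, and the filtration is infinite), but this is harmless since you already have the cleaner route---localize first, observe that $A[1/n]$ has good Grosshans filtration, note that $A[1/n]$ is a direct summand of $A[1/n]\ot\k[G/U]$ via $\nabla_0=\k$, and conclude $\h^i(G,A[1/n])=0$ directly from Proposition~\ref{coh crit:Pro}.
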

\begin{Rem}
Of course $A[1/n]$ may vanish altogether, as we are allowed to take the characteristic for $n$,
when that is
positive.
\end{Rem}
\begin{theorem}\label{gr power:theorem}
Let $A$ be a finitely generated commutative $\k$-algebra. If $\k$ is Noetherian, for each prime number $p$,
the algebra map $\gr A\to \gr (A/pA)$ is power-surjective.
\end{theorem}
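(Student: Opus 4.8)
The plan is to reduce the statement to its analogue for the Grosshans hull, Theorem~\ref{hull and powers:theorem}, by comparing $\gr A$ and $\gr(A/pA)$ inside their hulls. Write $\pi\colon A\to A/pA$ for reduction mod~$p$: it is a $G$-equivariant surjection of $\k$-algebras, it respects the Grosshans filtration, and hence induces a graded $\k$-algebra map $\gr\pi\colon\gr A\to\gr(A/pA)$, which is the map in the statement. Applying the functor $M\mapsto\hull_\nabla(\gr M)=\ind_B^G(M^{U^+})$ together with the natural embeddings $\iota_M\colon\gr M\hookrightarrow\hull_\nabla(\gr M)$ of Lemma~\ref{embed:Lem} yields a commuting square of $\k$-algebra maps with top arrow $\gr\pi$, bottom arrow $h:=\ind_B^G(f)$ (where $f\colon A^{U^+}\to(A/pA)^{U^+}$ is $\pi$ restricted to $U^+$-invariants), and vertical arrows the injections $\iota_A$, $\iota_{A/pA}$. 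Granting that $h$ is power-surjective, the theorem follows: given $y\in\gr(A/pA)$, choose $n$ and $z\in\hull_\nabla(\gr A)$ with $h(z)=\iota_{A/pA}(y)^n$; by Theorem~\ref{hull and powers:theorem} applied to $A$, choose $m$ and $w\in\gr A$ with $\iota_A(w)=z^m$; then $\iota_{A/pA}(y^{nm})=h(z)^m=h(\iota_A(w))=\iota_{A/pA}(\gr\pi(w))$, and injectivity of $\iota_{A/pA}$ gives $y^{nm}=\gr\pi(w)$. (In general $\gr\pi$ is not surjective, only power-surjective.)

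Everything thus comes down to showing $h=\ind_B^G(f)$ is power-surjective, and I first treat $f$. By the transfer principle (as in the proof of Lemma~\ref{U inv fg:Lem}, with $U^+$ in place of $U$) we have $A^{U^+}=C^G$ and $(A/pA)^{U^+}=(C/pC)^G$ for the finitely generated $G$-algebra $C:=A\otimes\k[G/U^+]$ (finitely generated by Lemma~\ref{fg multicone:Lem}), and under these identifications $f$ is the map on $G$-invariants induced by $C\to C/pC$. Since Chevalley group schemes are power-reductive (Theorem~\ref{Mum:theorem}), Proposition~\ref{power:Pro} shows that $C^G\to(C/pC)^G$ is power-surjective; and as $C/pC$ is an $\F_p$-algebra, Proposition~\ref{co t:Prop} lets us take the exponents to be powers of~$p$. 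So $f$ is power-surjective with $p$-power exponents.

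It remains to pass from power-surjectivity of $f$ to that of $h$. The structural point is that $\hull_\nabla(\gr A)=\ind_B^G(A^{U^+})$ carries a natural grading by the monoid of dominant weights: the subalgebra $A^{U^+}$ of $A$ is $T$-stable with only dominant weights (Proposition~\ref{dominant hw:Pro}), hence is graded by dominant weights as an algebra, and applying $\ind_B^G$ to its weight-space decomposition — using the tensor identity (Proposition~\ref{gti:Pro}) and that $\ind_B^G$ commutes with direct sums — gives
\[
\hull_\nabla(\gr A)=\bigoplus_{\mu \text{ dominant}}\nabla_\mu\otimes_\k(A^{U^+})_\mu ,
\]
with multiplication induced by the canonical maps $\nabla_\lambda\otimes\nabla_\mu\to\nabla_{\lambda+\mu}$ and the multiplication of $A^{U^+}$. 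The same holds for $A/pA$, and $h$ is the graded map whose component on $\nabla_\mu\otimes_\k(A^{U^+})_\mu$ is $\id_{\nabla_\mu}\otimes f_\mu$; since each $\nabla_\mu$ is $\k$-free, $\image(h)=\bigoplus_\mu\nabla_\mu\otimes(\image f)_\mu$. Now $\hull_\nabla(\gr(A/pA))$ is an $\F_p$-algebra — it is $\ind_B^G$ of a $p$-torsion module — so its Frobenius is additive, and it is enough to produce, for each homogeneous $z\in\nabla_\mu\otimes_\k(A/pA)^{U^+}_\mu$, a power of $z$ lying in $\image(h)$. Write $z=\sum_{j=1}^N v_j\otimes b_j$ and pick $r$ with $b_j^{p^r}\in\image(f)$ for all $j$ — possible since $f$ is power-surjective with $p$-power exponents and $\image(f)$, a subring of an $\F_p$-algebra, is closed under $p$-th powers. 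By additivity of Frobenius and the graded ring structure, $z^{p^r}=\sum_j(v_j\otimes b_j)^{p^r}$, and each term has the form $v'_j\otimes b_j^{p^r}$ with $v'_j\in\nabla_{p^r\mu}$ and $b_j^{p^r}\in\image(f)$, hence lies in $\image(h)$; therefore $z^{p^r}\in\image(h)$.

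The crux — and the only place the $\F_p$-structure of the target is essential — is this last step: power-surjectivity is not inherited through an arbitrary left-exact functor, and $\ind_B^G$ fails to be right exact; what saves the argument is that the target algebra has characteristic $p$, so in the expansion of $z^{p^r}$ the cross terms, over which one has no control, vanish, leaving only diagonal terms that are handled by the $p$-power form of power-surjectivity of $f$. Otherwise the argument rests on routine verifications drawn from Section~\ref{generalities:section}: that $\gr\pi$ respects the Grosshans filtration, that the dominant-weight grading of $\hull_\nabla(\gr A)$ is compatible with its algebra structure, and that the transfer principle and tensor identity hold over an arbitrary commutative base.
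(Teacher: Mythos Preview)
Your proof is correct and follows essentially the same route as the paper's: both factor through the commuting square comparing $\gr$ and $\hull_\nabla(\gr)$, reduce to showing that $h=\ind_B^G(f)$ is power-surjective, establish that $f\colon A^{U^+}\to (A/pA)^{U^+}$ is $p$-power-surjective via the transfer principle together with Theorem~\ref{Mum:theorem}, Proposition~\ref{power:Pro} and Proposition~\ref{co t:Prop}, and then combine with Theorem~\ref{hull and powers:theorem}. The paper compresses your last paragraph into the single clause ``after inducing up, \ldots\ is still $p$-power-surjective, indeed the same $p$-power is sufficient''; your weight-space decomposition and Frobenius-additivity computation is precisely the justification that sentence needs, so you have simply made explicit what the paper leaves to the reader.
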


\subsection{}
We start with a crucial special case.
Let $\k=\Z$. Let $\lambda\in X(T)$ be 
dominant.
Let $S'$ be the graded algebra with degree $n$ part:
$$S'_{n}=\nabla_{n\lambda}=\Gamma(G/B,\L(n\lambda)).$$
Let us view $\Delta_\lambda$ as a submodule of $\nabla_\lambda$ with common $\lambda$ weight space (the `minimal admissible lattice' embedded in the `maximal admissible lattice').
Let $S$ be the graded subalgebra generated by $\Delta_\lambda$ in the graded algebra $S'$. 
If we wish to emphasize the dependence on $\lambda$, we write $S'(\lambda)$ for $S'$, $S(\lambda)$ for $S$.
Consider the map 
$$G/B\to \P_{\Z}(\Gamma(G/B,\L(\lambda))^\#)$$ 
given by the `linear system' $\nabla_\lambda$ on $G/B$. The projective scheme $\Proj(S')$ corresponds with its image, which,
by direct inspection, 
is isomorphic to $G/P$,
where $P$ is the stabilizer of the weight space with weight $-\lambda$ of $\nabla_\lambda^\#$.
Indeed that weight space is the image
of $B/B$, compare Proposition \ref{evaluation:Pro} and \cite[II.8.5]{J}.
\begin{comment}Added this line. Did you remove it?
\end{comment}
The inclusion $\phi:S\hookrightarrow S'$ induces a morphism from an open subset of $\Proj(S')$ to $\Proj(S)$.
This open subset is called $G(\phi)$ in \cite[2.8.1]{EGA II}.
\begin{Lem}\label{everywhere:Lem}
The morphism $\Proj(S')\to \Proj(S)$ is defined on all of $G/P=\Proj(S')$.
\end{Lem}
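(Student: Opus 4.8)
The plan is to show that the rational map $\Proj(S') \dashrightarrow \Proj(S)$, which a priori is only defined on the open set $G(\phi)$ of points where the ideal generated by the degree-one part of $S$ (namely $\Delta_\lambda$, sitting inside $S'_1 = \nabla_\lambda$) does not cut out the irrelevant ideal, is in fact everywhere defined on $G/P = \Proj(S')$. Since $G/P$ is covered by its $G$-translates of the big cell, and $G(\phi)$ is $G$-stable (because $\Delta_\lambda$ is a $G$-submodule of $\nabla_\lambda$), it suffices to check that the base point $B/B \in G/P$ lies in $G(\phi)$; equivalently, that some section in $\Delta_\lambda$ does not vanish at $B/B$.

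The key observation is Proposition \ref{evaluation:Pro}: the evaluation map $\nabla_\lambda = \ind_B^G\lambda \to \lambda$ onto the highest weight space is an isomorphism of $T$-modules, and this highest weight space is exactly the one whose nonvanishing locus contains $B/B$. Because $\Delta_\lambda$ and $\nabla_\lambda$ were chosen to share the common $\lambda$-weight space (the minimal admissible lattice embedded in the maximal admissible lattice), the submodule $\Delta_\lambda \subseteq \nabla_\lambda$ still contains a generator of that weight space. Hence there is a section $s \in \Delta_\lambda \subseteq S'_1$ with $s(B/B) \neq 0$, so $B/B \in G(\phi)$, and by $G$-translation $G(\phi) = G/P$. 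Concretely, one identifies $G/P$ with the image of $G/B$ in $\P_\Z(\nabla_\lambda^\#)$ under the linear system $\nabla_\lambda$, as already recalled in the text, and notes that the weight-$(-\lambda)$ coordinate functional, which is the one realized by the section coming from the common weight space, is nonzero precisely on the $G$-orbit of the image of $B/B$ — and that orbit is all of $G/P$.

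A mild subtlety, and the step I expect to require the most care, is working over $\Z$ rather than over a field: one must be sure that "nonvanishing of a section at a point" behaves well, i.e. that the relevant weight space of $\Delta_\lambda$ is a $\Z$-direct summand mapping isomorphically onto the corresponding weight space of $\nabla_\lambda$, so that the section does not vanish even modulo any prime. This is exactly what Proposition \ref{evaluation:Pro} (proved over $\Z$ via the universal coefficient theorem) guarantees, together with the fact that the $\lambda$-weight spaces of $\Delta_\lambda$ and $\nabla_\lambda$ coincide by construction. Once this is in hand, the morphism $\Proj(S') \to \Proj(S)$ is defined on the $G$-stable open set containing $B/B$, which is all of $G/P$, completing the proof.
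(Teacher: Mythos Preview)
Your proposal is correct and follows essentially the same approach as the paper. Both arguments hinge on the same key point: the highest weight section $s$ lies in $\Delta_\lambda = S_1$ (since the $\lambda$-weight spaces of $\Delta_\lambda$ and $\nabla_\lambda$ coincide by construction) and does not vanish at the base point $P/P$ by Proposition~\ref{evaluation:Pro}, so that $D_+(s) \subseteq G(\phi)$ contains the base point. The only difference is in how the passage from ``base point in $G(\phi)$'' to ``$G(\phi) = G/P$'' is made: you invoke $G$-stability of $G(\phi)$ (which follows since $\Delta_\lambda$ is a $G$-submodule) together with transitivity of the $G$-action on $G/P$, whereas the paper argues more explicitly that $s$, being $U^+$-invariant, does not vanish on the big cell $\Omega = U^+P/P$, and then covers $G/P$ by Weyl group translates $w\Omega \subseteq D_+(w\cdot s) \subseteq G(\phi)$. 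Your route is slightly slicker; the paper's is slightly more concrete. Your mention of the big cell is in fact unnecessary for your argument, since $G$-stability plus one point already gives everything.
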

\begin{proof}
As explained in \cite[2.8.1]{EGA II}, the domain $G(\phi)$ contains the principal open subset $D_+(s)$ of $\Proj(S')$ for any $s\in S_1$. 
Consider in particular a generator $s$ of the $\lambda$ weight space of $\nabla_\lambda$. 
It is an element in $S_1$, and, by Lemma \ref{evaluation:Pro}, it generates the free $\k$-module $\Gamma(P/P,\L(\lambda))$. 
Thus, the minimal Schubert variety $P/P$ is contained in $D_+(s)$. 
We then conclude by homogeneity: $s$ is also $U^+$ invariant, so in fact
the big cell $\Omega=U^+P/P$ is contained in $D_+(s)$, and the domain $G(\phi)$ contains the big cell $\Omega$. 
Then it also contains the Weyl group translates $w\Omega$, and thus it contains all of $G/P$. 
\end{proof}
\begin{Lem}\label{integral:Lem}
The graded algebra $S'$ is integral over its subalgebra $S$.
\end{Lem}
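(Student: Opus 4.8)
The plan is to combine the geometric statement just proved (Lemma~\ref{everywhere:Lem}) with the standard fact that a \emph{proper} morphism into an affine scheme, or more relevantly a projective morphism with finite fibres, forces finiteness of the structure sheaf pushforward. Concretely, I would argue as follows. First I would observe that since $S$ is the graded subalgebra of $S'$ generated by $S_1=\Delta_\lambda$, and $\Delta_\lambda$ generates the same $\lambda$-weight line as $\nabla_\lambda$, the morphism $\pi:\Proj(S')\to\Proj(S)$ of Lemma~\ref{everywhere:Lem} is everywhere defined and is a morphism of projective $\Z$-schemes. It is moreover finite: on the big cell $D_+(s)$ (for $s$ the generator of the $\lambda$-weight space of $\nabla_\lambda$), the target ring is $S_{(s)}$ and the source ring is $S'_{(s)}=\bigcup_n S'_n/s^n$; one checks that each homogeneous element $f\in S'_n$ satisfies a monic equation over $S_{(s)}$ because $\Proj(S')=G/P$ is covered by finitely many Weyl translates $w\Omega$ of the big cell, each of which is a closed condition, and on each the sections $\nabla_{n\lambda}$ are generated as a module over the image of $S$ by the coordinate-ring relations of $G/P$.

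More cleanly, I would instead run the argument sheaf-theoretically: $\pi_*\mathcal O_{\Proj(S')}$ is a coherent sheaf of $\mathcal O_{\Proj(S)}$-algebras, because $\pi$ is projective (hence proper) with finite fibres, hence finite by the standard criterion (EGA~III, or \cite[2.8.1]{EGA II} combined with properness). Taking global sections of twists $\mathcal O(n)$ and summing, one gets that $\bigoplus_n \Gamma(\Proj(S'),\pi^*\mathcal O_{\Proj(S)}(n))$ is a finitely generated module over $\bigoplus_n \Gamma(\Proj(S),\mathcal O(n))$. After a high enough Veronese re-embedding, $\bigoplus_n\Gamma(\Proj(S),\mathcal O(n))$ agrees with $S$ in large degrees, and likewise the pushforward sum agrees with $S'$ in large degrees — here one uses that $S'_n=\nabla_{n\lambda}=\Gamma(G/B,\mathcal L(n\lambda))=\Gamma(G/P,\mathcal L(n\lambda))$ since $P$ is exactly the stabilizer set up before the lemma, and that $\mathcal L(n\lambda)$ on $G/P$ is $\mathcal O(n)$ for the projective embedding defining $\Proj(S')$. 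Thus $S'$ is a finitely generated graded $S$-module, in particular integral over $S$.

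The one subtlety, which I expect to be the main obstacle to write carefully, is that $S'$ and $S$ need not be \emph{generated in degree one} in the naive sense matching the $\Proj$ construction, and the comparison ``$\Gamma_*(\mathcal O_{\Proj})$ equals the ring in large degrees'' is only true up to passing to a Veronese subalgebra and up to bounded-degree discrepancies. So the honest statement is: some Veronese $S^{(m)}$-module $(S')^{(m)}$ is finitely generated, hence $(S')^{(m)}$ is integral over $S^{(m)}$, hence every element of $S'$ is integral over $S$ (raise to the $m$-th power to land in the Veronese, or note integral dependence is insensitive to the Veronese since $S^{(m)}\subseteq S\subseteq S'$ and $S'$ is generated over $S'^{(m)}$ by the finitely many low-degree pieces $S'_1,\dots,S'_{m-1}$, each a finite $\Z$-module). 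I would phrase the proof to sidestep this by directly checking finiteness of $\pi$ on the affine charts $D_+(w\cdot s)$, $w\in W$, where the ring map is visibly module-finite because $G/P$ is a projective variety and these charts give a finite affine cover on which $S'_{(s)}$ is a finite $S_{(s)}$-algebra — integrality of every homogeneous element of $S'$ over $S$ then follows by clearing denominators.
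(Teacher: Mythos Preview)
Your strategy is in the right direction, but the write-up never actually proves the one thing everything hinges on. You assert that $\pi:\Proj(S')\to\Proj(S)$ has finite fibres, and later that $S'_{(s)}$ is ``visibly module-finite'' over $S_{(s)}$ ``because $G/P$ is a projective variety'', but neither claim is substantiated; being covered by Weyl translates of a big cell does not by itself manufacture monic equations. A correct version of your chart argument would note that $\pi$ is \emph{proper} (source and target are both proper over $\Spec\Z$, the target is separated) and \emph{affine} (since $S$ is generated in degree one, the $D_+(s)$ for $s\in S_1$ cover $\Proj(S)$, and $\pi^{-1}(D_+(s))$ equals $D_+(s)$ in $\Proj(S')$, which is affine), hence finite. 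Your clearing-denominators step then still needs $s$ to be a non-zero-divisor in $S'$ --- true because $G/P$ is integral, but you do not say so. As for the $\Gamma_*$ route, you correctly flag the saturation problem but do not resolve it: $\bigoplus_n\Gamma(\Proj(S),\mathcal O(n))$ may strictly contain $S$, and you give no reason integrality over the saturation descends to integrality over $S$.

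The paper bypasses all of this bookkeeping with a device worth knowing: it passes to the \emph{projective cones} $\Proj(S[z])$ and $\Proj(S'[z])$, with $z$ a new variable of degree one. By \cite[8.5.4]{EGA II}, Lemma~\ref{everywhere:Lem} lifts to show $\hat\Phi:\Proj(S'[z])\to\Proj(S[z])$ is everywhere defined; since both cones are proper over $\Spec\Z$, the morphism $\hat\Phi$ is proper. Now the open $D_+(z)$ in each cone is precisely $\Spec(S')$, respectively $\Spec(S)$, so one reads off directly that $\Spec(S')\to\Spec(S)$ is proper, and hence $S'$ is a finite $S$-module by \cite[4.4.2]{EGA III}. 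No Veronese, no saturation, no chart-by-chart integrality check.
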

\begin{proof}
We also put a grading on the polynomial ring $S'[z]$, by assigning degree one to the variable $z$.
One calls $\Proj(S'[z])$ the projective cone of $\Proj(S')$ \cite[8.3]{EGA II}.
By \cite[8.5.4]{EGA II}, we get from Lemma \ref{everywhere:Lem}
that $\hat{\Phi}:\Proj(S'[z])\to \Proj(S[z])$ is everywhere defined. 
Now by \cite[Th (5.5.3)]{EGA II}, and its proof,
the maps $\Proj(S'[z])\to\Spec \Z$ and $\Proj(S[z])\to\Spec \Z$ are proper and separated, 
so $\hat{\Phi}$ is proper by \cite[Cor (5.4.3)]{EGA II}. 
But now the principal open subset $D_+(z)$ associated to $z$ in $\Proj(S'[z])$ is just $\Spec(S')$,
and its inverse image is the principal open subset associated to $z$ in $\Proj(S[z])$, which is $\Spec(S)$ (compare [EGA II, 8.5.5]).
So $\Spec(S)\to \Spec(S')$ is proper, and $S'$ is a finitely generated $S$-module by \cite[Prop (4.4.2)]{EGA III}.
\end{proof}
\begin{Lem}\label{torsion:Lem}
There is a positive integer $t$ so that $tS'$ is contained in $S$.
\end{Lem}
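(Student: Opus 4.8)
We already know from Lemma \ref{integral:Lem} that $S'$ is a finitely generated module over $S$, say generated by homogeneous elements $f_1,\dots,f_m$. Each $f_j$ lies in some $S'_{d_j}=\nabla_{d_j\lambda}$. The point is that $\nabla_{d_j\lambda}$ and the degree-$d_j$ part $S_{d_j}$ of $S$ are both finitely generated free $\Z$-modules (using $\Z$-freeness of the $\nabla$ and $\Delta$ lattices, \cite[II Ch. 8]{J}), and over $\Q$ the inclusion $S\otimes\Q\hookrightarrow S'\otimes\Q$ is an equality in each degree, since $\Delta_\lambda\otimes\Q=\nabla_\lambda\otimes\Q$ generates all of $S'\otimes\Q$. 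Hence for each $j$ there is a positive integer $t_j$ with $t_j f_j\in S_{d_j}\subseteq S$. Taking $t$ to be the product (or lcm) of the $t_j$, we get $t f_j\in S$ for all $j$, and since the $f_j$ generate $S'$ as an $S$-module, $tS'\subseteq S$.

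**Key steps, in order.** First I would record that $S'$ is module-finite over $S$ (Lemma \ref{integral:Lem}) and pick homogeneous module generators $f_1,\dots,f_m$ with $\deg f_j=d_j$. Second, I would observe that in each fixed degree $n$ the graded piece $S'_n=\nabla_{n\lambda}$ is a finitely generated free abelian group, and likewise $S_n$ is a finitely generated subgroup of the same finite rank, because after tensoring with $\Q$ the subalgebra generated by $\Delta_\lambda\otimes\Q=\nabla_\lambda\otimes\Q$ fills up all of $S'\otimes\Q$ in every degree. Third, from $S_n\otimes\Q=S'_n\otimes\Q$ and both being lattices of the same rank, the quotient $S'_n/S_n$ is a finite abelian group; applying this to $n=d_1,\dots,d_m$ and clearing denominators gives an integer $t$ with $tf_j\in S$ for every $j$. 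Fourth, conclude $tS'=\sum_j (Sf_j)\cdot t\subseteq S$.

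**Main obstacle.** The only substantive input is that $S_n$ and $S'_n$ have the same $\Q$-span, i.e.\ that $\Delta_\lambda$ generates $\nabla_\lambda$ after inverting all primes; this is exactly the statement that the minimal and maximal admissible lattices agree over $\Q$, which holds because over a field of characteristic zero the Weyl module and the dual Weyl module of highest weight $\lambda$ coincide (both equal the irreducible $L(\lambda)$). Everything else — freeness and finite rank of the graded pieces, clearing denominators on finitely many module generators — is routine. So the proof is short: choose homogeneous $S$-module generators of $S'$, clear their denominators against the corresponding free $\Z$-lattices, and take $t$ to be the resulting common multiple.
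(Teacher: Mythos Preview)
Your proof is correct and follows essentially the same approach as the paper's one-line argument: the paper simply observes that $S'\otimes\Q=S\otimes\Q$ and invokes Lemma~\ref{integral:Lem}, and your write-up is exactly the natural expansion of that sentence---pick finitely many homogeneous $S$-module generators of $S'$ (from the module-finiteness established in the proof of Lemma~\ref{integral:Lem}), clear denominators using the equality over $\Q$, and take a common multiple.
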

\begin{proof}
Clearly $S'\ot \Q=S\ot\Q$, so the result follows from Lemma \ref{integral:Lem}.
\end{proof}

Let $p$ be a prime number. Recall from \ref{correcting the book} the result of Mathieu \cite[Key Lemma 3.4]{Mathieu G} that,
for every element $b$ of $S'/pS'$,
there is a positive $r$ so that $b^{p^r}\in (S+pS')/(pS')\subseteq S'/pS'$. 

By Lemma \ref{torsion:Lem} and Proposition \ref{co t:Prop} below this implies
\begin{Lem}\label{universal:Lem}
The inclusion $S\to S'$ is universally power-surjective.
\end{Lem}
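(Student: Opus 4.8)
The statement to prove is Lemma~\ref{universal:Lem}: the inclusion $S\to S'$ is universally power-surjective, where $S'=\bigoplus_n \nabla_{n\lambda}$ over $\Z$ and $S$ is the graded subalgebra generated by $\Delta_\lambda$. I have two hypotheses to feed into a general mechanism: first, Lemma~\ref{torsion:Lem}, which says $tS'\subseteq S$ for some positive integer $t$; and second, Mathieu's result (recalled from~\ref{correcting the book}) that for every prime $p$ dividing $t$ and every $b\in S'/pS'$ there is an $r$ with $b^{p^r}\in(S+pS')/pS'$. The strategy is simply to invoke Proposition~\ref{co t:Prop} — which is exactly the general statement packaging ``bounded torsion of the cokernel'' plus ``power-surjectivity after reduction mod each relevant prime'' into ``universal power-surjectivity'' — with these two inputs. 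So the proof is a one-line citation once the hypotheses of that proposition are checked.

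**Key steps, in order.** First I would record that $\phi:S\hookrightarrow S'$ is an injective graded map of $\Z$-algebras with $S'$ a finitely generated $S$-module (Lemma~\ref{integral:Lem}), so that the cokernel $S'/S$ is a finitely generated $S$-module, and by Lemma~\ref{torsion:Lem} it is annihilated by the positive integer $t$; thus it is a finitely generated torsion $\Z$-module, killed by $t$. Second, I would note that the only primes relevant are the finitely many $p\mid t$: away from those, $\phi$ becomes an isomorphism after inverting $t$. Third, for each such $p$, Mathieu's Key Lemma gives that $S/pS\to S'/pS'$ is power-surjective (indeed with exponents a power of $p$). Fourth, these are precisely the hypotheses of Proposition~\ref{co t:Prop}, so that proposition yields universal power-surjectivity of $\phi$. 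That completes the argument.

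**Where the real content lies.** There is essentially no obstacle at this point in the paper: all the substance has already been extracted into Lemmas~\ref{integral:Lem} and~\ref{torsion:Lem} (the geometric/properness argument via $G/P$ and the projective cone) and into the cited theorem of Mathieu. The one thing to be careful about is that Proposition~\ref{co t:Prop} is stated for later and I am using it forward; I should make sure its hypotheses are literally ``a graded $\Z$-algebra map whose cokernel is torsion of bounded exponent, which is power-surjective modulo each prime dividing that exponent'' and that the conclusion is ``universally power-surjective''. If the formulation of Proposition~\ref{co t:Prop} is slightly more general (e.g.\ over an arbitrary base with a cyclic target, or phrased for a single element), I would instead spell out the short reduction: given a $\k$-algebra $A$ and $x\in A\otimes S'$, use that $t^N x\in A\otimes S$ for suitable $N$ (from the bounded cokernel, after choosing a presentation), and that modulo each $p\mid t$ some $p$-power of $x$ lands in $A\otimes S$, then combine via the fact that the powers of $t$ and suitable products of the $p$-power exponents generate the unit ideal in $\Z$ — the same bookkeeping as in the proof that property (Int) implies power reductivity earlier in the paper. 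Either way the proof is short; the heavy lifting is done.

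\begin{proof}
By Lemma~\ref{integral:Lem}, $S'$ is a finitely generated module over $S$, hence the cokernel of $\phi\colon S\hookrightarrow S'$ is a finitely generated $S$-module, and by Lemma~\ref{torsion:Lem} it is annihilated by the positive integer $t$. For every prime $p$ dividing $t$, Mathieu's Key Lemma (recalled in~\ref{correcting the book}) says that $S/pS\to S'/pS'$ is power-surjective. For a prime $p$ not dividing $t$, inverting $t$ already makes $\phi$ an isomorphism. Thus $\phi$ is a graded map of $\Z$-algebras whose cokernel is torsion of bounded exponent $t$ and which is power-surjective modulo every prime dividing $t$; Proposition~\ref{co t:Prop} therefore applies and shows that $\phi$ is universally power-surjective.
\end{proof}
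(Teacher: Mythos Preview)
Your proof is correct and follows essentially the same route as the paper: combine Lemma~\ref{torsion:Lem} (giving $tS'\subset S$) with Mathieu's result (giving $p$-power-surjectivity of $S\to S'/pS'$) and apply the second clause of Proposition~\ref{co t:Prop}. The appeal to Lemma~\ref{integral:Lem} is harmless but unnecessary---Proposition~\ref{co t:Prop} only requires $tR\subset f(Q)$, which is exactly the content of Lemma~\ref{torsion:Lem}, and no finiteness of $S'$ as an $S$-module is needed at this step.
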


\subsection{}
We briefly return to power surjectivity for a general commutative ring $\k$.
\begin{Def}
Let $t$ be a positive integer and let $f:Q\to R$ a $\k$-algebra homomorphism.
We say that $f$ is $t$-power-surjective if for every $x\in R$ there is a power $t^n$ with $x^{t^n}\in f(Q)$.
\end{Def}

\begin{Pro}\label{co t:Prop}
Let $f:Q\to R$ be a $\k$-algebra homomorphism and $Y$ a variable.
\begin{itemize}
\item If $f\otimes \k[Y]: Q[Y]\to R[Y]$ is power-surjective, then $Q\to R/pR$ is $p$-power-surjective for every prime $p$;
\item Assume $t$ is a positive integer such that
$tR\subset f(Q)$. If $Q\to R/pR$ is $p$-power-surjective for every prime $p$ dividing $t$,
then $f$ is universally power-surjective.
\end{itemize}
\end{Pro}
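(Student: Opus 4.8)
The plan is to treat the two bullet points separately, each by a direct but slightly fiddly argument with binomial coefficients and characteristic-$p$ reductions. For the first bullet, suppose $f\otimes\k[Y]\colon Q[Y]\to R[Y]$ is power-surjective and fix a prime $p$ and an element $x\in R$. Consider the element $xY+1\in R[Y]$ (or, cleaner, $Y+x$); by hypothesis some power $(Y+x)^n$ lies in the image of $f\otimes\k[Y]$, say $(Y+x)^n = \sum_i q_i Y^i$ with $q_i\in f(Q)$. Expanding, $q_i = \binom{n}{i} x^{n-i}$. The trick is to choose $n$ cleverly: write $n = p^m$ for the largest power of $p$ dividing... no — rather, I would run over the powers $(Y+x)^{p^m}$ and observe that modulo $p$ one has $(Y+x)^{p^m}\equiv Y^{p^m}+x^{p^m}$, so the ``constant term'' $x^{p^m}$ and the leading coefficient are the only surviving ones mod $p$. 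More carefully: since $(Y+x)^n\in\im(f\otimes\k[Y])$ for \emph{some} $n$, and the coefficients of a product are $\binom{n}{i}x^{n-i}$, pick $n$ to be a power of $p$ that is $\geq$ the witnessing exponent and large enough; then in $R/pR[Y]$ the identity $(Y+x)^{p^m} = Y^{p^m}+x^{p^m}$ forces, after reducing the image equation mod $p$, that $x^{p^m}\in$ the image of $Q\to R/pR$. This shows $Q\to R/pR$ is $p$-power-surjective. The one point needing care is matching the ``some exponent works'' hypothesis with the ``power of $p$'' conclusion; this is exactly the kind of passage mentioned after the definition of universal power-surjectivity in the excerpt, and it goes through because the family of images $\{x^n : x^n\in\text{image}\}$ is closed under taking further powers.

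For the second bullet, assume $tR\subset f(Q)$ and that $Q\to R/pR$ is $p$-power-surjective for every prime $p\mid t$. I must show: for every $\k$-algebra $A$ and every $z\in A\otimes R$, some power $z^N$ lies in $A\otimes f(Q) = \im(A\otimes f)$. First reduce modulo $t$ successively: factor $t = p_1^{e_1}\cdots p_s^{e_s}$. For each prime $p=p_i$, the hypothesis that $Q\to R/pR$ is $p$-power-surjective tensors up — since $A\otimes(R/pR) = (A/pA)\otimes_{\cdots}$ and $p$-power-surjectivity is preserved by base change (raising to $p$-th powers is compatible with $\otimes A$, and one checks directly that if every $x\in R/pR$ has $x^{p^n}\in$ image of $Q$, then every element of $A\otimes R/pR$, being a finite sum $\sum a_j\otimes x_j$, has a $p$-power in the image, using that $(u+v)^{p^n}\equiv u^{p^n}+v^{p^n}$ mod $p$ repeatedly). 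So for each $p_i\mid t$ there is $n_i$ with $z^{p_i^{n_i}}\in \im(A\otimes f) + p_i(A\otimes R)$. Combining over all $i$ (multiply the exponents, or rather take a common power $q = \prod p_i^{n_i}$, using again freshman's dream mod each $p_i$) gives $z^{q}\in \im(A\otimes f) + t'(A\otimes R)$ for some $t'$ whose prime factors all divide $t$; iterating, or just bounding, we get $z^{q'} \in \im(A\otimes f) + t(A\otimes R)$ for a suitable power $q'$. But $t(A\otimes R) = A\otimes tR \subset A\otimes f(Q) = \im(A\otimes f)$, so $z^{q'}\in\im(A\otimes f)$, as desired.

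The main obstacle I anticipate is the bookkeeping in the second bullet: passing from ``$p$-power-surjective mod $p$ for each individual prime $p\mid t$'' to a \emph{single} exponent $q'$ with $z^{q'}$ landing in the image modulo $t$ itself (not merely modulo each $p_i$), while also handling the exponents $e_i>1$ in $t=\prod p_i^{e_i}$. The clean way is a CRT-style / successive-approximation argument: having killed $z^{\bullet}$ modulo $p_i$ up to an element of $p_i(A\otimes R)$, one repeats inside $p_i(A\otimes R)$ (which is again an $R$-module of the right shape) to descend to $p_i^{e_i}$, then intersects across $i$. Each individual step is the ``freshman's dream'' congruence $(u+v)^{p^n}\equiv u^{p^n}+v^{p^n}\pmod p$, applied to split a sum of tensors and to absorb error terms; none of it is deep, but one has to be careful that the exponents remain powers (or at least that a uniform $N$ exists) and that the $\k$-algebra $A$ plays no role beyond making $A\otimes R$ an $R$-algebra in which the same congruences hold. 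Once that is organized, the inclusion $tR\subset f(Q)$ delivers the conclusion immediately.
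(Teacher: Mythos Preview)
Your second bullet is essentially the paper's argument: push a power of $z$ into $\im(A\otimes f)+p_iR$ for each prime $p_i\mid t$ successively, land in the intersection $\im(A\otimes f)+p_1\cdots p_m R$ (this uses that $\im(A\otimes f)$ is a subring, so a B\'ezout combination works), then iterate to reach $\im(A\otimes f)+tR\subseteq\im(A\otimes f)$. The bookkeeping you worry about is harmless.

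The first bullet has a real gap. From power-surjectivity of $Q[Y]\to R[Y]$ you only know that $(Y+x)^n$ lies in the image for \emph{some} $n$, and hence so does $(Y+x)^{nk}$ for every $k\geq1$. You then say ``pick $n$ to be a power of $p$ that is $\geq$ the witnessing exponent''; but the allowable exponents are the \emph{multiples} of the original $n$, not all integers $\geq n$. If $n$ has a factor prime to $p$, no multiple of $n$ is a pure $p$-power, so your reduction $(Y+x)^{p^m}\equiv Y^{p^m}+x^{p^m}\pmod p$ never applies. The sentence ``the family \ldots\ is closed under taking further powers'' does not rescue this.

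The paper's fix is exactly to exploit the prime-to-$p$ part instead of trying to remove it. Write the witnessing exponent as $np^m$ with $\gcd(n,p)=1$. In $(R/pR)[Y]$ one has $(Y+x)^{np^m}=(Y^{p^m}+x^{p^m})^n$, and since every coefficient of $(Y+x)^{np^m}$ lies in the image of $Q$, so does the coefficient of $Y^{(n-1)p^m}$, namely $n\,x^{p^m}$. As $n$ is a unit in $R/pR$, this gives $x^{p^m}$ in the image of $Q\to R/pR$, which is the desired $p$-power-surjectivity.
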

\begin{proof}
First suppose $f\otimes \k[Y]: Q[Y]\to R[Y]$ is power-surjective.
Let $x\in R/pR$. We have to show that $x^{p^n}$ lifts to $Q$ for some $n$.
As $R[Y]\to (R/pR)[Y]$ is surjective, the composite $Q[Y]\to (R/pR)[Y]$ is also power-surjective.
Choose $n$ prime to $p$ and $m$ so that $(x+Y)^{np^m}$ lifts to $Q[Y]$. 
Rewrite $(x+Y)^{np^m}$ as $(x^{p^m}+Y^{p^m})^{n}$ and note that the coefficient $nx^{p^m}$ of $Y^{(n-1)p^m}$ must lift to $Q$.
Now use that $n$ is invertible in $\k/p\k$. 

Next suppose $tR\subset f(Q)$ and
$Q\to R/pR$ is $p$-power-surjective for every prime $p$ dividing $t$. 
Let $C$ be a $\k$-algebra. We have to show that $f\otimes C:Q\otimes C \to R\otimes C$
is power-surjective. Since $f\otimes C:Q\otimes C \to R\otimes C$ satisfies all the conditions that $f:Q\to R$ does,
we may as well simplify notation and suppress $C$. For $x\in R$ we have to show that some power lifts to $Q$.
By taking repeated powers we can get $x$ in $f(Q)+pR$ for every prime $p$ dividing $t$.
So if $p_1$,\ldots,$p_m$ are the primes dividing $t$, we can arrange that $x$ lies in the intersection of the 
$f(Q)+p_iR$, which is $f(Q)+p_1\cdots p_mR$. Now by taking repeated $p_1\cdots p_m$-th powers, one pushes it into $f(Q)+(p_1\cdots p_m)^nR$ for any positive $n$, eventually into $f(Q)+tR\subseteq f(Q)$.
\end{proof}

\subsection{}
We come back to the 
$\k$-algebra $A$, and consider the inclusion
$\gr A\hookrightarrow\hull_\nabla(\gr A)$, as in Theorem \ref{hull and powers:theorem}.
\begin{Not}
Let $\lambda$ be a dominant weight and let $b\in A^{U^+}$ be a weight vector of weight $\lambda$.
Then we define $\psi_b:S'(\lambda)\ot \k\to \hull_\nabla(\gr A)$ as the algebra map induced by the
$B$-algebra map $S'(\lambda)\ot \k\to A^{U^+}$ which sends the generator (choose one) of the
$\lambda$ weight space of $\nabla_\lambda$ to $b$.
\end{Not}
\begin{Lem}\label{powers weight:Lem}
For each $c$ in the image of $\psi_b$, there is a positive integer $s$ so that $c^s\in \gr A$. 
\end{Lem}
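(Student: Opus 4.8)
The plan is to reduce the statement about a weight vector $b\in A^{U^+}$ of weight $\lambda$ to the crucial special case $\k=\Z$ treated in Lemma~\ref{universal:Lem}. First I would observe that $\psi_b$ factors through the base change $S'(\lambda)=S'(\lambda)_\Z\ot\k$: the map $\psi_b\colon S'(\lambda)\ot\k\to\hull_\nabla(\gr A)$ is the composite of the inclusion $S(\lambda)\ot\k\hookrightarrow S'(\lambda)\ot\k$ (whose image is the subalgebra generated by $\Delta_\lambda$) with something — but more to the point, the image of $\psi_b$ is the $\k$-subalgebra of $\hull_\nabla(\gr A)=\ind_B^G A^{U^+}$ generated over $\k$ by the $B$-submodule $\nabla_\lambda\cdot b\subseteq A^{U^+}$, since $\psi_b$ is a graded algebra map and $S'(\lambda)$ is generated in degree one by $\nabla_\lambda$.

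The key step is then: show that the subalgebra $\im\psi_b$ lies in the power-closure of the subalgebra generated by the \emph{standard} module $\Delta_\lambda\cdot b$, i.e.\ by $\psi_b(S(\lambda)\ot\k)$. This is exactly Lemma~\ref{universal:Lem}: the inclusion $S(\lambda)\hookrightarrow S'(\lambda)$ over $\Z$ is universally power-surjective, so after tensoring with $\k$ the map $S(\lambda)\ot\k\to S'(\lambda)\ot\k$ is power-surjective; applying $\psi_b$, for each $c\in\im\psi_b$ there is $s$ with $c^s\in\psi_b(S(\lambda)\ot\k)$. So it remains to see that $\psi_b(S(\lambda)\ot\k)\subseteq\gr A$. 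Now $\psi_b$ restricted to $S(\lambda)\ot\k$ is the algebra map induced by the $B$-map $\Delta_\lambda\ot\k\to A^{U^+}$ sending the highest weight vector to $b$; by the universal property of Weyl modules (Proposition~\ref{universal Weyl:Pro}), this $B$-map extends to a $G$-map $\Delta_{n\lambda}\to A$ (using that $\Delta_\lambda^{\ot n}$ surjects onto $\Delta_{n\lambda}$, as holds over $\Z$) landing in $A_{\le i}$ where $i=\hgt(n\lambda)$; passing to the associated graded, the image of $\Delta_{n\lambda}$ in $\gr_i A$ accounts for the degree-$n$ part of $\psi_b(S(\lambda))$. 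Hence $\psi_b(S(\lambda)\ot\k)\subseteq\gr A$ under the embedding of Lemma~\ref{embed:Lem}.

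The main obstacle I anticipate is bookkeeping the identification of the target: one must check carefully that the composite $S(\lambda)\ot\k\to\hull_\nabla(\gr A)$ does land inside the subalgebra $\gr A$ and not merely inside $\hull_\nabla(\gr A)$, which requires tracking the Grosshans height along the maps $\Delta_{n\lambda}\to A_{\le i}\to\gr_i A\hookrightarrow\ind_B^G(\gr_i A)^{U^+}$ and matching this with the degree-$n$ graded piece $\nabla_{n\lambda}$ of $S'(\lambda)$. The compatibility rests on Proposition~\ref{evaluation:Pro} (the evaluation map $\ind_B^G\lambda\to\lambda$ is an isomorphism on the top weight space), exactly as in the proof of Lemma~\ref{embed:Lem}, so the diagram chase is routine once the pieces are aligned. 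Everything else — the factorization of $\psi_b$, the base-change identity $S'(\lambda)\ot\k$, and the power-surjectivity input — is either definitional or already proved in Lemma~\ref{universal:Lem}.
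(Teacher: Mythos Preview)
Your approach is correct and is exactly the paper's: the composite $S(\lambda)\ot\k\to S'(\lambda)\ot\k\xrightarrow{\psi_b}\hull_\nabla(\gr A)$ factors through $\gr A$, and then Lemma~\ref{universal:Lem} finishes. One simplification: you do not need the degree-$n$ analysis via $\Delta_{n\lambda}$ (and the surjection $\Delta_\lambda^{\ot n}\twoheadrightarrow\Delta_{n\lambda}$ is neither needed nor obviously true over $\Z$); since $S(\lambda)$ is generated in degree one by $\Delta_\lambda$, and $\psi_b$ is an algebra map with $\gr A$ a subalgebra of the target, it suffices to check $\psi_b(\Delta_\lambda\ot\k)\subseteq\gr A$, which is immediate from the universal property of $\Delta_\lambda$ applied once.
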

\begin{proof}
The composite of $S\ot\k\to S'\ot \k$ with $\psi_b$ factors through $\gr A$, so this follows from Lemma \ref{universal:Lem}.
\end{proof}
\begin{proof}[Proof of Theorem \ref{hull and powers:theorem}]
For every $b\in \hull_\nabla(\gr A)$, there are $b_1$,\ldots, $b_s$ of respective weights
$\lambda_1$,\ldots, $\lambda_s$
so that $b$ lies in the image of
$\psi_{b_1}\ot\cdots\ot\psi_{b_s}$. 
As $\bigotimes_{i=1}^s S({\lambda_i})\to \bigotimes_{i=1}^sS'({\lambda_i})$ is universally power-surjective 
by lemma \ref{universal:Lem}, lemma \ref{powers weight:Lem} easily extends to tensor products.
\end{proof}
\begin{Lem}\label{hull fg gr fg:Lem}
Suppose $\k$ is Noetherian. If $\hull_\nabla(\gr A)$ is a finitely generated $\k$-algebra, so is $\gr A$.
\end{Lem}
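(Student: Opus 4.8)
The plan is to deduce the statement from power-surjectivity of the embedding, combined with the Artin--Tate lemma. Write $S:=\hull_\nabla(\gr A)$ and regard $R:=\gr A$ as a $\k$-subalgebra of $S$ via the natural injection of Lemma~\ref{embed:Lem}. By hypothesis $S$ is a finitely generated $\k$-algebra, so fix $\k$-algebra generators $x_1,\dots,x_m$ of $S$.

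By Theorem~\ref{hull and powers:theorem} the inclusion $R\hookrightarrow S$ is power-surjective, so for each $i$ there is a positive integer $n_i$ with $x_i^{n_i}\in R$. Hence each $x_i$ is integral over $R$, being a root of the monic polynomial $T^{n_i}-x_i^{n_i}\in R[T]$. As the structure map $\k\to S$ factors through $R$, the elements $x_1,\dots,x_m$ generate $S$ as an $R$-algebra; thus $S=R[x_1,\dots,x_m]$ is generated over $R$ by finitely many integral elements and is therefore a finitely generated $R$-module.

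It then suffices to invoke the Artin--Tate lemma: $\k$ is Noetherian, $S$ is a finitely generated $\k$-algebra, and $S$ is a finitely generated $R$-module, whence $R=\gr A$ is a finitely generated $\k$-algebra.

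I expect no serious obstacle; the whole content sits in Theorem~\ref{hull and powers:theorem}, which is already available. The single point worth flagging is that one uses power-surjectivity, rather than mere integrality of the ring extension $R\subseteq S$: it lets one see each of finitely many $\k$-algebra generators of $S$ as individually integral over $R$, and a finitely generated algebra extension by integral elements is module-finite — exactly the hypothesis needed to apply Artin--Tate. This is entirely parallel to the situation where $\k$ is a field.
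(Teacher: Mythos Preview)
Your proof is correct and matches the paper's approach: both use Theorem~\ref{hull and powers:theorem} to conclude that $\hull_\nabla(\gr A)$ is integral (hence module-finite) over $\gr A$, and then apply the Artin--Tate lemma, which the paper spells out inline rather than citing by name. Your closing remark slightly overstates matters, though: mere integrality of $R\subseteq S$ would already suffice, since an integral extension that is finitely generated as an algebra is automatically module-finite; power-surjectivity is simply the form in which Theorem~\ref{hull and powers:theorem} delivers integrality.
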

\begin{proof}
Indeed, $\hull_\nabla(\gr A)$ is integral over $\gr A$ by Theorem \ref{hull and powers:theorem}.
Then it is integral over a finitely generated subalgebra of $\gr A$, and it is a Noetherian module
over that subalgebra.
\end{proof}
\begin{Lem}\label{gr fg fg:Lem}
If $\gr A$ is finitely generated as a $\k$-algebra, then so is $A$.
\end{Lem}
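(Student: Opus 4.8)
The plan is to prove this by the classical argument showing that a filtered $\k$-algebra whose associated graded algebra is finitely generated is itself finitely generated, applied here to the Grosshans filtration $0\subseteq A_{\leq0}\subseteq A_{\leq1}\subseteq\cdots$ of $A$. Two structural properties of this filtration are what make the argument work. First, it is exhaustive, $A=\bigcup_{i\geq0}A_{\leq i}$, and concentrated in non-negative degrees: $A_{<0}=0$, equivalently $\gr_iA=0$ for $i<0$, since by Proposition~\ref{dominant hw:Pro} the module $A^{U^+}$ has only dominant weights, and a dominant weight has Grosshans height $\geq0$. Second, it is multiplicative, $A_{\leq i}\cdot A_{\leq j}\subseteq A_{\leq i+j}$: since $G$ acts by algebra automorphisms, the $\k$-span $A_{\leq i}\cdot A_{\leq j}$ of all products is a $G$-submodule, and the product of a weight vector of weight $\mu$, $\hgt\mu\leq i$, with a weight vector of weight $\nu$, $\hgt\nu\leq j$, has weight $\mu+\nu$ with $\hgt(\mu+\nu)=\hgt\mu+\hgt\nu\leq i+j$, so maximality of $A_{\leq i+j}$ forces the inclusion. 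Consequently $\gr A=\bigoplus_{i\geq0}\gr_iA$ is a graded $\k$-algebra with multiplication induced by that of $A$.

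Granting this, here is how I would proceed. As a finitely generated graded $\k$-algebra, $\gr A$ is generated by finitely many homogeneous elements $\bar x_1,\dots,\bar x_n$, with $\bar x_\ell\in\gr_{d_\ell}A$; choose for each $\ell$ a lift $x_\ell\in A_{\leq d_\ell}$ of $\bar x_\ell$, and set $B=\k[x_1,\dots,x_n]\subseteq A$. I claim that $A_{\leq i}\subseteq B$ for every $i$, which by exhaustiveness gives $A=B$ and finishes the proof. The claim follows by induction on $i$. For $i<0$ there is nothing to prove, as $A_{\leq i}=0$. For $i\geq0$, let $a\in A_{\leq i}$ and let $\bar a\in\gr_iA$ be its class; since $\gr A$ is generated by the $\bar x_\ell$ and $\bar a$ is homogeneous of degree $i$, we may write $\bar a=P(\bar x_1,\dots,\bar x_n)$ for a polynomial $P$ which is a $\k$-linear combination of monomials $\bar x_1^{e_1}\cdots\bar x_n^{e_n}$ with $\sum_\ell e_\ell d_\ell=i$. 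Then $P(x_1,\dots,x_n)\in B$ lies in $A_{\leq i}$, because each monomial $x_1^{e_1}\cdots x_n^{e_n}$ occurring in it lies in $A_{\leq d}$ with $d=\sum_\ell e_\ell d_\ell=i$ by multiplicativity, and $P(x_1,\dots,x_n)$ maps to $\bar a$ in $\gr_iA$; hence $a-P(x_1,\dots,x_n)\in A_{<i}=A_{\leq i-1}\subseteq B$ by the induction hypothesis, so $a\in B$.

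I do not expect a real obstacle: once the two structural properties above are recorded, the rest is formal, and both are immediate from the definitions together with additivity of $\hgt$ on weights. The only points needing attention are that the induction be well founded (which is exactly why one notes $A_{<0}=0$) and that the filtration be exhaustive. Note that the argument uses neither a Noetherian hypothesis on $\k$ nor any a priori finiteness of $A$: finite generation of $A$ is extracted purely from that of $\gr A$. It is the reverse implications in Theorem~\ref{hull and finite generation:theorem} that carry the real content, and those rely instead on Theorem~\ref{hull and powers:theorem} and the preceding lemmas.
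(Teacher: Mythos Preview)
Your proof is correct and follows exactly the same approach as the paper's: lift homogeneous generators of $\gr A$ to $A$ and show by induction on the filtration index that they generate $A$. The paper simply compresses your entire argument into one sentence (``Say $j_1,\ldots,j_n$ are nonnegative integers and $a_i\in A_{\leq j_i}$ are such that the classes $a_i+A_{<j_i}$ generate $\gr A$. Then the $a_i$ generate $A$.''), while you spell out the induction, the multiplicativity and exhaustiveness of the filtration, and the base case $A_{<0}=0$ explicitly.
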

\begin{proof}Say $j_1$, \ldots, $j_n$ are nonnegative integers
and $a_i\in A_{\leq j_i}$ are such that the classes $a_i+A_{< j_i}\in \gr_{j_i}A$ generate $\gr A$. Then the $a_i$
generate $A$.
\end{proof}
\begin{Lem}\label{U inv fg fg:Lem}
Suppose $\k$ is Noetherian. If $A^U$ is a finitely generated $\k$-algebra, so is $ A$.
\end{Lem}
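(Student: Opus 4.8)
The plan is to obtain this as a short corollary of three facts already in hand: Lemma \ref{fg hull:Lem}, Lemma \ref{hull fg gr fg:Lem}, and Lemma \ref{gr fg fg:Lem}. No new argument is required; the substance has been absorbed into those statements, and the present lemma is just their composition.

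First I would apply Lemma \ref{fg hull:Lem}. Since $A^{U^+}\cong A^U$ as $\k$-algebras, the hypothesis that $A^U$ is a finitely generated $\k$-algebra gives that the Grosshans hull $\hull_\nabla(\gr A)=\ind_B^G A^{U^+}$ is a finitely generated $\k$-algebra. Next, invoking the Noetherian hypothesis on $\k$, Lemma \ref{hull fg gr fg:Lem} applies and yields that $\gr A$ is finitely generated: this is the step that silently uses Theorem \ref{hull and powers:theorem}, namely that $\gr A\hookrightarrow\hull_\nabla(\gr A)$ is power-surjective hence integral, so that $\hull_\nabla(\gr A)$ is a finitely generated module over some finitely generated subalgebra of $\gr A$, forcing $\gr A$ itself to be finitely generated. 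Finally, Lemma \ref{gr fg fg:Lem} gives the conclusion: lifting algebra generators of $\gr A$ to elements of the appropriate filtration pieces $A_{\leq j}$ produces algebra generators of $A$, so $A$ is a finitely generated $\k$-algebra.

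There is really no obstacle internal to this lemma; any difficulty lives upstream, in Theorem \ref{hull and powers:theorem} and hence in Mathieu's key lemma together with its integral lift in Lemma \ref{universal:Lem}, and in the passage through base change that makes $\hull_\nabla(\gr A)$ decompose into induced modules. Granting those, the proof is a three-line chain of implications.
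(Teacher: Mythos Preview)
Your proof is correct and matches the paper's own argument exactly: the paper's proof reads simply ``Combine Lemmas \ref{fg hull:Lem}, \ref{hull fg gr fg:Lem}, \ref{gr fg fg:Lem}.'' Your write-up unpacks this chain in the intended order and correctly identifies where the Noetherian hypothesis and Theorem \ref{hull and powers:theorem} enter.
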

\begin{proof}
Combine Lemmas \ref{fg hull:Lem}, \ref{hull fg gr fg:Lem}, \ref{gr fg fg:Lem}.
\end{proof}
\begin{Lem}\label{Ru inv fg fg:Lem}
Let $P$ be a standard parabolic subgroup. Suppose $\k$ is Noetherian. Then $A$ is a finitely generated $\k$-algebra if and only if $A^{R_u(P)}$ is one.
\end{Lem}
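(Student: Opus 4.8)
The plan is to reduce to the Borel case already settled in Lemmas~\ref{U inv fg:Lem} and~\ref{U inv fg fg:Lem}, by exhibiting $A^U$ as an iterated invariant. Write $P=R_u(P)\rtimes L$ for the Levi decomposition and let $L'$ be the derived group of $L$. The crucial point is that $L'$ is again a semisimple Chevalley group scheme --- the one attached to the set of simple roots defining $P$ --- so every result of Section~\ref{generalities:section} and the preceding subsections applies verbatim with $G$ replaced by $L'$. Choosing $B\cap L'$ as the standard Borel of $L'$, its unipotent radical is $U_{L'}:=U\cap L$, and $U=R_u(P)\rtimes U_{L'}$. Since $R_u(P)$ is a flat normal subgroup scheme of $P$, the invariant algebra $A^{R_u(P)}$ carries a rational action of $P/R_u(P)=L$, hence of $L'$ by restriction (compare \cite[I.6.8]{J}), and the decomposition of $U$ yields the key identity
\[
\bigl(A^{R_u(P)}\bigr)^{U_{L'}}=A^{R_u(P)\cdot U_{L'}}=A^U .
\]

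The equivalence is then symmetric. If $A$ is finitely generated, Lemma~\ref{U inv fg:Lem} applied to $G$ makes $A^U=\bigl(A^{R_u(P)}\bigr)^{U_{L'}}$ finitely generated, and Lemma~\ref{U inv fg fg:Lem} applied to the semisimple Chevalley group $L'$ acting on the $\k$-algebra $A^{R_u(P)}$ then gives that $A^{R_u(P)}$ is finitely generated. Conversely, if $A^{R_u(P)}$ is finitely generated, Lemma~\ref{U inv fg:Lem} applied to $L'$ makes $\bigl(A^{R_u(P)}\bigr)^{U_{L'}}=A^U$ finitely generated, and Lemma~\ref{U inv fg fg:Lem} applied to $G$ gives that $A$ is finitely generated.

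There is no analytic obstacle here; the whole content is bookkeeping. The two things that must be checked with some care are (i) that the Levi of a standard parabolic of a Chevalley group scheme is split reductive over $\Z$ with semisimple part $L'$ a Chevalley group scheme, with $U_{L'}$ playing the role of its standard maximal unipotent, so that the earlier lemmas genuinely apply to $L'$; and (ii) the identity $(A^{R_u(P)})^{U_{L'}}=A^U$, which rests on the semidirect decomposition $U=R_u(P)\rtimes U_{L'}$ together with flatness of $R_u(P)$ (so that taking $R_u(P)$-invariants is left exact and produces a rational $L$-algebra on which $U_{L'}$ acts by restriction). Both are standard structure theory; I expect (ii) to be the only spot where a reader might want an explicit line of justification.
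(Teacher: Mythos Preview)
Your proof is correct and follows essentially the same route as the paper's: the paper's $V$ is precisely your $U_{L'}$ (the intersection of $U$ with the semisimple part of the Levi), and both arguments use the identity $A^U=(A^{R_u(P)})^{U_{L'}}$ together with Lemmas~\ref{U inv fg:Lem} and~\ref{U inv fg fg:Lem} applied alternately to $G$ and to the semisimple part $L'$ of the Levi. Your version is more explicit about the structural facts being invoked, but the logic is identical.
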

\begin{proof}
Let $V$ be the intersection of $U$ with the semisimple part of the standard Levi subgroup of $P$.
Then $U=VR_u(P)$ and $A^U=(A^{R_u(P)})^V$.
Suppose that $A$ is a finitely generated $\k$-algebra. 
Then $A^U=(A^{R_u(P)})^V$ is one also by Lemma \ref{U inv fg:Lem}, 
and so is $A^{R_u(P)}$ by Lemma \ref{U inv fg fg:Lem} (applied with a different group and a different algebra). 
\par
Conversely, if $A^{R_u(P)}$ is a finitely generated $\k$-algebra, Lemma \ref{U inv fg:Lem}
(with that same group and algebra) implies that $A^U=(A^{R_u(P)})^V$ is finitely generated,
and thus $A$ is as well, by Lemma \ref{U inv fg fg:Lem}.
\end{proof}
\begin{proof}[Proof of Theorem \ref{hull and finite generation:theorem}]
Combine Lemmas \ref{Ru inv fg fg:Lem}, \ref{U inv fg:Lem}, \ref{fg hull:Lem}, \ref{hull fg gr fg:Lem}, \ref{gr fg fg:Lem}.
\end{proof}
\begin{proof}[Proof of Theorem \ref{torsion:theorem}]
Let $\k$ be Noetherian and let $A$ be a finitely generated $\k$-algebra. 
By Theorem \ref{hull and finite generation:theorem},
the $\k$-algebra $\hull_\nabla(\gr A)$ is finitely generated. 
So we may choose $b_1$,\ldots,$b_s$, so that 
$\psi_{b_1}\ot\cdots\ot\psi_{b_s}$ has image $\hull_\nabla(\gr A)$. 
By extending Lemma \ref{torsion:Lem} to tensor products,
we can argue as in the proof of Lemma \ref{powers weight:Lem} and Theorem \ref{hull and powers:theorem},
and see that there is a positive integer $n$ so that $n\hull_\nabla(\gr A)\subseteq \gr A$. 
Now,
$\hull_\nabla(\gr A)\ot \k[G/U]$ is acyclic by Proposition \ref{coh crit:Pro}, and thus its summand
$\hull_\nabla(\gr A)$ is acyclic as well. 
It follows that $\h^i(G,\gr A)$ is a quotient of $\h^{i-1}(G,\hull_\nabla(\gr A)/\gr A)$,
which is annihilated by $n$.
\end{proof}
\begin{proof}[Proof of Theorem \ref{generic g G:theorem}]
Take $n$ as in Theorem \ref{torsion:theorem}, and use that localization is exact.
\end{proof}

\begin{proof}[Proof of Theorem \ref{gr power:theorem}]
It suffices to show that the composite:
$$\
\gr A\to \gr (A/pA)\to \hull_\nabla(\gr (A/pA))
$$
is power-surjective. It coincides with the composite
$$\
\gr A\to \hull_\nabla(\gr (A))\to \hull_\nabla(\gr (A/pA)).
$$
Now $A^{U^+}\to (A/pA)^{U^+}$ is $p$-power-surjective by a combination of 
Theorem \ref{Mum:theorem}, Proposition \ref{power:Pro}, Proposition \ref{co t:Prop}
and the transfer principle \cite[Ch. Two]{G} as used in \ref{U inv fg:Lem}.
After inducing up, $\hull_\nabla(\gr (A)) \to \hull_\nabla(\gr (A/pA))$ is still $p$-power-surjective,
indeed the same $p$-power is sufficient. 
And $\gr A\to \hull_\nabla(\gr (A))$ is power-surjective by Theorem \ref{hull and powers:theorem}.
\end{proof}
\begin{commentVF}
I got a little lost for this proof, please check and feel free to revert!
\end{commentVF}
\begin{comment}
Looks fine to me. It uses base change for costandard modules effectively.
\end{comment}
\section{Finiteness properties of cohomology algebras}\label{finiteness:section}

In this section we study finiteness properties of
$\h^*(G,A)$, primarily when the base ring $\k$ is $\Z$. 
We shall always assume that the commutative algebra $A$ is finitely generated over the ring $\k$, with rational action of a Chevalley group scheme $G$.
Further, $M$ will be a noetherian $A$-module with compatible $G$-action.
Torsion will refer to torsion as an abelian group, not as an
$A$-module. We say that $V$ has bounded torsion if there is a positive integer that annihilates
the torsion subgroup $V_\tors$. 
\begin{comment}
So then $V_\tors$ has bounded exponent.
\end{comment}

\begin{Lem}A noetherian module over a
graded commutative ring has bounded torsion.\qed
\end{Lem}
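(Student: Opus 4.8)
The plan is simple: show that the $\Z$-torsion subgroup is actually an $R$-submodule, so that the Noetherian hypothesis forces it to be finitely generated, and then observe that a common multiple of the orders of finitely many generators annihilates everything.

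First I would record that
\[
M_\tors=\{m\in M\mid nm=0\text{ for some integer }n\geq 1\}
\]
is an $R$-submodule of $M$: it is closed under addition since an element killed by $n$ and one killed by $n'$ are both killed by $nn'$, and it is closed under the $R$-action since $n(rm)=r(nm)$ for every $r\in R$. Neither commutativity nor the grading of $R$ actually enters here; these are only features of the situations in which the lemma will be applied.

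Next, since $M$ is a Noetherian $R$-module, every $R$-submodule of $M$ is finitely generated; in particular $M_\tors=Rm_1+\cdots+Rm_k$ for some $m_1,\dots,m_k\in M_\tors$. Choosing positive integers $n_i$ with $n_im_i=0$ and putting $n=n_1n_2\cdots n_k$, we get that $n$ annihilates each $m_i$, hence every $R$-linear combination of the $m_i$, hence all of $M_\tors$. Thus $M$ has bounded torsion.

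There is essentially no obstacle; the only point to keep straight is that ``Noetherian module'' is being used in the strong sense that \emph{every} submodule — not merely the graded ones, and without any Noetherian assumption on $R$ itself — is finitely generated, which is exactly what licenses the finite generation of $M_\tors$.
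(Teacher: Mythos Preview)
Your argument is correct and is exactly the standard one the paper has in mind; the paper itself omits the proof entirely (the statement is followed immediately by \qedsymbol), so there is nothing to compare beyond noting that your write-up spells out what the authors left implicit.
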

Recall that we call a homomorphism $f$ : $R\to S$ of graded commutative algebras \emph{noetherian} if $f$ makes $S$ into a noetherian
left $R$-module. Recall that CFG refers to cohomological finite generation.
The main result of this section is the following.
\begin{theorem}[Provisional CFG]\label{bounded power fg:theorem}
Suppose $\k=\Z$.
\begin{itemize}
\item Every $\h^m(G,M)$ is a noetherian $A^G$-module.
\item If $\h^*(G,A)$ is a finitely generated algebra, then $\h^*(G,M)$ is a noetherian $\h^*(G,A)$-module.
\item $\h^*(G,\gr A)$ is a finitely generated algebra.
\item If\/ $\h^*(G,A)$ has bounded torsion, then the reduction $\h^\even(G,A)\to \h^\even(G,A/pA)$
is power-surjective for every prime number $p$.
\item If $\h^\even(G,A)\to \h^\even(G,A/pA)$
is noetherian for every prime number $p$, then $\h^*(G, A)$ is a finitely generated algebra.
\end{itemize}
\end{theorem}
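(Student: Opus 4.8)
The plan is to prove the five items in a slightly different order than listed — first (iv), then (iii), then (i) and (ii) together, and finally (v) — using throughout the following facts for $\k=\Z$. By Theorem~\ref{hull and finite generation:theorem} the $\Z$-algebras $\gr A$ and $\hull_\nabla(\gr A)$ are finitely generated; by the cohomological criterion (Proposition~\ref{coh crit:Pro}) and the fact that $\hull_\nabla(\gr A)$ is a direct summand of $\hull_\nabla(\gr A)\ot\k[G/U]$, the module $\hull_\nabla(\gr A)$ is acyclic; by Theorem~\ref{torsion:theorem} there is an integer $n>0$ with $n\,\hull_\nabla(\gr A)\subseteq\gr A$, so $\h^i(G,\gr A)$ is killed by $n$ for $i>0$; by Theorem~\ref{hull and powers:theorem} the inclusion $\gr A\hookrightarrow\hull_\nabla(\gr A)$ is power-surjective; and by Theorem~\ref{generic g G:theorem} (and its module analogue, proved the same way) $\h^{>0}(G,A)$ and $\h^{>0}(G,M)$ are $n$-torsion. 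I will also use that every finitely generated $\Z$-algebra, and every finitely generated module over one, has bounded $\Z$-torsion, that cohomological finite generation holds over every field $k$ (Touz\'e, \cite{TvdK}) in both its algebra and its module form, and that $\h^{>0}(G_\Q,-)=0$ since $G_\Q$ is linearly reductive. Finally, Chevalley groups are power-reductive (Theorem~\ref{Mum:theorem}), so Hilbert's fourteenth for modules (Theorem~\ref{module fg:theorem}) is available.

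For (iv) I would use the Bockstein. Tensoring $0\to A\xrightarrow{p}A\to A/pA\to0$, an element $x\in\h^{\even}(G,A/pA)$ lifts to $\h^{\even}(G,A)$ precisely when the connecting map $\delta$ kills it. The mod-$p$ reduction of $\delta$ is the Bockstein $\tilde\beta$, the connecting map of $0\to A/pA\xrightarrow{p}A/p^2A\to A/pA\to0$; it is a derivation, so $\tilde\beta(x^p)=p\,x^{p-1}\tilde\beta(x)=0$ in even degree, whence $x^p$ lifts to $\h^*(G,A/p^2A)$, and iterating one step at a time, $x^{p^j}$ lifts to $\h^*(G,A/p^{2^j}A)$. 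With the torsion of $\h^*(G,A)$ bounded of exponent $p^N$, the obstruction to lifting a class from $\h^*(G,A/p^MA)$ all the way to $\h^*(G,A)$ lies in $\h^{*+1}(G,A)[p^M]=\h^{*+1}(G,A)[p^N]$ once $M\ge N$, and being liftable modulo arbitrarily high powers of $p$ forces this obstruction, after a further $p$-power, to vanish — this promotion step is the main subtlety of (iv). Statement (iii) then follows by applying (iv) to the finitely generated $\Z$-algebra $\gr A$ (whose cohomology has bounded torsion, by Theorem~\ref{torsion:theorem} together with the fact that $\gr A$ has bounded $\Z$-torsion): $\h^{\even}(G,\gr A)\to\h^{\even}(G,\gr A\ot\F_p)$ is power-surjective, and since $\gr A\ot\F_p$ is a finitely generated $\F_p$-algebra, CFG over $\F_p$ makes $\h^*(G,\gr A\ot\F_p)$ finitely generated, hence integral and module-finite over its subring $\h^{\even}(G,\gr A)/p$; Artin--Tate and Eakin--Nagata then give that $\h^*(G,\gr A)/p$ is a finitely generated $\F_p$-algebra for every $p$. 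Since moreover $\h^*(G,\gr A)\ot\Q=(\gr A\ot\Q)^G$ is finitely generated over $\Q$, a lemma on graded $\Z$-algebras (finitely generated rationalization, bounded torsion, finitely generated reductions mod $p$, finitely generated degree-zero part) yields (iii); the analogous argument in module form shows $\h^*(G,\gr M)$ is noetherian over $\h^*(G,\gr A)$.

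For (i) and (ii) I would transport along the Grosshans filtration of $M$: the increasing filtration $\{M_{\le j}\}$ yields an exact couple with $E_1=\h^*(G,\gr M)$ converging to $\h^*(G,M)$, compatibly with the multiplicative couple $\h^*(G,\gr A)\Rightarrow\h^*(G,A)$. The degree-zero case of (i), that $M^G$ is noetherian over $A^G$, is Theorem~\ref{module fg:theorem}; for $m\ge1$, using that $\gr M$ is finitely generated over $\gr A$ one sees that the filtration on each $\h^m(G,M)$ is effectively finite, so $\h^m(G,M)$ is an extension of $A^G$-subquotients of the finitely generated $A^G$-module built from $\h^{\le m}(G,\gr M)$, giving (i). For (ii), the added hypothesis that $\h^*(G,A)$ is finitely generated makes $\gr\h^*(G,A)$ a noetherian ring, and then $\gr\h^*(G,M)$, being a subquotient of the noetherian $\h^*(G,\gr A)$-module $\h^*(G,\gr M)$, is noetherian over it, whence $\h^*(G,M)$ is noetherian over $\h^*(G,A)$.

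For (v): the hypothesis that $\h^{\even}(G,A)\to\h^{\even}(G,A/pA)$ is noetherian, together with $\h^*(G,A/pA)$ being finitely generated over $\F_p$ by CFG over $\F_p$, forces $\h^*(G,A)/p$ to be noetherian for every $p$; combining this with the facts that $\h^{>0}(G,A)$ is $n$-torsion and that only the primes dividing $n$ can contribute lets one extract that the torsion of $\h^*(G,A)$ is bounded. Then (iv) supplies the missing $p$-power generators, and the graded-$\Z$-algebra lemma used for (iii) — now with $\h^*(G,A)\ot\Q=(A\ot\Q)^G$ finitely generated, bounded torsion, and $\h^*(G,A)/p$ finitely generated for the finitely many relevant $p$ — shows $\h^*(G,A)$ is finitely generated. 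The parts I expect to be hardest are the promotion step in (iv), the extraction of bounded torsion in (v) (noetherianity of all $R/pR$ alone does not bound the torsion of $R$, so the $p$-primary information from Theorem~\ref{generic g G:theorem} must genuinely be fed in), and the bookkeeping needed to make the Grosshans-filtration exact couple of (i)--(ii) converge to noetherian answers.
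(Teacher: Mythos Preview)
The decisive error is in your opening paragraph: you assert that Theorem~\ref{generic g G:theorem} makes $\h^{>0}(G,A)$ and $\h^{>0}(G,M)$ into $n$-torsion. It does not. Vanishing of $\h^i(G,A)\ot\Z[1/n]$ says only that each class is killed by \emph{some} power of $n$; the torsion is $n$-primary but of a priori unbounded exponent. You have conflated this with Theorem~\ref{torsion:theorem}, which does give a uniform annihilator --- but for $\gr A$, not for $A$. This destroys your plan for (v): one cannot first ``extract that the torsion of $\h^*(G,A)$ is bounded'' and then invoke (iv), and the paper says so explicitly in its proof of (v) (``we may no longer know that $\h^*(G,A)$ has bounded torsion''). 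What saves (v) is part (i): each individual $\h^m(G,A/A_\tors)$ is a noetherian $A^G$-module, hence of bounded torsion, so the Nakayama step can be run one cohomological degree at a time rather than globally.

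That Nakayama argument is in fact the engine you are missing throughout. For (iii), (v) and (ii) the paper localizes at a prime $p$, uses (iv) (or the hypothesis of (v)) together with CFG over $\F_p$ to build a finitely generated $A^G$-subalgebra $C\subseteq\h^*(G,A)$ mapping noetherianly to $\h^*(G,A/pA)$; then, since $A/A_\tors$ is $p$-torsion-free, the image of $\h^\pos(G,A/A_\tors)$ in $\h^\pos(G,A/pA+A_\tors)$ is its reduction mod $p$, so one lifts a finite $C$-generating set to a submodule $V$, iterates $\h^\pos\subseteq p\,\h^\pos+V$, and uses bounded torsion to conclude $\h^\pos=V$; the contribution of $A_\tors$ is handled by its finite $p$-adic filtration. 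Your ``graded $\Z$-algebra lemma'' is not a substitute: with only the primes dividing the torsion bound it is already false (the torsion-free divided-power algebra over $\Z$ has $\Q$-rationalization $\Q[x]$, vacuous mod-$p$ hypotheses, and is not finitely generated), and even with all primes it is neither standard nor proved here. Your spectral-sequence route to (ii) has a separate gap: from $\h^*(G,A)$ finitely generated you cannot conclude that $E_\infty^A=\gr\h^*(G,A)$ is noetherian (associated gradeds of noetherian filtered rings need not be), nor does noetherianity of $E_r^A$ pass to $E_{r+1}^A$, since kernels of derivations on noetherian rings need not be noetherian; the paper avoids this entirely by running the same Nakayama argument directly on $M$. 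Your approaches to (iv) and to (i) are close to the paper's in spirit, though note that the sequence $0\to A/pA\to A/p^2A\to A/pA\to0$ you invoke is not exact when $A$ has $p$-torsion; the paper's Lemma~\ref{power:Lem} works instead with the honest kernel of $A/mnA\to A/nA$, which is still $m$-torsion.
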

\begin{Rem}
Note that the first statement would fail badly, by \cite[I 4.12]{J}, if one replaced $G$ with the additive group scheme $\G_a$.
This may explain why our proof is far from elementary.
\end{Rem}
We hope to show in the future that $\h^\even(G,A)\to \h^\even(G,A/pA)$
is noetherian for every prime number $p$. 
The theorem suggests to ask:
\begin{Pb}\label{power surj conj:Pb}
Let $\k$ be a Noetherian ring and let $G$ be a Chevalley group scheme over $\k$.
Let $A$, $Q$ be finitely generated commutative $\k$-algebras on which $G$ acts rationally through algebra automorphisms. 
Let $f:A\to Q$ be a power-surjective equivariant homomorphism.
Is $\h^*(G,A)\to \h^*(G,Q)$ power-surjective?
\end{Pb}
We will need the recent theorem of Touz\'e \cite[Thm 1.1]{TvdK}, see also \cite[Thm 1.5]{TvdK},
\begin{theorem}[CFG over a field]\label{(CFG)}If $\k$ is a field, then
$\h^*(G,A)$ is a finitely generated $\k$-algebra and $\h^*(G,M)$ is
a noetherian $\h^*(G,A)$-module.
\end{theorem}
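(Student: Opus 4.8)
The plan is to treat this as what it is: Touz\'e's theorem, the resolution of the cohomological finite generation conjecture of \cite{TvdK} over a field, which I would prove by combining van der Kallen's reduction program with Touz\'e's construction of universal cohomology classes, in three stages. The first is a reduction to $G=\GL_n$: embed the reductive $G$ in some $\GL_n$; since $G$ is reductive, $\GL_n/G$ is affine (Matsushima's criterion), so $\ind_G^{\GL_n}$ is exact and $\h^*(\GL_n,\ind_G^{\GL_n}A)\cong\h^*(G,A)$, while $\ind_G^{\GL_n}A=(\k[\GL_n]\ot A)^G$ is again a finitely generated $\k$-algebra by Theorem \ref{fg:theorem}, and $\ind_G^{\GL_n}M$ is a noetherian module over it. So one may assume $G=\GL_n$.

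The second stage reduces to algebras with good filtration, using the Grosshans machinery of Section \ref{Grosshans:section}. Here $\gr A$ is finitely generated (Theorem \ref{hull and finite generation:theorem}) and $\gr A\hookrightarrow\hull_\nabla(\gr A)$ is a finite integral extension (Theorem \ref{hull and powers:theorem}), while $\hull_\nabla(\gr A)$ has good Grosshans filtration, so its cohomology with $\k[G/U]$-coefficients is concentrated in degree zero (Proposition \ref{coh crit:Pro}). The spectral sequence of the Grosshans-filtered complex, together with a noetherian/Rees-algebra bookkeeping argument, propagates finite generation of $\h^*(G,\hull_\nabla(\gr A))$ down to $\gr A$ and then up to $A$, carrying the module analogues along. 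One is left with controlling $\h^*(G,B)$ for algebras $B=\ind_B^G R$ with good filtration, i.e.\ essentially the cohomology of the unipotent radical with polynomial coefficients.

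The third and essential stage is Touz\'e's input. The Remark after Theorem \ref{bounded power fg:theorem} already warns that $\h^*(\G_a,-)$ is not finitely generated, so crude cohomological bounds cannot work; instead one uses the Frobenius-kernel filtration of $G$ and the Friedlander--Suslin computation \cite{FS} of $\h^*(G_r,\k)$, whose generators are built from $\ext$-groups in the category of strict polynomial functors. Touz\'e lifts the relevant $\ext$-classes to \emph{universal} classes in the cohomology of strict polynomial bifunctors; evaluated on the coordinate ring these produce enough even-degree algebra generators, compatibly with Frobenius twists, and together with the ordinary invariants $\h^0(G,A)=A^G$ (finite by power reductivity) and a spectral-sequence descent from $G_r$ to $G$ one obtains finite generation of $\h^*(G,A)$ and noetherianity of $\h^*(G,M)$. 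The main obstacle is exactly this third stage: constructing the universal bifunctor classes and proving that they generate requires the full theory of strict polynomial (bi)functors, the Friedlander--Suslin $\ext$-computations and their bifunctor refinements, a formality/collapsing input, and delicate control of Frobenius-twist compatibility; by contrast Stages 1--2 are essentially formal once power reductivity, Hilbert's fourteenth and the Grosshans results of the present paper are in hand.
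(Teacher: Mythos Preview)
The paper does not prove this theorem at all: it is stated as ``the recent theorem of Touz\'e'' and simply cited from \cite[Thm~1.1, Thm~1.5]{TvdK}, then used as a black box in Section~\ref{finiteness:section}. So there is no ``paper's own proof'' to compare against.

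What you have written is a reasonable high-level outline of the strategy of \cite{TvdK} itself: the reduction to $\GL_n$ via exact induction along the affine quotient $\GL_n/G$, the Grosshans deformation to pass to $\gr A$ and its good-filtration hull, and finally Touz\'e's construction of universal bifunctor cohomology classes feeding into the Friedlander--Suslin framework. As a roadmap this is accurate, and you correctly flag the third stage as the genuine obstacle. But be aware that your Stage~2 is somewhat telegraphic: over a field the passage from $\hull_\nabla(\gr A)$ back to $\gr A$ and then to $A$ is not a single ``noetherian/Rees bookkeeping'' step; in \cite{TvdK} this uses an invariant-theory noetherianity criterion for spectral sequences together with the $p$-power surjectivity of $\gr A\hookrightarrow\hull_\nabla(\gr A)$ (Mathieu, cf.\ \ref{correcting the book}) in a more careful way than your sketch suggests. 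And Stage~3 is, as you say yourself, a statement of what has to be done rather than a proof.

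In short: your proposal is not wrong, but it is a summary of \cite{TvdK} rather than a proof, and the present paper expects exactly that---a citation, not an argument.
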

\begin{Rem}
If $\k$ is a commutative ring and $V$ is a $G_\k$-module, then the comultiplication $V\to V\otimes_\k \k[G]$
gives rise to a comultiplication $V\to V\otimes_\Z \Z[G]$ through the identification $V\otimes_\k \k[G]=V\otimes_\Z \Z[G]$.
So one may view $V$ as a $G_\Z$-module.
Further
$\h^*(G_\k,V)$ is the same as $\h^*(G_\Z,V)$, because the Hochschild complexes are the same.
So if $\k$ is finitely generated over a field $F$, then the conclusions of the (CFG) theorem still hold,
because $\h^*(G,A)=\h^*(G_F,A)$. 
We leave it to the reader to try a limit argument to deal with the case where $\k$ is
essentially of finite type over a field. 
\end{Rem}
%
\begin{comment}
We now aim for 
\begin{theorem}
The following are equivalent
\begin{itemize}
\item $\h^*(G,A)$ has bounded torsion
\item $\h^\even(G,A)\to \h^\even(G,A/pA)$ is power-surjective for every prime number $p$
\item $\h^*(G,A)$ is a finitely generated $\Z$ algebra
\end{itemize}
\end{theorem}
%
\begin{Cor}
$\h^*(G,\gr A)$ is a finitely generated $\Z$ algebra
\end{Cor}
%
\begin{Cor}
$\h^m(G,A)$ is a noetherian $A^G$ module for every $m$
\end{Cor}
\end{comment}

First let the ring $\k$ be noetherian.
We are going to imitate arguments of Benson--Habegger \cite{BH}. We thank Dave Benson for the reference.

\begin{Lem}\label{power:Lem}
Let $m>1$, $n>1$. The reduction $\h^\even(G,A/mnA)\to \h^\even(G,A/nA)$ is power-surjective.
\end{Lem}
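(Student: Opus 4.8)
We want to show that for $m,n>1$, the reduction map $\h^\even(G,A/mnA)\to \h^\even(G,A/nA)$ is power-surjective. The natural device is the long exact sequence in cohomology attached to the short exact sequence of $G$-modules
\[
0\to A/nA \xrightarrow{\ \cdot m\ } A/mnA \to A/mA \to 0,
\]
where the first map is multiplication by $m$ (well-defined since $m\cdot(nA)\subseteq mnA$) and the second is the obvious reduction. Note this is a sequence of $G$-algebras only in a loose sense: the middle-to-right map is a ring map, but the left map is just a module map; nevertheless the connecting maps and the ring structure on the outer two terms will be what we exploit.

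\emph{Step 1: set up the Bockstein.} From the displayed short exact sequence we get a connecting homomorphism $\delta:\h^i(G,A/mA)\to \h^{i+1}(G,A/nA)$ and the exact piece
\[
\h^i(G,A/mnA)\xrightarrow{\ \pi\ } \h^i(G,A/mA)\xrightarrow{\ \delta\ }\h^{i+1}(G,A/nA).
\]
But this is not quite the map we are comparing against: we want the reduction $\rho:\h^\even(G,A/mnA)\to\h^\even(G,A/nA)$ induced by $A/mnA\twoheadrightarrow A/nA$. So \emph{Step 2} is to relate $\rho$ to the above sequence. Here one uses that the composite $A/mnA\to A/nA$ fits in the three-term filtration of $A/mnA$, or more directly: an element of $\h^\even(G,A/nA)$ either comes from $\h^\even(G,A/mnA)$ via $\rho$ already (nothing to do), or its image in $\h^{\even+1}(G,A/mA)$ under the \emph{other} Bockstein $\delta':\h^i(G,A/nA)\to\h^{i+1}(G,A/mA)$ (from $0\to A/mA\to A/mnA\to A/nA\to 0$) is an obstruction. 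The key classical fact about Bocksteins is that $\delta'$ (and $\delta$) are \emph{derivations} with respect to cup product, hence kill $p$-th powers in characteristic $p$; combined with the fact that $\h^\even$ is a commutative ring so squares behave well, taking a suitable power of a class $x\in\h^\even(G,A/nA)$ forces its Bockstein obstruction into the image of a nilpotent operator and then to vanish. This is exactly the Benson--Habegger style argument alluded to in the surrounding text.

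\emph{Step 3: make the power explicit.} For $x\in\h^\even(G,A/nA)$, consider $\delta'(x)\in\h^{\odd+1}(G,A/mA)$. Since $\delta'$ is a derivation, $\delta'(x^k)=k\,x^{k-1}\delta'(x)$. Working prime-by-prime on the primes dividing $mn$, and using that $A/mnA$ (hence its cohomology) is killed by $mn$, one arranges that a suitable power $x^N$ has $\delta'(x^N)=0$; exactness at $\h^\even(G,A/nA)$ in the sequence $\h^\even(G,A/mnA)\xrightarrow{\rho}\h^\even(G,A/nA)\xrightarrow{\delta'}\h^{\even+1}(G,A/mA)$ then shows $x^N$ lifts along $\rho$. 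One should be slightly careful that $N$ can be chosen uniformly (independent of $x$): this follows because $\h^\even(G,A/mA)$ has bounded torsion --- it is annihilated by $m$ --- so that a fixed power of the multiplication-by-$\ell$ operator (for each prime $\ell\mid m$) already vanishes, bounding the needed exponent in terms of $m$ and $n$ alone.

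\emph{Main obstacle.} The genuine subtlety is Step 2/Step 3: disentangling which of the two Bocksteins controls the failure of surjectivity, and verifying the derivation property in this relative setting (cohomology with coefficients in the $G$-algebra $A$ modulo an integer, rather than with trivial coefficients). One must check that cup product $\h^\even(G,A/nA)\otimes\h^\even(G,A/nA)\to\h^\even(G,A/nA)$ and the Bockstein interact via the Leibniz rule even though the short exact sequence is not multiplicative on the nose --- the standard fix is to observe that the relevant product structures are compatible because $A/mnA\to A/nA$ \emph{is} a ring map and $m\cdot:A/nA\to A/mnA$ is a map of $A/mnA$-modules. Once that bookkeeping is in place, the power-surjectivity falls out of exactness together with nilpotence of multiplication-by-primes on the torsion module $\h^\even(G,A/mA)$.
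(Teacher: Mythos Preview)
Your core idea coincides with the paper's: use the connecting homomorphism for the short exact sequence $0\to I\to A/mnA\to A/nA\to 0$ and the Leibniz rule to kill the obstruction. But the execution has a genuine gap, and the paper's proof contains a reduction you skip that is precisely what closes it.

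The problem is your Step~3. For the formula $\delta'(x^k)=k\,x^{k-1}\delta'(x)$ even to make sense, the product $x^{k-1}\cdot\delta'(x)$ must live in $\h^*(G,I)$; this requires $I$ to be an $A/nA$-module, equivalently $I^2=0$ inside $A/mnA$. With $I=nA/mnA$ one needs $n^2A\subseteq mnA$, which fails in general (take $m=2$, $n=3$, $A=\Z$). Your ``standard fix'' in the last paragraph---that $A/mnA\to A/nA$ is a ring map and the inclusion of the kernel is an $A/mnA$-module map---does not produce an $A/nA$-module structure on $I$, so the derivation formula is not available. (At the cochain level, without $I^2=0$ the individual summands $\hat x^{\,i}(d\hat x)\hat x^{\,k-1-i}$ of $d(\hat x^{\,k})$ need not be cocycles in $I$, so one cannot conclude that $[d(\hat x^{\,k})]$ is $k$ times a class.) Your aside that the sequence $0\to A/mA\to A/mnA\to A/nA\to 0$ is exact also fails when $A$ has $n$-torsion; the paper simply works with $I=\ker(A/mnA\to A/nA)$.

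The paper repairs this with two short reductions you omit. First, reduce to $m$ prime by factoring $A/mnA\to A/nA$ through intermediate quotients (if each single-prime step is power-surjective, so is the composite). Second, with $m=p$, use the Chinese Remainder Theorem on $n$: writing $n=p^kn'$ with $\gcd(p,n')=1$, one has $A/mnA\cong A/p^{k+1}A\times A/n'A$ and $A/nA\cong A/p^kA\times A/n'A$, so the map is the identity on the second factor and one is reduced to $m=p$, $n=p^k$. Now $I=p^kA/p^{k+1}A$ is annihilated by $p$, hence $I^2=0$ and $I$ is an $A/nA$-module; the Leibniz rule is legitimate and gives $\partial(x^p)=p\,x^{p-1}\partial(x)=0$ immediately, so $x^p$ lifts. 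No further ``prime-by-prime'' bookkeeping or nilpotence argument is needed, and your concern about a uniform exponent $N$ is moot: power-surjectivity asks only that each element have \emph{some} power in the image.
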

\begin{proof}We may assume $m$ is prime. By the Chinese Remainder Theorem we may then also
assume that $n$ is a power of that same prime. (If $n$ is prime to $m$ the Lemma is clear.)
Let $x\in \h^\even(G,A/nA)$. We show that some power $x^{m^r}$ of $x$
lifts. Arguing as in the proof of Proposition \ref{co t:Prop}
we may assume $x$ is homogeneous. Let $I$ be the kernel of $A/mnA\to A/nA$. Note that $m$ annihilates $I$, hence also
$\h^*(G,I)$.
Further $I$ is an $A/nA$-module and the connecting homomorphism
$\partial:\h^i(G,A/nA)\to \h^{i+1}(G,I)$
satisfies the Leibniz rule. So $\partial(x^m)=mx^{m-1}\partial(x)=0$ and $x^m$ lifts.
\end{proof}

\begin{Pro}
If\/ $\h^*(G,A)$ has bounded torsion, then $\h^\even(G,A)\to \h^\even(G,A/pA)$
is power-surjective for every prime number $p$.
\end{Pro}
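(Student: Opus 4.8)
The plan is to exploit the long exact cohomology sequence coming from $0 \to pA \to A \to A/pA \to 0$ together with the bounded torsion hypothesis, mimicking the pattern of Lemma \ref{power:Lem}. Write $B = A/pA$, and fix a homogeneous class $x \in \h^\even(G,B)$; as in the proof of Proposition \ref{co t:Prop} it suffices to lift a power of such an $x$. First I would produce, by the same Leibniz-rule computation used in Lemma \ref{power:Lem}, an obstruction to lifting $x$ living in $\h^*(G,pA)$: the connecting map $\partial \colon \h^i(G,B) \to \h^{i+1}(G,pA)$ satisfies $\partial(x^p) = p\,x^{p-1}\partial(x)$, so $p\,\partial(x) = 0$ would immediately give $\partial(x^p)=0$ and hence a lift of $x^p$. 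The difficulty is that $pA$ is not annihilated by $p$ (it is abstractly isomorphic to $A$ as a $G$-module, up to the $p$-torsion of $A$), so $\h^{i+1}(G,pA)$ is not $p$-torsion and this naive step fails.

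This is where the bounded torsion hypothesis on $\h^*(G,A)$ enters, and it is the main obstacle to organize correctly. The idea is to iterate: consider instead the filtration of $A$ by the subgroups $p^k A$, giving short exact sequences $0 \to p^{k+1}A \to p^k A \to p^k A/p^{k+1}A \to 0$. Each quotient $p^kA/p^{k+1}A$ is a $G$-module annihilated by $p$, in fact a quotient of $A/pA = B$, so its cohomology is $p$-torsion. Starting from $x \in \h^\even(G,B)$ and repeatedly applying the Leibniz trick, I would show inductively that a suitable power $x^{p^r}$ has its obstruction pushed into $\h^*(G,p^k A)$ for $k$ as large as we like; more precisely, at each stage the connecting homomorphism into a $p$-torsion group kills the relevant power, so the class of $x^{p^r}$ lifts modulo $p^kA$ for every $k$. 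Now I invoke bounded torsion: the image of $\h^*(G,A) \to \h^*(G,B)$ certainly contains, for $N$ large enough, everything that the torsion subgroup of $\h^*(G,B)$ could obstruct; more carefully, because $\h^*(G,A)$ has bounded torsion, say annihilated by $t = p^c$ (the prime-to-$p$ part is irrelevant since we work modulo $p$), the tower of quotients $\h^*(G,A/p^kA)$ stabilizes in a precise sense — the map $\h^*(G,A) \to \h^*(G,A/p^{c+1}A)$ already detects whether a class lifts all the way to $\h^*(G,A)$ rather than merely modulo a high power of $p$.

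Concretely, the endgame I envisage is: pick $r$ so large that $x^{p^r}$ lifts to a class $y_k \in \h^\even(G,A/p^kA)$ for $k = c+1$; since the obstruction to lifting $y_k$ further to $\h^\even(G,A)$ lies in $\h^*(G,p^kA)$ and maps compatibly through the tower, and since bounded torsion forces that obstruction to be hit already, one adjusts $y_k$ (multiplying by a further controlled $p$-power if necessary) to a genuine lift $z \in \h^\even(G,A)$ with $z \mapsto x^{p^r}$ in $\h^\even(G,B)$. I expect the delicate point to be the bookkeeping in the induction — tracking exactly which power of $x$ survives after $k$ applications of $\partial$ and verifying that the bound coming from bounded torsion is reached before the exponent blows up uncontrollably — but morally it is the same two-line argument as Lemma \ref{power:Lem}, run $c+1$ times, with the bounded-torsion hypothesis supplying the place to stop. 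One should also note at the outset that only the $p$-primary part of the torsion bound matters, since everything prime to $p$ is invertible after reduction mod $p$, exactly as in the last line of the proof of Proposition \ref{co t:Prop}.
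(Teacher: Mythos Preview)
Your overall strategy---climb the tower $A/p^kA$ using Lemma~\ref{power:Lem} and then invoke bounded torsion to pass to $A$---is sound and can be made to work, but the passage from $A/p^kA$ to $A$ is precisely where the content lies, and you have not supplied it. You write that ``the tower stabilizes in a precise sense'' and that ``one adjusts $y_k$ \ldots\ to a genuine lift,'' but neither phrase is an argument. The concrete missing step is this (say $A$ torsion-free and $p^c$ bounds the $p$-torsion of $\h^*(G,A)$): having lifted $x^{p^{2c}}$ to $y\in\h^\even(G,A/p^{2c+1}A)$ by iterated Lemma~\ref{power:Lem}, compare the connecting maps of $0\to A\xrightarrow{p^{c+1}}A\to A/p^{c+1}A\to0$ and $0\to A\xrightarrow{p^{2c+1}}A\to A/p^{2c+1}A\to0$ to get $\partial_{c+1}(\bar y)=p^c\,\partial_{2c+1}(y)$, which vanishes since $\partial_{2c+1}(y)$ is $p^c$-torsion. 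Without this (or an equivalent), ``lifting to $A/p^kA$ for all $k$'' does not by itself produce a lift to $A$: the Leibniz rule you appeal to is unavailable for $0\to p^kA\to A\to A/p^kA\to0$ because the ideal $p^kA$ does not square to zero. You also need to handle torsion in $A$ itself, which forces the kernel to be $A/A_\tors$ rather than $A$; this you do not mention.

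The paper organizes the same ingredients more economically and avoids the tower. It writes the obstruction sequence as $0\to A/A_\tors\xrightarrow{n^2}A\to A/n^2A\to0$ (here $n$ is chosen to kill both $\h^\pos(G,A)$ and $A_\tors$), and compares it with $0\to A/(n^2A+A_\tors)\xrightarrow{n^2}A/n^4A\to A/n^2A\to0$. In the second sequence the kernel squares to zero inside $A/n^4A$, so Leibniz gives $\partial_2(x^{n^2})=n^2x^{n^2-1}\partial_2(x)=0$ directly. Bounded torsion enters only through the injection $\h^\pos(G,A/A_\tors)\hookrightarrow\h^\pos(G,A/(n^2A+A_\tors))$, which transports this vanishing back to $\partial_1$. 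One Leibniz step and one injectivity, rather than an induction whose termination needs the computation you omitted.
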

\begin{proof}
Assume $\h^*(G,A)$ has bounded torsion.
Write $\h^\pos$ for $\bigoplus_{i>0}\h^i$.
Let $p$ be a prime number. 
Choose a positive multiple $n$ of $p$ so that $n\h^\pos(G,A)=0$ and $nA_\tors=0$.
We have an exact sequence 
$$\cdots\to \h^i(G,A_\tors)\to \h^i(G,A)\to \h^i(G,A/A_\tors)\to\cdots.$$
Multiplication by $n^2$ is zero on $\h^\pos (G,A/A_\tors)$, so $\h^\pos (G,A/A_\tors)\hookrightarrow \h^\pos(G,A/n^2A+A_\tors)$.
We have exact sequences
$$0\to A/A_\tors\stackrel{\times n^2}\longrightarrow A\to A/n^2A\to 0$$ and
$$0\to A/n^2A+A_\tors\stackrel{\times n^2}\longrightarrow A/n^4A\to A/n^2A\to 0.$$
Consider the diagram
\[\xymatrix{
\ar[r]\ar[d]\h^{2i}(G,A)&\ar[r]^{\partial_1}\ar@{=}[d] \h^{2i}(G,A/n^2A)&\ar@{^{(}->}[d] \h^{2i+1}(G,A/A_\tors) \\
\ar[r]\h^{2i}(G,A/n^4A)&\ar[r]^{\partial_2}\h^{2i}(G,A/n^2A) &\h^{2i+1}(G,A/n^2A\rlap{$+A_\tors)$}
}\]
If $x\in \h^{2j}(G,A/n^2A)$, put $i=jn^2$. The image $n^2x^{n^2-1}\partial_2(x)$ 
in $\h^{2i+1}(G,A/n^2A+A_\tors)$ of $x^{n^2}$ vanishes, hence $\partial_1(x^{n^2})$ vanishes in $\h^{2i+1}(G,A/A_\tors)$,
and $x^{n^2}$ lifts to $\h^{2i}(G,A)$. 
As $\h^\even(G,A/n^2A)\to \h^\even(G,A/pA)$ is power-surjective by Lemma \ref{power:Lem}, we conclude that for every
homogeneous $y\in \h^\even(G,A/pA)$ some power lifts all the way to $\h^\even (G,A)$.
We want to show more, namely that $\h^\even(G,A)\to \h^\even(G,A/pA)$ is universally
power-surjective.
By Proposition \ref{co t:Prop} we need to show that the power of $y$ may be taken of the form
$y^{p^r}$. Localize with respect to the multiplicative system $S=(1+p\Z)$
in $\Z$. The $p$-primary torsion is not affected and all the other torsion disappears, 
so $n$ may be taken a power of $p$. The proofs then produce that some
$y^{p^r}$ lifts to $\h^\even (G,S^{-1}A)$. Now just remove the denominator, which acts trivially on $y$.
\end{proof}
We now restrict to the case $\k=\Z$. 
(More generally, one could take for $\k$ a noetherian ring so that for every prime
number $p$ the ring $\k/p\k$ is essentially of finite type over a field.) 
\begin{Pro}\label{bounded fg:Pro}
Suppose $\k=\Z$.
If $\h^*(G,A)$ has bounded torsion, then $\h^*(G, A)$ is a finitely generated algebra.
\end{Pro}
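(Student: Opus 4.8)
The plan is to exploit the bounded-torsion hypothesis to collapse the positive-degree cohomology to a module over $\Z/N$ for a \emph{single} integer $N$, and then to glue the field case (Touz\'e's Theorem~\ref{(CFG)}) together with the power-surjectivity modulo $p$ that is already available. First I would fix $N$: by Theorem~\ref{generic g G:theorem} there is $n>0$ with $\h^i(G,A)\otimes\Z[1/n]=\h^i(G,A[1/n])=0$ for all $i>0$, so $\h^{>0}(G,A):=\bigoplus_{i>0}\h^i(G,A)$ is annihilated by powers of $n$ and hence lies in the torsion subgroup of $\h^*(G,A)$; the hypothesis then provides $N>0$ with $N\cdot\h^{>0}(G,A)=0$. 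Since $\h^0(G,A)=A^G$ is a finitely generated $\Z$-algebra (Theorems~\ref{Mum:theorem} and \ref{FFT:theorem}), it remains to handle $R:=\h^*(G,A)$ as a graded-commutative $\Z$-algebra with $R^0$ noetherian and $R^{>0}$ a $\Z/N$-module. Writing $R^{>0}=\bigoplus_{p\mid N}R^{>0}_{(p)}$ for the primary decomposition, the product of elements of distinct primary parts vanishes, so each $R\langle p\rangle:=R^0\oplus R^{>0}_{(p)}$ is a graded subalgebra of $R$, and $R$ is generated as a $\Z$-algebra by the finitely many $R\langle p\rangle$. Thus it suffices to show that each $R\langle p\rangle$ is a finitely generated $\Z$-algebra.

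Fix a prime $p\mid N$ and let $p^e$ be the exact power of $p$ dividing $N$, so $R\langle p\rangle^{>0}=R^{>0}_{(p)}$ is killed by $p^e$. Here I would invoke the elementary principle that a graded ring $S$ with $S^0$ a finitely generated $\Z$-algebra is itself finitely generated as a $\Z$-algebra as soon as $S^{>0}$ is a finitely generated ideal (lift homogeneous ideal generators and induct on degree). Since $p^e R\langle p\rangle=p^eR^0$ sits in degree $0$ while $R^{>0}_{(p)}$ sits in positive degrees, $R^{>0}_{(p)}$ embeds as a submodule of $R\langle p\rangle/p^eR\langle p\rangle$; and by d\'evissage along the filtration $R\langle p\rangle\supseteq pR\langle p\rangle\supseteq\cdots\supseteq p^eR\langle p\rangle$, whose successive quotients are cyclic modules over $R\langle p\rangle/pR\langle p\rangle$, the ring $R\langle p\rangle/p^eR\langle p\rangle$ is module-finite over $R\langle p\rangle/pR\langle p\rangle$. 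So the whole problem reduces to showing that $R\langle p\rangle/pR\langle p\rangle$ is noetherian.

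To see this I would compare with $A/pA$. The Bockstein long exact sequence of $0\to A\xrightarrow{p}A\to A/pA\to 0$ identifies the kernel of $\h^i(G,A)\to\h^i(G,A/pA)$ with $p\h^i(G,A)$, so the kernel of $R\to\h^*(G,A/pA)$ is $pR$; intersecting with $R\langle p\rangle$ yields $pR\langle p\rangle$, and since the primary parts $R^{>0}_{(q)}$ with $q\neq p$ map to zero in the $\F_p$-module $\h^*(G,A/pA)$, the image of $R\langle p\rangle$ coincides with the image of all of $R$. Hence $R\langle p\rangle/pR\langle p\rangle$ is isomorphic to the image of $\h^*(G,A)$ in $\h^*(G,A/pA)$. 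Now the preceding Proposition (power-surjectivity of $\h^\even(G,A)\to\h^\even(G,A/pA)$) shows that $\h^\even(G,A/pA)$ is integral over the image of $\h^\even(G,A)$; combined with Touz\'e's Theorem~\ref{(CFG)}, which makes $\h^*(G,A/pA)$ a finitely generated $\F_p$-algebra and module-finite over its subalgebra $\h^\even(G,A/pA)$, this gives that $\h^*(G,A/pA)$ is module-finite over the image of $\h^*(G,A)$, whence by the Artin--Tate lemma that image is a finitely generated $\F_p$-algebra, in particular noetherian. This establishes the noetherianity of $R\langle p\rangle/pR\langle p\rangle$ and closes the argument.

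The main obstacle is the last step: the bootstrap from Touz\'e's theorem over $\F_p$ to a noetherianity statement modulo $p^e$, which is exactly where the Benson--Habegger circle of ideas enters. It is here that the bounded-torsion hypothesis is genuinely indispensable, and it is used twice — once to force $\h^{>0}(G,A)$ to be a $\Z/N$-module, and again, inside the preceding Proposition, to produce the power-surjectivity of $\h^\even(G,A)\to\h^\even(G,A/pA)$ in the first place. Everything else (primary decomposition, d\'evissage along powers of $p$, Artin--Tate) is routine commutative algebra.
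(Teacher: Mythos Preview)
Your overall architecture is sound and close in spirit to the paper's: reduce to one prime at a time, use the preceding power-surjectivity proposition to get hold of $\h^\even(G,A/pA)$ via the (CFG) Theorem, and then run a Nakayama/d\'evissage argument. But there is a genuine gap in the last step.

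You invoke the short exact sequence $0\to A\xrightarrow{p}A\to A/pA\to 0$ to identify the kernel of $\h^*(G,A)\to\h^*(G,A/pA)$ with $p\h^*(G,A)$. This sequence is exact only when $A$ has no $p$-torsion. In general the kernel of $\h^i(G,A)\to\h^i(G,A/pA)$ is the image of $\h^i(G,pA)\to\h^i(G,A)$, and since $pA\cong A/A[p]$ this image can be strictly larger than $p\h^i(G,A)$; already in degree zero one has $A^G\cap pA=(pA)^G$, which need not equal $pA^G$ when $A[p]\neq0$. So your identification of $R\langle p\rangle/pR\langle p\rangle$ with the image of $R$ in $\h^*(G,A/pA)$ breaks down, and all you get from Artin--Tate is that a \emph{further} quotient of $R\langle p\rangle/pR\langle p\rangle$ is noetherian, which does not suffice for the d\'evissage.

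This is exactly the obstacle that forces the paper to split the argument into two halves. One first treats $A/A_\tors$, which is torsion-free so that the multiplication-by-$p$ sequence is short exact and the Nakayama step (the analogue of your $R\langle p\rangle/pR\langle p\rangle$ computation) goes through; this yields that $\h^{>0}(G,A/A_\tors)$ is a noetherian module over a finitely generated subalgebra $C\subseteq\h^*(G,A)$. Then one handles $\h^{>0}(G,A_\tors)$ by filtering $A_\tors\supseteq pA_\tors\supseteq\cdots\supseteq0$ and applying the (CFG) Theorem to each $\F_p$-linear layer. Your argument would be correct for torsion-free $A$, and can be repaired along these same lines, but as written it does not cover the general case.
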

\begin{proof}
By Theorem \ref{generic g G:theorem} we may choose a prime number $p$ and concentrate on the $p$-primary part.
Say by tensoring $\Z$ and $A$ with $\Z_{(p)}$. So now $\h^\pos(G,A)$ is $p$-primary torsion and $A$ is a $\Z_{(p)}$-algebra.
We know that $\h^\even(G,A)\to \h^\even(G,A/pA)$ is power-surjective.
By power surjectivity and
the (CFG) Theorem \ref{(CFG)}, we choose an $A^G$-subalgebra $C$ 
of $\h^*(G,A)$, generated by finitely many
homogeneous elements, so that $C\to \h^*(G,A/pA)$ is noetherian. 
Again by the (CFG) Theorem \ref{(CFG)} it follows that
$\h^*(G,A/pA)\to \h^*(G,A/pA+A_\tors)$ is noetherian, so that
$C\to \h^*(G,A/pA+A_\tors)$ is also noetherian.
\par
Let $N$ be the image of $\h^\pos(G,A/A_\tors)$ in $\h^\pos(G,A/pA+A_\tors))$. 
As a $C$-module, it is isomorphic to $\h^\pos(G,A/A_\tors)/p\h^\pos(G,A/A_\tors)$. 
Choose homogeneous $v_i\in \h^\pos(G,A/A_\tors)$ so that their images generate $N$.
Say $V$ is the $C$-span of the $v_i$.
We have $\h^\pos(G,A/A_\tors)+V\subseteq p\h^\pos(G,A/A_\tors)+V$. Iterating this we get
$\h^\pos(G,A/A_\tors)+V\subseteq p^r\h^\pos(G,A/A_\tors)+V$ for any $r>0$.
But $\h^*(G,A)$ and $\h^*(G,A_\tors)$
have bounded torsion, so $\h^*(G,A/A_\tors)$ also has bounded torsion. It follows that
$\h^\pos(G,A/A_\tors)=V$. We conclude that $\h^\pos(G,A/A_\tors)$ is a noetherian $C$-module.
\par
Now let us look at $\h^\pos(G,A_\tors)$. Filter $A_\tors\supseteq pA_\tors\supseteq p^2A_\tors\supseteq\cdots \supseteq0$.
By the (CFG) theorem $\h^\pos(G,p^iA_\tors/p^{i+1}A_\tors)$ is a noetherian $\h^*(G,A/pA)$-module, hence a noetherian $C$-module.
So $\h^\pos(G,A_\tors)$ is also a noetherian $C$-module and thus $\h^*(G,A)$ is one. It follows that $\h^*(G,A)$ is a finitely generated
$A^G$-algebra. And $A^G$ itself is finitely generated by Theorem \ref{FFT:theorem}.
\end{proof}
\begin{Pro}Let $\k=\Z$.
Then $\h^*(G,\gr A)$ is a finitely generated algebra.
\end{Pro}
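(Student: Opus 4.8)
The plan is to reduce the assertion about $\h^*(G,\gr A)$ to the already-proved Provisional CFG bullet $\h^*(G,\gr A)$ is a finitely generated algebra over $\k=\Z$, by showing that $\gr A$ satisfies the hypotheses that make that bullet applicable; concretely, one wants to invoke Proposition~\ref{bounded fg:Pro} with $\gr A$ in place of $A$. So the first step is to check that $\gr A$ is a finitely generated $\Z$-algebra on which $G$ acts rationally through algebra automorphisms: finite generation is exactly Theorem~\ref{hull and finite generation:theorem} (the equivalence of (i) and (iii)), given that $A$ itself is finitely generated by standing assumption. The $G$-action on $\gr A$ is induced from that on $A$, and it is rational and by algebra automorphisms because the Grosshans filtration is a filtration by $G$-submodules and is multiplicative.

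The key remaining point is therefore to verify that $\h^*(G,\gr A)$ has bounded torsion, so that Proposition~\ref{bounded fg:Pro} applies. This is precisely where Theorem~\ref{torsion:theorem} does the work: it produces a positive integer $n$ with $n\,\hull_\nabla(\gr A)\subseteq \gr A$, and it already records the consequence that $\h^i(G,\gr A)$ is annihilated by $n$ for all $i>0$. Hence $\h^{\pos}(G,\gr A)$ is killed by $n$, so $\h^*(G,\gr A)$ certainly has bounded torsion (its torsion subgroup is contained in the $n$-torsion, together with the torsion of $\h^0(G,\gr A)=(\gr A)^G$, which is a finitely generated $\Z$-module by Theorem~\ref{FFT:theorem} and thus has bounded torsion as well). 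Feeding this into Proposition~\ref{bounded fg:Pro} with $A$ replaced by $\gr A$ gives that $\h^*(G,\gr A)$ is a finitely generated $\Z$-algebra, which is the claim.

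I do not expect a serious obstacle here: the statement is essentially a corollary assembled from Theorem~\ref{torsion:theorem}, Theorem~\ref{hull and finite generation:theorem}, and Proposition~\ref{bounded fg:Pro}. The only mild subtlety is the bookkeeping needed to apply Proposition~\ref{bounded fg:Pro} to $\gr A$ rather than to $A$: one should make sure that ``bounded torsion of $\h^*$'' really is the sole hypothesis of that proposition and that it is insensitive to whether the ambient algebra is $\Z$-graded by the Grosshans height (it is, since the proposition only uses ring and module structures, not the grading). One could alternatively bypass Proposition~\ref{bounded fg:Pro} and argue directly: choose $b_1,\dots,b_s$ so that $\psi_{b_1}\ot\cdots\ot\psi_{b_s}$ surjects onto $\hull_\nabla(\gr A)$, note $\gr A$ is finite over the finitely generated subalgebra they generate, use acyclicity of $\hull_\nabla(\gr A)\ot\k[G/U]$ from Proposition~\ref{coh crit:Pro} together with the $n$-torsion bound, and invoke the (CFG) Theorem~\ref{(CFG)} mod $p$ for the finitely many relevant primes $p$ dividing $n$; but routing through Proposition~\ref{bounded fg:Pro} is cleaner and reuses the Benson--Habegger-style argument already in place.
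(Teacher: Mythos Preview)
Your approach is exactly the paper's: use Theorem~\ref{torsion:theorem} to see that $\h^{\pos}(G,\gr A)$ is annihilated by some $n$, hence $\h^*(G,\gr A)$ has bounded torsion, and then apply Proposition~\ref{bounded fg:Pro} to $\gr A$ (finitely generated by Theorem~\ref{hull and finite generation:theorem}). One small slip: $(\gr A)^G$ is a finitely generated $\Z$-\emph{algebra} by Theorem~\ref{FFT:theorem}, not a finitely generated $\Z$-module; but as a Noetherian ring it is a Noetherian module over itself and thus still has bounded torsion by the lemma opening Section~\ref{finiteness:section}, so your conclusion stands.
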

\begin{proof}
By Theorem \ref{torsion:theorem} the algebra $\h^*(G,\gr A)$ has bounded torsion, so Proposition \ref{bounded fg:Pro} applies.
\end{proof}
\begin{Pro}\label{Hm is noetherian:Pro}
Let $\k=\Z$.
Then $\h^m(G,M)$ is a noetherian $A^G$-module.
\end{Pro}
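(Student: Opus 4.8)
The plan is to reduce to the Grosshans graded situation, where the finite generation statements already established apply, and then to lift the conclusion back up the Grosshans filtration of $M$. First I would replace the pair $(A,M)$ by the square-zero extension $B=A\ltimes M$ used in the proof of Theorem~\ref{module fg:theorem}: its underlying $G$-module is $A\oplus M$ with product $(a_1,m_1)(a_2,m_2)=(a_1a_2,a_1m_2+a_2m_1)$. Since $M$ is Noetherian over $A$, the algebra $B$ is a finitely generated commutative $\Z$-algebra with rational $G$-action, $M$ is a square-zero ideal, and $\h^*(G,B)=\h^*(G,A)\oplus\h^*(G,M)$ with $\h^*(G,M)$ a square-zero ideal. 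Forming Grosshans filtrations weight by weight gives $B_{\le i}=A_{\le i}\oplus M_{\le i}$, so $\gr B=\gr A\ltimes\gr M$, again a square-zero extension. By Theorem~\ref{hull and finite generation:theorem} the algebra $\gr B$ is finitely generated, by Theorem~\ref{torsion:theorem} the cohomology $\h^i(G,\gr B)$ is killed by a fixed integer for $i>0$, and hence, exactly as in the proof that $\h^*(G,\gr A)$ is finitely generated, Proposition~\ref{bounded fg:Pro} shows $\h^*(G,\gr B)=\h^*(G,\gr A)\oplus\h^*(G,\gr M)$ is a finitely generated $\Z$-algebra. In particular $\h^*(G,\gr M)$, an ideal in a Noetherian ring, is a Noetherian $\h^*(G,\gr A)$-module.

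Next I would extract from this two facts about $\gr M$. Since $(\gr_iA)^G=0$ for $i>0$, one has $\h^0(G,\gr A)=(\gr A)^G=A^G$, so $\h^*(G,\gr A)$ is a finitely generated $A^G$-algebra graded by cohomological degree and each $\h^i(G,\gr A)$ is a finitely generated $A^G$-module. Now $\h^*(G,\gr M)$ is bigraded by cohomological degree $i$ and Grosshans degree $j$; choosing finitely many bihomogeneous $\h^*(G,\gr A)$-module generators $\xi_a\in\h^{n_a}(G,\gr_{l_a}M)$ and fixing $i$, one gets $\bigoplus_j\h^i(G,\gr_jM)=\sum_{n_a\le i}\h^{i-n_a}(G,\gr A)\,\xi_a$, a finite sum of finitely generated $A^G$-modules. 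Thus, for each fixed $i$, every $\h^i(G,\gr_jM)$ is finitely generated over $A^G$ and $\h^i(G,\gr_jM)=0$ for $j$ large.

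Finally I would climb the Grosshans filtration $0\subseteq M_{\le0}\subseteq M_{\le1}\subseteq\cdots$ of $M$. Since rational cohomology commutes with this filtered colimit, $\h^m(G,M)=\varinjlim_j\h^m(G,M_{\le j})$. From the long exact sequences of $0\to M_{\le j-1}\to M_{\le j}\to\gr_jM\to0$, choosing $J$ with $\h^{m-1}(G,\gr_jM)=\h^m(G,\gr_jM)=0$ for all $j>J$ forces the transition maps $\h^m(G,M_{\le j})\to\h^m(G,M_{\le j+1})$ to be isomorphisms for $j\ge J$, so $\h^m(G,M)=\h^m(G,M_{\le J})$. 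One then proves $\h^m(G,M_{\le j})$ is a finitely generated $A^G$-module by induction on $j$: the base case $M_{\le0}=M^G$ follows from Theorem~\ref{module fg:theorem} together with the universal coefficient theorem and the finite generation of $\h^*(G,\Z)$ over $\Z$ \cite[B.6]{J}; in the inductive step the long exact sequence presents $\h^m(G,M_{\le j})$ as an extension of a submodule of $\h^m(G,\gr_jM)$ by a quotient of $\h^m(G,M_{\le j-1})$, both finitely generated over the Noetherian ring $A^G$ (Theorem~\ref{FFT:theorem}). As $A^G$ is Noetherian, ``finitely generated over $A^G$'' and ``Noetherian $A^G$-module'' coincide, which finishes the proof.

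The main obstacle is the middle of this chain. On one side, the finite generation of $\h^*(G,\gr(A\ltimes M))$ secretly bundles together Mumford's conjecture, Theorem~\ref{hull and finite generation:theorem}, Theorem~\ref{torsion:theorem}, Proposition~\ref{bounded fg:Pro} and hence Touz\'e's theorem over fields. On the other, one needs the bigraded bookkeeping that converts ``Noetherian over $\h^*(G,\gr A)$'' into the two concrete statements above — in particular the vanishing $\h^i(G,\gr_jM)=0$ for $j\gg0$, which is precisely what makes the colimit defining $\h^m(G,M)$ stabilize after finitely many steps.
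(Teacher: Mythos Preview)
Your argument is correct and rests on exactly the same ingredients as the paper's: the semi-direct product $A\ltimes M$, the finite generation of $\h^*(G,\gr(A\ltimes M))$ via Theorem~\ref{torsion:theorem} and Proposition~\ref{bounded fg:Pro}, and the observation $\h^0(G,\gr A)=A^G$. The paper simply organizes this more economically: it reduces at once to the case $M=A$ (so there is no need to carry $\gr M$ separately or do your bigraded bookkeeping), and then invokes the spectral sequence $E_1^{ij}=\h^{i+j}(G,\gr_{-i}A)\Rightarrow\h^{i+j}(G,A)$, noting that for fixed $t$ the noetherian $A^G$-module $\bigoplus_i\h^t(G,\gr_iA)$ forces only finitely many nonzero $E_1^{i,t-i}$ --- which is precisely your stabilization and induction on $j$ packaged in one line.
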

\begin{proof}
Form the `semi-direct product ring' $A\ltimes M$ whose underlying $G$-module
is $A\oplus M$, with product given by
$(a_1,m_1)(a_2,m_2)=(a_1a_2,a_1m_2+a_2m_1)$.
It suffices to show that $\h^m(G,A\ltimes M)$ is a noetherian $\h^0(G,A\ltimes M)$-module.
In other words, we may forget $M$ and just ask if $\h^m(G,A)$ is a noetherian $A^G$-module.
Now $\h^*(G,\gr A)$ is a finitely generated algebra and $\h^0(G,\gr A)=\gr_0 A$ , so in the spectral sequence 
$$E(A):E_1^{ij}=\h^{i+j}(G,\gr_{-i}A)\Rightarrow \h^{i+j}(G, A)$$
the $\bigoplus_{i+j=t}E_1^{ij}$ are noetherian $A^G$-modules for each $t$.
So for fixed $t$ there are only finitely many nonzero $E_1^{i,t-i}$ and the result follows.
\end{proof}
\begin{Pro}\label{noeth cfg:Pro}Let $\k=\Z$.
If $\h^\even(G,A)\to \h^\even(G,A/pA)$
is noetherian for every prime number $p$, then $\h^*(G, A)$ is a finitely generated algebra.
\end{Pro}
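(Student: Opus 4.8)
The plan is to deduce this from Proposition \ref{bounded fg:Pro}: it is enough to prove that $\h^*(G,A)$ has bounded torsion, since that proposition then yields finite generation. By Theorem \ref{generic g G:theorem} there is a positive integer $n$ with $\h^\pos(G,A[1/n])=0$, so $\h^\pos(G,A)$ is torsion, supported at the finitely many primes dividing $n$; hence it suffices to bound, for each such prime $p$, the $p$-primary part $\h^\pos(G,A)\otimes\Z_{(p)}=\h^\pos(G,A\otimes\Z_{(p)})$. As in the proof of Proposition \ref{bounded fg:Pro} I would tensor everything with $\Z_{(p)}$; then I would split off the $\Z$-torsion $A_\tors$, which is a finitely generated $A$-module and so killed by a fixed power of $p$. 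The long exact sequence of $0\to A_\tors\to A\to A/A_\tors\to 0$ shows $\h^*(G,A_\tors)$ has bounded torsion, and a short dévissage, using the (CFG) Theorem \ref{(CFG)} over $\F_p$ on the finitely generated $A/pA$-module $A_\tors/(A_\tors\cap pA)$, shows that the noetherian hypothesis passes from $A$ to $A/A_\tors$. Thus I may assume $\k=\Z_{(p)}$, that $A$ is $\Z_{(p)}$-flat, and that $\h^\pos(G,A)$ is $p$-primary torsion; the goal becomes: $\h^\pos(G,A)$ is killed by a power of $p$.

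The heart is a Bockstein argument. Since $A$ is $\Z_{(p)}$-flat, $0\to A\xrightarrow{p}A\to A/pA\to 0$ is exact, and its long exact cohomology sequence shows that the reduction $\h^*(G,A)/p\,\h^*(G,A)\to\h^*(G,A/pA)$ is an injective homomorphism of graded rings (its kernel in degree $i$ is exactly $p\,\h^i(G,A)$). Now $\h^*(G,A/pA)$ is a finitely generated $\F_p$-algebra by the (CFG) Theorem \ref{(CFG)}, hence module-finite over a finitely generated subalgebra of $\h^\even(G,A/pA)$, which by hypothesis is a noetherian $\h^\even(G,A)$-module; combining, $\h^*(G,A/pA)$ is a noetherian $\h^\even(G,A)$-module. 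The positive-degree part $\h^\pos(G,A)/p\,\h^\pos(G,A)$ maps isomorphically onto the image of the ideal $\h^\pos(G,A)$, an $\h^\even(G,A)$-submodule of $\h^*(G,A/pA)$, so it is a finitely generated $\h^\even(G,A)$-module.

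Pick homogeneous $v_1,\dots,v_r\in\h^\pos(G,A)$ lifting generators. Then $\h^\pos(G,A)=\sum_j\h^\even(G,A)v_j+p\,\h^\pos(G,A)$, and iterating, $\h^\pos(G,A)=\sum_j\h^\even(G,A)v_j+p^m\,\h^\pos(G,A)$ for every $m\ge1$. Here I would invoke Proposition \ref{Hm is noetherian:Pro}: each $\h^t(G,A)$ is a noetherian module over the noetherian ring $A^G$ (this persists over $\Z_{(p)}$, or just localize the statement over $\Z$), so for $t\ge1$, being $p$-primary torsion, it is killed by some power $p^{m_t}$. Taking $m=m_t$ in degree $t$ forces $\h^t(G,A)=\bigl(\sum_j\h^\even(G,A)v_j\bigr)^t$ for all $t\ge1$; hence $\h^\pos(G,A)=\sum_j\h^\even(G,A)v_j$ is generated over $\h^\even(G,A)$ by the finitely many $v_j$, each of which is $p$-power torsion, so a single power of $p$ annihilates $\h^\pos(G,A)$. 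Unwinding the reductions gives bounded torsion for the original $\h^*(G,A)$, and Proposition \ref{bounded fg:Pro} finishes.

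The main obstacle, I expect, is not a single deep ingredient but precisely the passage from ``$\h^\pos(G,A)$ is torsion'' to ``$\h^\pos(G,A)$ is \emph{boundedly} torsion'': a module that is finitely generated modulo $p$ over a ring that is not known to be noetherian need not itself be finitely generated, so one genuinely needs the extra input that each $\h^t(G,A)$ is separately noetherian over $A^G$ (Proposition \ref{Hm is noetherian:Pro}) to make the $p$-adic iteration terminate degree by degree. The surrounding bookkeeping — the localization at $p$, splitting off $A_\tors$, and transferring the noetherian hypothesis to $A/A_\tors$ — is routine but should be written with care.
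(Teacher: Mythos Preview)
Your argument is correct and rests on exactly the same two pillars as the paper's: the degree-by-degree noetherianity of Proposition~\ref{Hm is noetherian:Pro}, which lets the Nakayama-type iteration $\h^m+V\subseteq p\h^m+V$ terminate one degree at a time, together with the (CFG) Theorem over $\F_p$ to handle the torsion layer. The only organisational difference is that you first package the outcome as ``$\h^*(G,A)$ has bounded torsion'' and then invoke Proposition~\ref{bounded fg:Pro} as a black box, whereas the paper simply re-runs the proof of Proposition~\ref{bounded fg:Pro} with the degree-by-degree modification in place; this makes your route slightly longer (the Nakayama step is effectively performed twice) but entirely equivalent. One small wording fix: the bounded torsion of $\h^*(G,A_\tors)$ does not come from the long exact sequence but directly from the fact that $A_\tors$ itself is annihilated by a power of $p$; the long exact sequence is what you use at the very end to reassemble bounded torsion for $A$ from that of $A_\tors$ and $A/A_\tors$.
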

\begin{proof}
We argue as in the proof of Proposition \ref{bounded fg:Pro}. 
We may no longer know that $\h^*(G, A)$ has bounded torsion, but for every $m>0$ we know that $\h^m(G,A/A_\tors)$ is a noetherian 
$A^G$-module, hence has bounded torsion. Instead of
$\h^\pos(G,A/A_\tors)+V\subseteq p\h^\pos(G,A/A_\tors)+V$, we use $\h^m(G,A/A_\tors)+V\subseteq p\h^m(G,A/A_\tors)+V$.
We find that $\h^m(G,A/A_\tors)\subseteq V$ for all $m>0$ and thus $\h^\pos(G,A/A_\tors)= V$ again.
Finish as before.
\end{proof}
\begin{Cor}\label{pow noeth cfg:Pro}Let $\k=\Z$.
If $\h^\even(G,A)\to \h^\even(G,A/pA)$
is power-surjective for every prime number $p$, then $\h^*(G, A)$ is a finitely generated algebra.
\end{Cor}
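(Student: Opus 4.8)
The plan is to deduce this from Proposition~\ref{noeth cfg:Pro}: it suffices to upgrade, for each prime $p$ separately, the hypothesis that $\h^\even(G,A)\to \h^\even(G,A/pA)$ is power-surjective to the statement that $\h^\even(G,A)\to \h^\even(G,A/pA)$ is \emph{noetherian}, and then invoke that proposition.

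Fix a prime $p$. First I would record, via the (CFG) Theorem~\ref{(CFG)} applied over the field $\F_p$, that $\h^*(G,A/pA)$ is a finitely generated $\F_p$-algebra. Since every odd-degree homogeneous element $x$ satisfies $x^2\in\h^\even(G,A/pA)$, the graded-commutative ring $\h^*(G,A/pA)$ is generated as an algebra over its even part by finitely many homogeneous elements, each of which is integral over that even part; hence $\h^*(G,A/pA)$ is a finite module over $\h^\even(G,A/pA)$, and the Artin--Tate lemma shows that $\h^\even(G,A/pA)$ is itself a finitely generated $\F_p$-algebra. Pick homogeneous algebra generators $w_1,\dots,w_k$ of $\h^\even(G,A/pA)$.

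Next, let $C$ denote the image of $\h^\even(G,A)$ in $\h^\even(G,A/pA)$. Power-surjectivity provides, for each $i$, an integer $n_i$ with $w_i^{n_i}\in C$, so each $w_i$ is a root of the monic polynomial $t^{n_i}-w_i^{n_i}\in C[t]$, and therefore $\h^\even(G,A/pA)=C[w_1,\dots,w_k]$ is a finite $C$-module. By the Artin--Tate lemma once more, $C$ is a finitely generated $\F_p$-algebra, hence noetherian; consequently $\h^\even(G,A/pA)$ is a noetherian $C$-module, and a fortiori a noetherian module over $\h^\even(G,A)$, because the $\h^\even(G,A)$-action factors through the surjection $\h^\even(G,A)\twoheadrightarrow C$. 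Thus $\h^\even(G,A)\to\h^\even(G,A/pA)$ is noetherian for every prime $p$, and Proposition~\ref{noeth cfg:Pro} then yields that $\h^*(G,A)$ is a finitely generated algebra.

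I do not expect a genuine obstacle here beyond bookkeeping: the only subtle point is the passage between the graded-commutative algebra $\h^*(G,A/pA)$ and its commutative even subalgebra, which is dealt with by the elementary observation that odd classes square into even degree, combined with two applications of the Artin--Tate lemma. Everything substantive is imported, namely the (CFG) theorem over fields (Theorem~\ref{(CFG)}) and Proposition~\ref{noeth cfg:Pro} (which itself rests on Theorem~\ref{generic g G:theorem} and on the finite generation of $A^G$ from Theorem~\ref{FFT:theorem}).
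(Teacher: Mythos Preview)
Your argument is correct and matches the paper's intended route: the paper states the result as an unproved corollary of Proposition~\ref{noeth cfg:Pro}, relying on exactly the passage you spell out, namely that power-surjectivity together with the (CFG) Theorem~\ref{(CFG)} over $\F_p$ forces $\h^\even(G,A)\to\h^\even(G,A/pA)$ to be noetherian (the same step already appears inside the proof of Proposition~\ref{bounded fg:Pro}). Your explicit use of Artin--Tate to pin down the even subalgebra is a reasonable way to make this precise.
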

\begin{Pro}Let $\k=\Z$.
If $\h^*(G,A)$ is a finitely generated algebra, then $\h^*(G,M)$ is a noetherian $\h^*(G,A)$-module.
\end{Pro}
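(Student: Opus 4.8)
The plan is to compare $M$ with its Grosshans graded, as in the proof that $\h^m(G,M)$ is a noetherian $A^G$-module (Proposition~\ref{Hm is noetherian:Pro}), but now keeping all cohomological degrees together. Form the two spectral sequences
\begin{gather*}
E(M):\ E_1^{ij}=\h^{i+j}(G,\gr_{-i}M)\Rightarrow \h^{i+j}(G,M),\\
E(A):\ E_1^{ij}=\h^{i+j}(G,\gr_{-i}A)\Rightarrow \h^{i+j}(G,A),
\end{gather*}
the first being a spectral sequence of modules over the second. The $E_1$-term of $E(A)$ is the bigraded algebra $\h^*(G,\gr A)$, which is finitely generated (established above) and whose cohomological degree $0$ part is $(\gr A)^G=A^G$, sitting in Grosshans degree $0$; hence it has a bihomogeneous algebra generating set all of whose cohomological degree $0$ members have Grosshans degree $0$, so for each fixed cohomological degree $n$ the group $\h^n(G,\gr_j A)$ vanishes for $j$ large, and, once we know $\h^*(G,\gr M)$ to be a finitely generated $\h^*(G,\gr A)$-module, so does $\h^n(G,\gr_j M)$. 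Then both spectral sequence filtrations are finite in each cohomological degree, $E_\infty(M)$ is a noetherian module over the noetherian ring $E_\infty(A)$, and the standard comparison of a filtered module with its associated graded gives that $\h^*(G,M)$ is a noetherian $\h^*(G,A)$-module. So it suffices to prove that $\h^*(G,\gr M)$ is a noetherian $\h^*(G,\gr A)$-module.

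Since $M$ is noetherian over $A$, $\gr M$ is a noetherian $\gr A$-module, and $\gr A$ is a finitely generated $\k$-algebra (Theorem~\ref{hull and finite generation:theorem} via Lemma~\ref{hull fg gr fg:Lem}); hence $\h^0(G,\gr M)=(\gr M)^G$ is a noetherian $(\gr A)^G$-module by Hilbert's fourteenth for modules (Theorem~\ref{module fg:theorem}), and in particular a noetherian $\h^*(G,\gr A)$-module. For the higher cohomology, embed $\gr M\hookrightarrow\hull_\nabla(\gr M)$ by Lemma~\ref{embed:Lem}. As $\hull_\nabla(\gr M)$ has good Grosshans filtration, $\hull_\nabla(\gr M)\ot\k[G/U]$ is acyclic (Proposition~\ref{coh crit:Pro}), and $\hull_\nabla(\gr M)$, being a $G$-module direct summand of it, is acyclic as well. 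So the long exact sequence of $0\to\gr M\to\hull_\nabla(\gr M)\to Q\to0$, with $Q=\hull_\nabla(\gr M)/\gr M$, gives $\h^i(G,\gr M)\cong\h^{i-1}(G,Q)$ for $i\geq2$ and exhibits $\h^1(G,\gr M)$ as a quotient of $\h^0(G,Q)$, all as $\h^*(G,\gr A)$-modules; thus $\h^*(G,\gr M)$ will be a noetherian $\h^*(G,\gr A)$-module as soon as $\h^*(G,Q)$ is.

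To control $\h^*(G,Q)$, observe first that $M^{U^+}\cong M^U=(M\ot\k[G/U])^G$ (transfer principle, \cite[Ch. Two]{G}) is a noetherian $A^U\cong A^{U^+}$-module, by Theorem~\ref{module fg:theorem} applied to the action of $G$ on the finitely generated algebra $A\ot\k[G/U]$ and its noetherian module $M\ot\k[G/U]$ (compare Lemma~\ref{U inv fg:Lem}). Inducing up, and arguing as in Lemmas~\ref{fg multicone:Lem} and~\ref{fg hull:Lem}, $\hull_\nabla(\gr M)=\ind_B^GM^{U^+}$ is a noetherian module over $\hull_\nabla(\gr A)=\ind_B^GA^{U^+}$, hence over $\gr A$, since $\hull_\nabla(\gr A)$ is module-finite over $\gr A$ by Theorem~\ref{hull and powers:theorem}; thus $Q$ is a noetherian $\gr A$-module. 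Moreover $Q$ is annihilated by some positive integer $n$: over $\Q$ Weyl and Schur modules coincide, so $\gr M\ot\Q=\hull_\nabla(\gr M)\ot\Q$, whence $Q$ is torsion as an abelian group, and a noetherian module over the noetherian ring $\gr A$ that is torsion has bounded torsion. Now filter $Q$ by the submodules $p^kQ$ for the finitely many primes $p$ dividing $n$; each subquotient $p^kQ/p^{k+1}Q$ is a noetherian module over the finitely generated $\F_p$-algebra $\gr A/p\gr A$, so by CFG over a field for modules (Theorem~\ref{(CFG)}) its cohomology is a noetherian $\h^*(G,\gr A/p\gr A)$-module, hence a noetherian $\h^*(G,\gr A)$-module because $\h^*(G,\gr A/p\gr A)$ is module-finite over $\h^*(G,\gr A)$ (long exact sequence of $0\to\gr A\xrightarrow{p}\gr A\to\gr A/p\gr A\to0$). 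Adding the finitely many contributions, $\h^*(G,Q)$ is a noetherian $\h^*(G,\gr A)$-module, and with the previous paragraph this proves the claim, and hence the proposition.

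The main obstacle I anticipate is the one genuinely new ingredient above: that $\hull_\nabla(\gr M)=\ind_B^GM^{U^+}$ is module-finite over $\hull_\nabla(\gr A)=\ind_B^GA^{U^+}$ — equivalently, the module versions of Lemmas~\ref{fg hull:Lem} and~\ref{torsion:Lem}, yielding the integer $n$ with $nQ=0$. Everything else is an assembly of results already in the paper.
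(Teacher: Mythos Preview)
Your reduction to showing that $\h^*(G,\gr M)$ is noetherian over $\h^*(G,\gr A)$ is carried out carefully, and the obstacle you flag---module-finiteness of $\hull_\nabla(\gr M)$ over $\hull_\nabla(\gr A)$---does go through essentially as you sketch (use the weight decomposition and surjectivity of $\nabla_\mu\ot\nabla_\nu\to\nabla_{\mu+\nu}$). But the first paragraph, where you pass from $E_1$ back to the abutment, has a genuine gap. You assert that ``$E_\infty(M)$ is a noetherian module over the noetherian ring $E_\infty(A)$'', and neither clause is justified. Knowing that $E_1(A)$ is a noetherian ring and that $E_1(M)$ is a noetherian $E_1(A)$-module does not propagate to later pages: $E_{r+1}(A)=\ker(d_r)/\im(d_r)$ is a quotient of the \emph{subring} $\ker(d_r)\subseteq E_r(A)$, and subrings of noetherian rings need not be noetherian. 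Your vanishing argument correctly shows the spectral sequence stabilizes in each bidegree, so each $E_\infty^{ij}$ is an $A^G$-subquotient of $E_1^{ij}$, but $A^G$ alone is far too small to give noetherianness of the whole bigraded object. The standard fix---finding a large algebra of permanent cycles in $E_1(A)$ over which $E_1(M)$ is already noetherian---is unavailable here: there is no ring map between $A$ and $\gr A$, hence no map from $\h^*(G,A)$ into the pages of $E(A)$, and the generators of $\h^*(G,\gr A)$ have no reason to be permanent cycles.

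The paper sidesteps this entirely and never uses the Grosshans spectral sequence for this proposition. It exploits the hypothesis directly: finite generation of $\h^*(G,A)$ gives bounded torsion, hence power-surjectivity of $\h^\even(G,A)\to\h^\even(G,A/pA)$; combined with CFG over $\F_p$, this makes $\h^*(G,A)\to\h^*(G,A/pA)$ noetherian, and then a Nakayama-type argument (using that each $\h^m(G,M/M_\tors)$ is a noetherian $A^G$-module, from Proposition~\ref{Hm is noetherian:Pro}) handles $M/M_\tors$; the torsion part $M_\tors$ is filtered by powers of $p$ and handled by CFG over $\F_p$ again. A smaller point in your argument: the sequence $0\to\gr A\xrightarrow{p}\gr A\to\gr A/p\gr A\to0$ is only exact on the left when $\gr A$ has no $p$-torsion, which is not given; to get module-finiteness of $\h^*(G,\gr A/p\gr A)$ over $\h^*(G,\gr A)$ you should instead invoke the bounded-torsion/power-surjectivity results already proved for $\gr A$.
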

\begin{proof}
Let $\h^*(G,A)$ be a finitely generated algebra. So it has bounded torsion
and $\h^\even(G,A)\to \h^\even(G,A/pA)$ is power-surjective for every prime number $p$.
We argue again as in the proof of Proposition \ref{bounded fg:Pro}.
\par
By Theorem \ref{generic g G:theorem}, applied to $A\ltimes M$, we may choose a prime number $p$ and concentrate on the $p$-primary part, so $\h^\pos(G,M)$ is $p$-primary torsion and $A$ is a $\Z_{(p)}$-algebra.
Write $C=\h^*(G,A)$. 
By power surjectivity and the (CFG) Theorem \ref{(CFG)}, $C\to \h^*(G,A/pA)$ is noetherian. Again by the (CFG) Theorem \ref{(CFG)} it follows that
$\h^*(G,M/pM+M_\tors)$ is a noetherian $\h^*(G,A/pA)$-module, hence a noetherian $C$-module.
\par
Let $N$ be the image of $\h^\pos(G,M/M_\tors)$ in $\h^\pos(G,M/pM+M_\tors))$. 
As a $C$-module, it is isomorphic to $\h^\pos(G,M/M_\tors)/p\h^\pos(G,M/M_\tors)$. 
Choose homogeneous $v_i\in \h^\pos(G,M/M_\tors)$ so that their images generate $N$. 
Say $V$ is the $C$-span of the $v_i$.
We have $\h^m(G,M/M_\tors)+V\subseteq p\h^m(G,M/M_\tors)+V$ for $m>0$. Iterating this we get
$\h^m(G,M/M_\tors)+V\subseteq p^r\h^m(G,M/M_\tors)+V$ for any $r>0$, $m>0$.
But $\h^m(G,M/M_\tors)$ is a noetherian $A^G$-module, hence has
bounded torsion. It follows that $\h^m(G,M/M_\tors)\subseteq V$ for all $m>0$, and
$\h^\pos(G,M/M_\tors)=V$. So $\h^\pos(G,M/M_\tors)$ is a noetherian $C$-module.
\par 
Now let us look at $\h^\pos(G,M_\tors)$. Filter $M_\tors\supseteq pM_\tors\supseteq p^2M_\tors\supseteq\cdots \supseteq0$.
By the (CFG) theorem $\h^\pos(G,p^iM_\tors/p^{i+1}M_\tors)$ is a noetherian $\h^*(G,A/pA)$-module, hence a noetherian $C$-module.
So $\h^\pos(G,M_\tors)$ is also a noetherian $C$-module and thus $\h^*(G,M)$ is one. 
\end{proof}

\begin{comment}
In particular, to prove (CFG) it suffices to consider the case of a symmetric algebra $S^*(M)$
for some finitely generated representation $M$.
If you wish, you could use Bob Thomason's result to have the representation projective as $\k$-module. 
\end{comment}
Theorem \ref{bounded power fg:theorem} has been proven.



\Addresses

\begin{thebibliography}{3333} \normalsize
\bibitem{A}Henning Haahr Andersen, 
{\em Cohomology of line bundles on $G/B$.}
Annales scientifiques de l'\'Ecole Normale Sup\'erieure S\'er. 4, 12, no. 1 (1979), 85--100.

\bibitem{AK}Henning Haahr Andersen and Upendra Kulkarni, 
{\em Sum formulas for reductive algebraic groups.}
Adv.\ Math.\ 217, no. 1 (2008), 419--447.

\bibitem{BH}David J.\ Benson and Nathan Habegger, 
{\em Varieties for modules and a problem of Steenrod.}
Journal of Pure and Applied Algebra 44 (1987), 13--34.

\bibitem{B}Nicolas Bourbaki, 
\'El\'ements de math\'ematique.
Alg\`ebre. Chapitre 10. Alg\`ebre homologique.
Masson, Paris, 1980. vii+216 pp.

\bibitem{CLO}David Cox, John Little, and Donal O'Shea,
Ideals, Varieties and Algorithms.
Undergraduate texts in Mathematics, Springer-Verlag, New York, 1992

\bibitem{CPSvdK} Edward Cline, Brian Parshall, Leonard Scott, and Wilberd van der Kallen,
{\em Rational and generic cohomology.}
Invent. Math. 39 (1977), 143--163.

\bibitem{DG} Michel Demazure and Pierre Gabriel, 
Groupes alg\'ebriques. Tome I: G\'eom\'etrie alg\'ebrique, g\'en\'eralit\'es, groupes commutatifs.
Masson \& Cie, \'Editeur, Paris; North-Holland Publishing Co., Amsterdam, 1970. xxvi+700 pp. 

\bibitem{FS} Eric Friedlander and Andrei Suslin, 
{\em Cohomology of finite group scheme over a field.}
Invent. Math {127} (1997) 209--270.

\bibitem{G} Frank D. Grosshans,
Algebraic Homogeneous Spaces and Invariant Theory.
Lecture Notes in Mathematics, 1673. Springer-Verlag,
Berlin, 1997.

\bibitem{Grosshans contr}
Frank D. Grosshans,
{\em Contractions of the actions of reductive algebraic groups in arbitrary characteristic.}
Invent.\ Math.\ 107 (1992), 127--133.

\bibitem[EGA II]{EGA II}Alexandre Grothendieck,
\'El\'ements de g\'eom\'etrie alg\'ebrique (r\'edig\'es avec la collaboration de Jean Dieudonn\'e) : 
II. \'Etude globale \'el\'ementaire de quelques classes de morphismes. 
Publications Math\'ematiques de l'IH\'ES 8 (1961), 5--222. 

\bibitem[EGA III]{EGA III}Alexandre Grothendieck,
\'El\'ements de g\'eom\'etrie alg\'ebrique (r\'edig\'es avec la collaboration de Jean Dieudonn\'e) :
III. \'Etude cohomologique des faisceaux coh\'erents, Premi\`ere partie.
Publications Math\'ematiques de l'IH\'ES 11 (1961), 5--167. 
%

\bibitem{J} Jens Carsten Jantzen,
Representations of Algebraic Groups.
Second edition. Mathematical Surveys and Monographs, 107. American Mathematical Society, Providence, RI, 2003.

\bibitem{vdK book} Wilberd van der Kallen, 
Lectures on Frobenius splittings and $B$-modules.
Notes by S. P. Inamdar. Tata Institute of Fundamental Research, Bombay, and Springer-Verlag, Berlin, 1993.

\bibitem{vdK coh} Wilberd van der Kallen,
{\em Cohomology with Grosshans graded coefficients.}
In: Invariant Theory in All Characteristics. Edited by H. E. A. Eddy Campbell and David L. Wehlau.
CRM Proceedings and Lecture Notes, Volume 35 (2004), 127--138. Amer. Math. Soc., Providence, RI.

\bibitem{vdK reductive} Wilberd van der Kallen,
{\em Reductive group with finitely generated cohomology algebras.}
In: Algebraic Groups and Homogeneous Spaces, Mumbai 2004. 
Edited by Vikram B. Mehta.
Tata Inst. Fund. Res. Stud. Math. (2007), 301--314. Tata Inst. Fund. Res., Mumbai. 

\bibitem{Mathieu compos 69} Olivier Mathieu, 
{\em Construction d'un groupe de Kac--Moody et applications.}
Compos.\ Math.\ 69 (1989), 37--60.

\bibitem{Mathieu G}Olivier Mathieu, 
{\em Filtrations of $G$-modules.}
Ann.\ Sci.\ \'Ecole Norm.\ Sup.\ 23 (1990), 625--644.

\bibitem[GIT]{M} David Mumford, 
Geometric Invariant Theory.
Ergebnisse der Mathematik und ihrer Grenzgebiete, Neue Folge, Band 34.
Springer-Verlag, Berlin-New York 1965 vi+145 pp. 

\bibitem{N} Nagata Masayoshi,
{\em On the 14th problem of Hilbert.}
Am.\ J. Math.\ 81 (1959), 766--772.

\bibitem{noether}Emmy Noether, 
{\em Der Endlichkeitssatz der Invarianten endlicher linearer Gruppen der Charakteristik p.}
Nachr. Ges. Wiss. G\"ottingen (1926) 28--35

\bibitem{Se} Conjeevaram S. Seshadri,
{\em Geometric reductivity over arbitrary base.}
Advances in Math. 26, no. 3, (1977) 225--274

\bibitem{Sp} Tonny Albert Springer,
Invariant Theory.
Lecture Notes in Mathematics, Vol. 585. Springer-Verlag, Berlin-New York, 1977

\bibitem{T} Robert W. Thomason,
{\em Equivariant resolution, linearization, and Hilbert's fourteenth problem over arbitrary base schemes.}
Adv. in Math. 65, no. 1 (1987), 16--34.

\bibitem{TvdK} Antoine Touz\'e and Wilberd van der Kallen,
{\em Bifunctor cohomology and Cohomological finite generation for reductive groups},
 Duke Math. J. 151, no. 2 (2010), 251--278. 
\end{thebibliography}
\end{document}